\newtheorem{prop}{Proposition}[section]
\newtheorem{thm}[prop]{Theorem}
\newtheorem{cor}[prop]{Corollary}
\newtheorem{lem}[prop]{Lemma}
\theoremstyle{definition}
\newtheorem{defn}[prop]{Definition}
\newtheorem{expl}[prop]{Example}
\newtheorem{rem}[prop]{\it Remark}
\newtheorem{ass}[prop]{Assumption}
\newtheorem*{claim*}{Claim}
\newcommand{\bP}{\mathbb{P}}
\newcommand{\bC}{\mathbb{C}}
\newcommand{\bR}{\mathbb{R}}
\newcommand{\bQ}{\mathbb{Q}}
\newcommand{\bZ}{\mathbb{Z}}
\newcommand{\tX}{\widetilde{X}}
\newcommand{\tY}{\widetilde{Y}}
\newcommand{\tf}{\widetilde{f}}
\newcommand{\tF}{\widetilde{F}}
\newcommand{\tD}{\widetilde{D}}
\newcommand{\cX}{\mathcal{X}}
\newcommand{\cO}{\mathcal{O}}
\newcommand{\cI}{\mathcal{I}}
\newcommand{\cJ}{\mathcal{J}}
\newcommand{\Supp}{\mathrm{Supp}~}
\newcommand{\mult}{\mathrm{mult}}
\newcommand{\Ex}{\mathrm{Ex}}
\newcommand{\Pic}{\mathrm{Pic}}
\newcommand{\vol}{\mathrm{vol}}
\begin{document}

\title{Fano varieties with large Seshadri constants in positive characteristic}
\author{Ziquan Zhuang}
\email{zzhuang@math.princeton.edu}
\address{Department of Mathematics, Princeton University, Princeton, NJ, 08544-1000.}
\date{}

\maketitle

\begin{abstract}
We prove that a Fano variety (with arbitrary singularities) of dimension $n$ in positive characteristic is isomorphic to $\mathbb{P}^n$ if the Seshadri constant of the anti-canonical divisor at some smooth point is greater than $n$ and classify Fano varieties whose anti-canonical divisors have Seshadri constants $n$. In characteristic $p>5$ and dimension $3$, we also show that Fano varieties $X$ with Seshadri constants $\epsilon(-K_X,x)>2+\epsilon$ at some smooth point $x\in X$ (for some fixed $\epsilon>0$) have bounded anti-canonical degrees.
\end{abstract}

\section{Introduction}

Let $X$ be a normal projective variety and $L$ an ample $\bQ$-Cartier divisor on $X$. The Seshadri constants of $L$, originally introduced by Demailly \cite{demailly}, serve as a measure of the local positivity of the divisor $L$.

\begin{defn}
Let $L$ be an ample $\bQ$-Cartier divisor on a projective variety $X$ and $x\in X$ a smooth point. The \textit{Seshadri constant} of $L$ at $x$ is defined as
\[
  \epsilon(L,x):=\sup\{t\in\bR_{>0}\mid \sigma^*L-tE\textrm{ is ample}\},
\]
where $\sigma:\mathrm{Bl}_x X\to X$ is the blow-up of $X$ at $x$, and $E$ is the exceptional divisor of $\sigma$.
\end{defn}

When $X$ is Fano, i.e. $-K_X$ is $\bQ$-Cartier and ample, it is natural to look at the Seshadri constants of the anti-canonical divisor. It has been observed that over a field of characteristic zero, this local invariant, when it's large, also governs the global geometry of the Fano variety. More precisely, Fano varieties with large Seshadri constants enjoy nice geometric properties and satisfy certain boundedness:

\begin{thm} \label{thm:char 0} \cite{bs,lz16,me}
Let $X$ be a complex Fano variety of dimension $n$ and let $x\in X$ be a smooth point. Let $\epsilon>0$ be a constant.
    \begin{enumerate}
        \item If $\epsilon(-K_X,x)>n$, then $X\cong\bP^n$;
        \item If $\epsilon(-K_X,x)\ge n$, then $X$ has klt singularities;
        \item If $\epsilon(-K_X,x)>n-1$, then $X$ is rationally connected;
        \item The set of those $X$ with $\epsilon(-K_X,x)>n-1+\epsilon$ for some $x\in X$ is weakly bounded;
        \item The set of those $X$ with $\epsilon(-K_X,x)\ge n-1$ for some $x\in X$ is birationally bounded.
    \end{enumerate}
Moreover, all the assumptions on the lower bound of Seshadri constants here are sharp.
\end{thm}

Here a set of Fano varieties is said to be weakly bounded if there exists a constant $M>0$ such that for any Fano variety $X$ in the set we have $\vol(-K_X)=((-K_X)^n)<M$. It is said to be birationally bounded if there exists a family of varieties $\pi:\cX\rightarrow B$ over a base of finite type such that any $X$ in the set is birational to $\cX_b=\pi^{-1}(b)$ for some $b\in B$.

The proof of this theorem relies on various consequences of the Kawamata-Viehweg vanishing theorem (e.g. basepoint-free theorem, minimal model program and Koll\'ar-Shokurov connectedness), so does not extend to positive characteristic. On the other hand, we believe that the same statements also hold over a field of characteristic $p>0$ (possibly excluding a few small primes $p$) and indeed, using Frobenius technique, Murayama \cite[Theorem B]{takumi} recently showed that if $X$ is a smooth Fano variety (over an algebraically closed field of any characteristic) of dimension $n$ and $\epsilon(-K_X,x)\ge n+1$ for some $x\in X$ then $X\cong\bP^n$, giving a partial generalization of the first statement in the above theorem.

Unless otherwise specified, all varieties in what follows are defined over an algebraically closed field $k$ of characteristic $p>0$. The goal of this paper is to provide an argument that generalizes parts of Theorem \ref{thm:char 0} to positive characteristic. Here is our first main result:

\begin{thm} \label{thm:Pn}
Let $X$ be a normal projective variety of dimension $n$ and $\Delta$ an effective $\bQ$-divisor on $X$ such that $L=-(K_{X}+\Delta)$ is $\mathbb{Q}$-Cartier and ample. Let $x\in X$ be a smooth point.  
\begin{enumerate}
    \item If $\epsilon(L,x)>n$, then $X\cong\mathbb{P}^{n}$;
    \item If $\epsilon(L,x)\ge n$ and $p>2$, then $X$ is globally $F$-regular \emph{(}see \S \ref{sec:F-sing} for definition\emph{)}.
\end{enumerate}
\end{thm}

By standard reduction mod $p$ technique and the main result of \cite{ss}, we immediately obtain a different proof of its characteristic zero analog:

\begin{cor}
Let $X$ be a normal projective variety of dimension
$n$ over $\bC$ and $\Delta$ an effective $\bQ$-divisor on $X$ such that $L=-(K_{X}+\Delta)$
is $\mathbb{Q}$-Cartier and ample. Assume that $\epsilon(L,x)\ge n$
for some smooth point $x\in X$, then $X$ is of Fano type. If in addition $\epsilon(L,x)>n$, then $X\cong\mathbb{P}^{n}$.
\end{cor}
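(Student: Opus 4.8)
The plan is to deduce the corollary from Theorem~\ref{thm:Pn} by the standard reduction modulo $p$, and then to transport the conclusion back to characteristic zero via Mori's characterization of projective space \cite{mori}.

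First I would choose a finitely generated $\bZ$-subalgebra $A\subset\bC$ and a model of all the data over $S:=\Spec A$: a flat projective morphism $\mathcal{X}\to S$ with $\mathcal{X}\times_S\Spec\bC\cong X$, an effective $\bQ$-divisor $\mathcal{D}$ on $\mathcal{X}$ restricting to $\Delta$, and a section $\mathfrak{s}\colon S\to\mathcal{X}$ through $x$. Replacing $S$ by a dense open subset finitely many times, the usual spreading-out results let me assume in addition that the fibres of $\mathcal{X}\to S$ are geometrically normal, that $\mathfrak{s}$ factors through the smooth locus of $\mathcal{X}\to S$, and that $\mathcal{L}:=-(K_{\mathcal{X}/S}+\mathcal{D})$ is $\bQ$-Cartier and relatively ample. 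The Seshadri hypothesis is handled via the elementary observation that $\epsilon(L,x)>n$ is \emph{equivalent} to ampleness of $\sigma^*L-nE$ on $\mathrm{Bl}_x X$: if $\sigma^*L-tE$ is ample for some $t>n$ then $\sigma^*L-nE=\tfrac{n}{t}(\sigma^*L-tE)+(1-\tfrac{n}{t})\sigma^*L$ is a positive combination of an ample and a nef class, hence ample, and the converse follows from the openness of the ample cone. Since ampleness is insensitive to the field extension $\bC/\mathrm{Frac}(A)$ (and blow-up at a point commutes with this flat base change), after one more shrinking of $S$ I may assume that $\sigma^*\mathcal{L}-n\mathcal{E}$ is relatively ample over $S$, where $\sigma\colon\mathrm{Bl}_{\mathfrak{s}}\mathcal{X}\to\mathcal{X}$ and $\mathcal{E}$ is its exceptional divisor. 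At this point every geometric fibre $(X_{\bar s},\Delta_{\bar s},x_{\bar s})$ of the family satisfies all the hypotheses of Theorem~\ref{thm:Pn}.

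Since $A$ is a finitely generated $\bZ$-algebra, its closed points are dense in $S$ and have finite residue fields, hence residue characteristic $p>0$. Picking such a point $s$ and passing to the geometric fibre, Theorem~\ref{thm:Pn} gives $X_{\bar s}\cong\bP^n$; in particular $X_{\bar s}$ is smooth with ample tangent bundle. Now both properties ``the fibre over $s$ is geometrically smooth'' and, over the relative smooth locus of the family where $T_{\mathcal{X}/S}$ is locally free, ``$T_{\mathcal{X}/S}$ restricts to an ample bundle on the fibre over $s$'' define open subsets of $S$; these subsets are nonempty, hence contain the generic point of the irreducible scheme $S$. Shrinking $S$ a final time, I may therefore assume that $\mathcal{X}\to S$ is smooth and that $T_{\mathcal{X}/S}$ is ample on every geometric fibre. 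Base-changing the generic fibre to $\bC$ then exhibits $X$ as a smooth projective variety over $\bC$ with ample tangent bundle, and Mori's theorem \cite{mori} gives $X\cong\bP^n$.

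The only step that requires genuine care is the spreading out: one must verify that each of the relevant properties ($\bQ$-Cartierness of $K_{\mathcal{X}/S}+\mathcal{D}$, normality and smoothness of the fibres, effectivity of $\mathcal{D}$ on fibres, and relative ampleness of $\sigma^*\mathcal{L}-n\mathcal{E}$ and of the tangent bundle) holds on a dense open subset of $S$, which amounts to a sequence of standard openness statements once the Seshadri condition has been rewritten as ampleness of a single $\bQ$-divisor as above. Alternatively, Mori's theorem can be avoided altogether: $\bP^n$ admits no nontrivial deformations, so in the smooth family $\mathcal{X}\to S$ the locus $\{\,s\in S:X_{\bar s}\cong\bP^n\,\}$ is open, hence contains the generic point, and one concludes directly.
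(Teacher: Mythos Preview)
Your proposal is correct and follows exactly the approach the paper indicates: the paper does not give a detailed proof but simply says the corollary follows from Theorem~\ref{thm:Pn} ``by standard reduction mod $p$ technique, combining with Mori's characterization of projective space,'' and you have filled in precisely those details. Your rewriting of the Seshadri hypothesis as ampleness of $\sigma^*L-nE$ (to make it spread out cleanly) and your alternative endgame via rigidity of $\bP^n$ are both standard and fine.
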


The argument we introduce here has the additional bonus that it generalizes \cite[Theorem 3]{lz16}  (which classifies complex Fano varieties $X$ with $\epsilon(-K_X,x)=n$) to positive characteristic as well.

\begin{thm} \label{thm:=n}
Let $X$ be a normal projective variety of dimension $n$ and $\Delta$ an effective $\bQ$-divisor on $X$ such that $L=-(K_{X}+\Delta)$ is $\mathbb{Q}$-Cartier and ample. Assume that $\epsilon(L,x)=n$ for some smooth point $x\in X$ and that either $(L^n)>n^n$ or $\Delta\neq0$. Then either  $X\cong\bP^n$ or $X$ is one of the following:
\begin{enumerate}
\item a degree $d+1$ weighted hypersurface $(x_0x_{n+1}=f(x_1,\cdots,x_n))\subset\mathbb{P}(1^{n+1},d)$;
\item the blow-up of $\bP^n$ along a hypersurface of a hyperplane;
\item a Gorenstein log Del Pezzo surface of degree $\ge 5$.
\end{enumerate}
\end{thm}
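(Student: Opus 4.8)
The plan is to push the argument behind Theorem~\ref{thm:Pn} one notch further, along the lines of its characteristic-zero prototype \cite[Theorem~3]{lz16}: use the Seshadri hypothesis to manufacture an auxiliary boundary forcing a centre of sharp $F$-purity through $x$, and then split on the dimension of the minimal such centre. The characteristic-zero inputs --- Kawamata--Viehweg vanishing, multiplier ideals, Kawamata subadjunction, connectedness of the non-klt locus --- are replaced throughout by their Frobenius counterparts: sharply $F$-pure pairs and their centres of $F$-purity, $F$-adjunction and $F$-inversion of adjunction, the inequality $\mathrm{fpt}\le\mathrm{lct}$, and the Frobenius--Seshadri estimates underlying Theorem~\ref{thm:Pn}; bend-and-break is already available in characteristic $p$.

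\textbf{Step 1: forcing a centre through $x$.} Nefness of $\sigma^*L-nE$ on $\mathrm{Bl}_xX$ gives $(L^n)\ge n^n$. If $(L^n)>n^n$, comparing $h^0(X,\cO_X(mL))\sim(L^n)m^n/n!$ with the $\sim(sm)^n/n!$ conditions for vanishing to order $sm$ at $x$ yields, for $s$ just below $(L^n)^{1/n}$, an effective $\bQ$-divisor $D\sim_\bQ(1-\delta)L$ with $\mult_xD>n$, and we set $B=D$. If instead $\Delta\ne0$, then $-K_X=L+\Delta$ is ample with $((-K_X)^n)>(L^n)=n^n$, and applying the same count to $-K_X$ produces an effective $\bQ$-divisor $B$ with $\mult_xB>n$ and $-(K_X+B)$ ample. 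Either way we have an effective $\bQ$-divisor $B$ with $\mult_xB>n$ and $-(K_X+B)$ ample; since $x$ is smooth, $\mathrm{fpt}_x(X;B)\le\mathrm{lct}_x(X;B)\le n/\mult_xB<1$, so after rescaling $B$ to its sharp $F$-purity threshold we may assume $(X,B)$ is sharply $F$-pure, $-(K_X+B)$ is still ample, and there is a minimal centre of sharp $F$-purity $Z\ni x$.

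\textbf{Steps 2 and 3: the dichotomy on $\dim Z$.} If $\dim Z=0$, then after a standard tie-breaking perturbation $x$ is an isolated non-strongly-$F$-regular point of the log Fano pair $(X,B)$; feeding this, together with the Frobenius--Seshadri estimate supplied by $\epsilon(L,x)=n$, into the proof of Theorem~\ref{thm:Pn} (Frobenius pushforward plus Serre vanishing gives the surjectivity on sections, hence the separation of jets at $x$ needed to conclude) forces $X\cong\bP^n$. If $\dim Z\ge1$, $F$-adjunction provides an effective $\bQ$-divisor $\Delta_Z$ on the normal variety $Z$ with $(K_X+B)|_Z\sim_\bQ K_Z+\Delta_Z$ and $(Z,\Delta_Z)$ sharply $F$-pure, so $-(K_Z+\Delta_Z)$ is ample and $Z$ is a log Fano variety through the smooth point $x$. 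Now the interplay between nefness of $\sigma^*L-nE$ restricted to the strict transform of $Z$ and the fact that $Z$ is a minimal $F$-pure centre of a divisor of controlled multiplicity at $x$ pins $Z$ down as a very special subvariety --- a line or other subvariety of minimal degree, and when $\dim Z=2$ a del Pezzo surface of large degree. The deformations of $Z$ dominate $X$, and combining this covering family with bend-and-break, the description of $\Delta_Z$, and the shape of the non-$F$-pure locus eliminates every other possibility and identifies $X$ with one of the three listed varieties: the degree $d+1$ weighted hypersurface $(x_0x_{n+1}=f(x_1,\dots,x_n))\subset\bP(1^{n+1},d)$; the blow-up of $\bP^n$ along a hypersurface contained in a hyperplane; or, when $n=2$, a Gorenstein log del Pezzo surface of degree $\ge5$.

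\textbf{Expected main obstacle.} The substance lies in the positive-dimensional case: without Kawamata--Viehweg vanishing one must control the covering family $\{Z\}$ and the restricted boundary $\Delta_Z$ entirely through $F$-splitting and test-ideal computations, and the ensuing case analysis that isolates exactly those three families is long. A secondary technical point is that $\mathrm{fpt}=\mathrm{lct}$ can fail in small characteristic, so the threshold step in Step~1 must use only $\mathrm{fpt}\le\mathrm{lct}$. Finally, the hypothesis ``$(L^n)>n^n$ or $\Delta\ne0$'' is precisely what Step~1 needs to push $\mult_xB$ strictly above $n$; in the excluded case $\Delta=0$, $(L^n)=n^n$ the class $\sigma^*L-nE$ is nef with vanishing top self-intersection, further cone-like examples appear, and the classification is not pursued there.
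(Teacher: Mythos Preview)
Your proposal takes a route that is genuinely different from the paper, and it has concrete gaps.

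The paper never introduces $F$-pure centres, $F$-adjunction, or an auxiliary boundary of high multiplicity at $x$. It works entirely on the blowup $\sigma:Y\to X$ with exceptional divisor $E$ and the divisor $D=\sigma^*L-nE=-(K_Y+\Delta+E)$. When $(L^n)>n^n$, $D$ is nef and big and $(Y,\Delta)$ is globally $F$-regular (Corollary~\ref{cor:=n->F-reg}); the heart of the argument is that $D$ is semiample (Lemmas~\ref{lem:vanishing} and~\ref{lem:basept}), so it defines a birational morphism $g:Y\to Z$ which embeds $E\cong\bP^{n-1}$ into the smooth locus of $Z$. The three output families then come from the classification of pairs $(Z,G)$ with $G\cong\bP^{n-1}$ a divisor in the smooth locus and $-(K_Z+G)$ ample, carried out in \S\ref{sec:contain_Pn} (Lemmas~\ref{lem:rho1} and~\ref{lem:rho2}), together with an analysis showing $Y$ is the blowup of $Z$ along a hypersurface in $G$. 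In the remaining case $\Delta\neq 0$, $(L^n)=n^n$, the paper first shows $\Delta$ is $\bQ$-Cartier via a fairly delicate study of the rational conic bundle defined by $|mD|$, and then replaces $\Delta$ by $(1-c)\Delta$ to reduce to $(L^n)>n^n$.

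Your Step~1 breaks in exactly that case. You assert ``$-K_X=L+\Delta$ is ample with $((-K_X)^n)>(L^n)=n^n$'', but $\Delta$ is only an effective $\bQ$-divisor: it is neither assumed $\bQ$-Cartier nor nef, so $-K_X$ need not be $\bQ$-Cartier and $L+\Delta$ need not be ample. (That $\Delta$ is $\bQ$-Cartier is a genuine conclusion in the paper, not a hypothesis.) Nor can you recover the strict inequality $\mult_xB>n$ by adding $\Delta$ to a divisor of multiplicity close to $n$: after moving $x$ to a very general point where the Seshadri constant is still at least $n$, one has $x\notin\Supp(\Delta)$.

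Your Step~3 is a statement of hope rather than an argument. Sentences like ``the interplay \dots\ pins $Z$ down as a very special subvariety'' and ``combining this covering family with bend-and-break \dots\ eliminates every other possibility'' do not supply a mechanism that produces precisely the three listed families; in positive characteristic the theory of $F$-pure centres does not hand you the normality, adjunction, and connectedness package you are implicitly invoking. The paper avoids all of this: the $\bP^{n-1}$ that drives the classification is not a centre inside $X$ but the exceptional divisor $E$ itself, viewed inside the target $Z$ of the semiample fibration, and the case analysis is the comparatively concrete one of \S\ref{sec:contain_Pn}.
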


Note that the condition on Seshadri constant $\epsilon(L,x)$ $=n$ already implies $(L^n)\ge n^n$. When equality holds, we have (by the above theorem, we may assume $\Delta=0$):

\begin{thm} \label{thm:n^n}
Let $X$ be a normal projective variety of dimension $n$ such that $-K_X$ is $\mathbb{Q}$-Cartier and ample. Assume that $\epsilon(-K_X,x)=n$ for some smooth point $x\in X$, $((-K_X)^n)=n^n$ and $p\neq 2$, then $X$ is one of the following:
\begin{enumerate}
\item a quartic weighted hypersurface $X_4=(x_{n+1}^2+x_n h(x_0,\cdots,x_{n-1})=f(x_0,\cdots,x_{n-1}))$ $(h\neq0)$ or $(x_n x_{n+1}=f(x_0,\cdots,x_{n-1}))\subseteq\mathbb{P}(1^n,2^2)$;
\item the quotient of the quadric $Q_k=(\sum_{i=0}^k x_i^2=0)\subseteq\mathbb{P}^{n+1}\,(2\leq k\leq n+1)$ by an involution $\tau(x_i)=\delta_i x_i\,(\delta_i=\pm1)$ that is fixed point free in codimension $1$ and such that not all the $\delta_i(i=0,\cdots,k)$ are the same;
\item a Gorenstein log Del Pezzo surface of degree $4$.
\end{enumerate}
\end{thm}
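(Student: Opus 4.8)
The plan is to blow up the point $x$ and study the resulting nef divisor of numerical dimension $n-1$. Write $L=-K_X$ and let $\sigma\colon Y=\mathrm{Bl}_xX\to X$ be the blow-up, with exceptional divisor $E\cong\bP^{n-1}$. Since $x$ is a smooth point, $K_Y=\sigma^*K_X+(n-1)E$, so
\[ N:=-(K_Y+E)=\sigma^*L-nE . \]
The hypothesis $\epsilon(L,x)=n$ says precisely that $N$ is nef but not ample, and the hypothesis $((-K_X)^n)=n^n$ gives $(N^n)=(L^n)-n^n=0$; thus $N$ is nef with numerical dimension $<n$. First I would show that $N$ is semiample — this is the one point where the lack of Kawamata--Viehweg vanishing is felt, so one argues with Frobenius pushforwards of $\cO_Y(mN)$ as in the proof of Theorem~\ref{thm:Pn} — and let $\phi\colon Y\to Z$ be the associated morphism onto a normal variety. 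Because $N|_E=\cO_{\bP^{n-1}}(n)$ is ample, $\phi$ contracts no curve inside $E$, so $\phi|_E\colon E\to Z$ is finite; combined with $\dim Z=\nu(N)<n$ this forces $\dim Z=n-1$, so $\phi$ is an equidimensional fibration in curves and $E\to Z$ is finite surjective.

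Next I would identify $\phi$ as a conic bundle for which $E$ is a $2$-section. A general fibre $F$ satisfies $(K_Y+E)\cdot F=0$ and $-K_Y\cdot F=E\cdot F>0$; pushing forward, $\sigma(F)$ is a rational curve through $x$ with $-K_X\cdot\sigma(F)=n\cdot\mathrm{mult}_x\sigma(F)$, which is exactly a curve computing $\epsilon(L,x)=n$. A short deformation-and-degree count (using $p\ne2$ to keep the fibres reduced and the cover below separable) then gives $F\cong\bP^1$ with $E\cdot F=2$, so $\phi|_E\colon E=\bP^{n-1}\to Z$ is finite of degree $2$; in characteristic $\ne2$ such a cover is Galois, hence $Z\cong\bP^{n-1}/\langle\iota\rangle$ for a linear involution $\iota=\mathrm{diag}(\pm1)$ (with at least one eigenvalue of each sign). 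In other words $X$ is a ``generalized cone'': off $x$ it is a conic bundle over $Z$ all of whose fibres pass through the single smooth point $x$, equivalently $\sigma$ contracts the $2$-section $E$ of $\phi$ to $x$.

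It then remains to reconstruct $X$ from this data and match it to the three families. One first pins down $Z$: the quotients of $\bP^{n-1}$ by $\mathrm{diag}(\pm1)$ are the weighted projective space $\bP(1^{n-1},2)$ (the reflection case) or generalized cones over $2$-Veronese quadrics; next one embeds $Y$ in the $\bP^2$-bundle (or, when the bundle splits, the $\bP^1$-bundle) over $Z$ cut out by the relative sections of $N$, whose discriminant is linked to the branch divisor of $E\to Z$. Contracting $E$ and writing the normal forms of the relative conic yields: family (1), the quartic weighted hypersurfaces $(x_{n+1}^2+x_nh=f)$ and $(x_nx_{n+1}=f)$ in $\bP(1^n,2^2)$, when $Z\cong\bP(1^{n-1},2)$; family (2), the quotients $Q_k/\tau$ of a possibly singular quadric, for the cone-type $Z$ and the involution $\tau$ fixed-point free in codimension $1$ — the condition that makes $Q_k\to X$ étale in codimension $1$, so that $((-K_X)^n)=\tfrac12((-K_{Q_k})^n)=n^n$; and family (3), in the remaining case $n=2$, where $Z$ is a curve isomorphic to $\bP^1$ and $Y\to\bP^1$ is a conic bundle with a $2$-section contracted, producing a Gorenstein log Del Pezzo surface of degree $((-K)^2)=4$. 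Conversely one checks directly that every variety on the list is Fano with a smooth point of Seshadri constant $n$ and anticanonical degree $n^n$ (the last by the adjunction computations sketched above).

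The main obstacle is twofold. The first half is the semiampleness of $N$ in positive characteristic without vanishing theorems: this is the technical core and is where the Frobenius methods of the earlier sections, and the hypothesis $p\ne2$ (needed to keep $E\to Z$ and the conic fibres separable), are essential. The second half is the combinatorial reconstruction of the third paragraph — running through every possibility for the involution $\iota$, the projective bundle over $Z$, and the degeneration locus of the conic, and tracking the resulting singularities of $X$, closely enough to be sure that exactly families (1)--(3) occur and nothing more.
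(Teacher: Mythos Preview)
Your overall architecture is right --- blow up, form $N=\sigma^*L-nE$, show semiampleness, get a fibration in curves with $E$ a double section, then reconstruct --- and matches the paper's. But two steps are more delicate than your sketch suggests, and the methods you invoke for them do not quite work.

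For semiampleness you propose Frobenius pushforwards ``as in the proof of Theorem~\ref{thm:Pn}'', but that proof contains no such argument, and the F-regularity machinery of \S\ref{sec:f-reg} is unavailable here: Corollary~\ref{cor:=n->F-reg} requires $(L^n)>n^n$, whereas you have equality, so $Y$ is not yet known to be globally F-regular and Lemmas~\ref{lem:vanishing}, \ref{lem:basept} cannot be invoked. The paper instead uses Riemann--Roch to get $\kappa(N)=\nu(N)=n-1$, Kawamata's nef reduction to produce curves $C_y$ with $(N\cdot C_y)=0$, and a Chow-variety construction (showing the cycle map $U\to Y$ is an isomorphism via an intersection argument on surfaces swept out by these curves) to exhibit the morphism; no F-splitting enters, and $p\neq 2$ is used only to ensure the general fibre is reduced. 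More seriously, you never justify that $\phi$ is an actual \emph{conic bundle}, only that its general fibre is $\bP^1$; embedding $Y$ in a $\bP^2$-bundle requires $Y$ to be Cohen--Macaulay so that $\phi$ is Gorenstein in the sense of Definition~\ref{def:Gor_cb}. The paper obtains this only \emph{after} the fibration is built: it identifies $Z\cong\bP(1^{n-1},2)$ (or the quasi-\'etale case), checks the toric pair $(Z,\tfrac12 M)$ is globally F-regular, and lifts the splitting to $Y$ via a trace-map diagram as in Lemma~\ref{lem:F-reg}; only then is $Y$ Cohen--Macaulay. The final reconstruction is also carried out differently: rather than classifying conic bundles over the singular base $Z$ directly as you propose, the paper base-changes along $E\to Z$ (Lemma~\ref{lem:basechange}) to get an honest conic bundle over $\bP^{n-1}$ and then appeals to the calculation of \cite[Lemma~19]{lz16}.
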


In particular, every Fano variety $X$ with $\epsilon(-K_X,x)=\dim X$ lifts to characteristic zero at least when $p=\mathrm{char}(k)$ is different from 2.

As for the weak boundedness statement of Theorem \ref{thm:char 0}, we're only able to generalize it in the 3-dimensional case:

\begin{thm} \label{thm:weakbdd}
Given $\epsilon>0$ and assume that $\mathrm{char}(k)=p>5$, then the set of Fano threefolds $X$ such that $\epsilon(-K_{X},x)>2+\epsilon$ for some smooth point $x\in X$ is weakly bounded.
\end{thm}

Combining with the main result of \cite{F-seshadri}, we also have the following corollary, which partially generalizes the birational boundedness statement of Theorem \ref{thm:char 0}:

\begin{cor} \label{cor:birbdd}
Given $\epsilon>0$ and assume that $\mathrm{char}(k)=p>5$, then the set of Fano threefolds $X$ such that $\epsilon(-K_{X},x)>2+\epsilon$ for some smooth point $x\in X$ is birationally bounded.
\end{cor}

In view of \cite{me}, it is better to consider Theorem \ref{thm:weakbdd} in the context of moving Seshadri constants (see \S \ref{sec:mov_Seshadri}), which generalize the notion of Seshadri constants to arbitrary (not necessarily ample) divisors. It has the advantage of behaving better under typical operations in the Minimal Model Program (MMP). By a result of Demailly, the moving Seshadri constants and the original Seshadri constants coincide for ample divisors, hence Theorem \ref{thm:weakbdd} can be regarded as a special case of the following more general statement:

\begin{thm} \label{thm:weakbdd_m}
Given $\epsilon>0$ and assume that $\mathrm{char}(k)=p>5$, then there exists a constant $M$ depending only on $\epsilon$ such that if $X$ is a normal projective threefold with $\epsilon_{m}(-K_{X},x)>2+\epsilon$ for some smooth point $x\in X$, then $\vol(-K_X)<M$.
\end{thm}

We now outline the proof of these theorems. Let $\sigma:Y\rightarrow X$ be the blowup of $X$ at $x$ with exceptional divisor $E$ and consider $D=\sigma^*L-\epsilon(L,x)E$ where $L=-(K_X+\Delta)$ as in Theorem \ref{thm:Pn}. In characteristic zero, the proof of Theorem \ref{thm:char 0}(1) in \cite{lz16} goes by analyzing the morphism defined by $|mD|$ $(m\gg0)$. To adapt it to positive characteristic, we need to prove that $\epsilon(L,x)\in\bQ$ and that $D$ is semiample (which are somewhat obvious over $\bC$). Our observation (Corollary \ref{cor:=n->F-reg}) here is that the assumption on Seshadri constant actually implies the global $F$-regularity of the pair $(Y,\Delta)$ and this suffices to conclude the rationality of $\epsilon(L,x)$, which is essentially a consequence of Kodaira vanishing on $Y$ by the argument in \cite[Proposition 1.1]{bs}. On the other hand, the semiample-ness of $D$ is a slightly delicate application of the methods in \cite{bpf} and the key step is given by Lemma \ref{lem:basept} on the base locus of adjoint divisors. Once this is done, Theorem \ref{thm:Pn} almost follows from the same argument in \cite{lz16} while Theorem \ref{thm:=n} (resp. Theorem \ref{thm:n^n}) reduces to the classification in positive characteristic of varieties containing a projective space in the smooth locus (resp. Gorenstein conic bundles in the sense of Definition \ref{def:Gor_cb} containing the projective space as a double section) under certain conditions. These two topics are treated in \S \ref{sec:contain_Pn} and \S \ref{sec:conic_bundle} respectively and they combine to give the proof of the Theorem \ref{thm:Pn}, \ref{thm:=n} and \ref{thm:n^n} in \S \ref{sec:pf-1}.

As for the weak boundedness statement (i.e. Theorem \ref{thm:weakbdd_m}), the corresponding result in characteristic zero \cite[Theorem 3.6]{me} is proved using the Koll\'ar-Shokurov connectedness theorem, whose validity remains open in positive characteristic (even for threefolds). Our proof strategy then, is to come up with several weaker versions of the Koll\'ar-Shokurov connectedness theorem in positive characteristic and, assuming weak boundedness fails, construct a pair that violates one of these. As a preliminary step, by the work of \cite{mmp-hx,mmp-birkar,mmp-bw}, we may run the MMP for threefolds in characteristic $p>5$, and this easily reduces Theorem \ref{thm:weakbdd_m} to the case of Mori fiber spaces (Lemma \ref{lem:reduction}). Depending on the behaviour of the Mori fiber spaces, we have two cases to consider (treated in \S \ref{sec:fiber type} and \S \ref{sec:rho=1} respectively). If the Mori fiber space is of fiber type, we can prove that it is birational to a $\bP^2$-bundle over a curve and it is then relatively straightforward to construct a pair that violates connectedness theorem using an argument similar to \cite{jiang}. The other case is when we have a terminal Fano variety of Picard number one. In this case, our argument again relies heavily on the methods in \cite{bpf} and the upshot is to find some highly singular divisors in the pluri-anticanonical system that cut out a zero dimension subscheme (which can be viewed as an analog of isolated log canonical center in characteristic zero) supported at a very general point. A key tool here is a local version (see Lemma \ref{lem:global generation}) of \cite[Theorem A]{non-nef}, which allows us to construct new singular divisors out of existing ones.

This paper is organized as follows. In \S \ref{sec:prelim} we collect some definitions and useful results on $F$-singularity and moving Seshadri constants. In \S \ref{sec:at least n} we deal with Fano varieties with anticanonical Seshadri constants no smaller than the dimension and prove Theorem \ref{thm:Pn}, \ref{thm:=n} and \ref{thm:n^n}. Finally, \S \ref{sec:weakbdd} is devoted to the proof of Theorem \ref{thm:weakbdd_m}.

\subsection*{Acknowledgement}

The author would like to thank his advisor J\'anos Koll\'ar for constant support, encouragement and numerous inspiring conversations. He also wishes to thank Takumi Murayama for several useful comments on an earlier draft of this paper, Yuchen Liu for helpful discussions, and the anonymous referee for careful reading of the manuscript.

\section{Preliminary} \label{sec:prelim}

\subsection{Notation and conventions}

Unless otherwise specified, all varieties are assumed to be normal and defined over an algebraically closed field $k$ of characteristic $p\ge0$. A \emph{pair} $(X,D)$ consists of a variety $X$ and an effective $\bQ$-divisor $D$ on $X$ such that $K_X+D$ is $\bQ$-Cartier. A dominant morphism $f:X\rightarrow Y$ is called a \emph{fiber type} morphism if it has connected fibers and $0<\dim Y<\dim X$.

\subsection{$F$-singularities} \label{sec:F-sing}

We first recall a few definitions and results on singularities in characteristic $p$.

\begin{defn}
Let $X$ be a normal quasi-projective variety and $\Delta$ an effective $\bQ$-divisor on $X$. Fix a closed point $x\in X$. The pair $(X,\Delta)$ is called \emph{globally $F$-regular} (resp. \emph{globally sharply $F$-split}) if for all effective Weil divisor $D$ on $X$ (resp. for $D=0$), there exists an $e$ such that the composition
\begin{equation}\label{eq:f-reg}
\cO_X\rightarrow F_*^e\cO_X\hookrightarrow F_*^e\cO_X(\left\lceil (p^e-1)\Delta \right\rceil +D)
\end{equation}
splits as a map of $\cO_X$-modules. It is said to be \emph{strongly $F$-regular} (resp. \emph{sharply $F$-pure}) at $x$ if the pair is globally $F$-regular (resp. globally sharply $F$-split) in some affine neighbourhood of $x$.
\end{defn}

Since $X$ is quasi-projective, any effective divisor is contained in the support of some ample divisor, hence in the above definition of global $F$-regularity, it suffices to check splitting of (\ref{eq:f-reg}) when $D$ is Cartier and ample. It is also clear from the definition that if $(X,\Delta)$ is globally $F$-regular and $0\le\Delta'\le\Delta$ then $(X,\Delta')$ is also globally $F$-regular. Moreover, if $H$ is another effective divisor then $(X,\Delta+\epsilon H)$ is also globally $F$-regular for $0<\epsilon\ll 1$ and thus we can perturb the divisor $\Delta$ (preserving global $F$-regularity) so that no coefficient of $\Delta$ has a denominator divisible by $p$. For more background on global $F$-regularity, see \cite{ss}.

\begin{defn}
Let $L$ be an ample $\bQ$-divisor on $X$. Assume that $X$ is strongly $F$-regular, then the \emph{$F$-pure threshold} of the (polarized) pair $(X,L)$ is defined to be the supremum of all $t\ge0$ such that $(X,tD)$ is strongly $F$-regular for all effective $\bQ$-divisor $D\sim_\bQ L$. When $X$ is Fano, we define its $F$-pure threshold, denoted by fpt($X$), as the $F$-pure threshold of $(X,-K_X)$.
\end{defn}

By \cite[Lemma 3.4]{hw}, for any effective divisor $D$ on a normal variety $X$ and for any integer $e>0$, we have an isomorphism of $F_*^e\cO_X$-modules
\[\mathcal{H}om_{\cO_X}(F_*^e(\cO_X(D)),\cO_X)\cong F_*^e(\cO_X((1-p^e)K_X-D)).
\]
Viewing an element of $F_*^e(\cO_X((1-p^e)K_X-D))$ as a map $\theta:F_*^e(\cO_X(D))\rightarrow \cO_X$ and evaluating at $1\in F_*^e\cO_X \subseteq F_*^e(\cO_X(D))$, we obtain the trace map
\[\mathrm{Tr}_{X}^{e}(D):F_*^e(\cO_X((1-p^e)K_X-D))\rightarrow \cO_X.
\]
By abuse of notation, we will often denote $\mathrm{Tr}_{X}^{e}(D)$ simply by $\mathrm{Tr}_{X}^{e}$ or $\mathrm{Tr}^{e}$. It is quite straightforward to see that (\ref{eq:f-reg}) splits if and only if 
\[\mathrm{Tr}_{X}^{e}:F_*^e(\cO_X(\lfloor (1-p^e)(K_X+\Delta)\rfloor-D))\rightarrow \cO_X
\]
induces a surjective map on global sections (see e.g. \cite[Proposition 2.5]{bpf}).

The following criterion turns out to be quite useful when verifying a given pair is globally $F$-regular.

\begin{lem} \label{lem:F-reg}
Let $(X,D=E+\Delta)$ be a pair such that $L=-(K_{X}+D)$ is nef and big, $E$ is a prime divisor contained in the smooth locus of $X$ and $E\not\in\mathrm{Supp}(\Delta)$. Assume that $(E,\Delta|_{E})$ is globally $F$-regular and $L|_{E}$ is ample, then $(X,\Delta)$ is also globally $F$-regular.
\end{lem}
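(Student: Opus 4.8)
The plan is to use the characterization of global F-regularity via surjectivity of the trace map on global sections, together with adjunction along $E$, which lets us pass from the pair $(X, \Delta)$ to the pair $(E, \Delta|_E)$ where the hypothesis already gives us what we need. Concretely, to verify that $(X, \Delta)$ is globally F-regular, by the remark following the definition it suffices to show that for a fixed Cartier ample divisor $H$ there is some $e$ (equivalently, for all $e \gg 0$, by a standard argument) such that the composition in \eqref{eq:tr} with $D = H$, namely
\[
F_{*}^{e}\mathcal{O}_{X}(\lfloor(1-q)(K_{X}+\Delta)\rfloor - H) \hookrightarrow F_{*}^{e}\mathcal{O}_{X}((1-q)K_{X}) \to \mathcal{O}_{X},
\]
is surjective on $H^0$, where $q = p^e$. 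Since $L = -(K_X + E + \Delta)$, we have $(1-q)(K_X+\Delta) = (q-1)L + (q-1)E + (1-q)K_X$ up to the rounding, so modulo bookkeeping with the floor the sheaf on the left is $F^e_*\mathcal O_X((q-1)(L+E) + (1-q)K_X - H)$ — that is, a twist by a divisor which is positive along $E$.

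The key step would be to restrict to $E$. Because $E$ lies in the smooth locus of $X$ and $E \not\subseteq \Supp(\Delta)$, adjunction gives $(K_X + E + \Delta)|_E = K_E + \Delta|_E$, hence $L|_E = -(K_E + \Delta|_E)$, which is ample by assumption. I would set up a commutative diagram relating the trace map for $(X, \Delta)$ twisted by a large multiple of $E$ to the trace map for $(E, \Delta|_E)$ — this is the usual "Fedder-type" or "restriction of the splitting" argument: tensoring the exact sequence $0 \to \mathcal O_X(-E) \to \mathcal O_X \to \mathcal O_E \to 0$ with the relevant line bundles and pushing forward under $F^e$, one gets that surjectivity of the global-sections trace map downstairs on $E$ (which holds since $(E, \Delta|_E)$ is globally F-regular and $L|_E$ is ample) implies surjectivity upstairs, provided the obstruction group $H^1(X, (\text{stuff}) - E)$ vanishes. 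That vanishing should follow from Serre vanishing after taking $e \gg 0$, since the twist $(q-1)(L+E) - H + \cdots$ grows with $q$ while $H$ and $K_X$ stay fixed, and $L$ is nef and big (one may need to use that $L|_E$ ample lets the induction close, and handle the nef-but-not-ample directions on $X$ by absorbing a small ample perturbation, which is harmless for F-regularity).

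I would organize it as: (i) reduce to surjectivity on $H^0$ of the trace map twisted by $-H$; (ii) write the relevant line bundle as $\mathcal O_X(mE + (\text{ample for } m \gg 0))$ using that $L + E$ restricted to $E$ is ample and a Fujita-type or cohomological argument to see $(q-1)(L+E) - H$ is, after adding enough copies of $E$, effective/positive enough; (iii) run the restriction exact sequence, reducing to the same statement on $E$ plus a vanishing $H^1$; (iv) invoke global F-regularity of $(E, \Delta|_E)$ for the base case and Serre vanishing for the $H^1$, taking $e$ large. The main obstacle I expect is step (ii)–(iii): making precise, with the floor functions and the fact that $L$ is only nef and big (not ample) on $X$, that the twist is "ample plus a controlled multiple of $E$" so that the restriction sequence genuinely reduces the problem to $E$; this is where one has to be careful that the perturbation by a small ample divisor, used to upgrade nef-and-big to ample, does not disturb the hypotheses on $E$ (it won't, since $E$ is not in its support and ampleness restricts to ampleness). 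Once the cohomology vanishing is in hand, the splitting transfers and the conclusion follows.
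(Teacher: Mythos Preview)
Your proposal is correct and follows essentially the same route as the paper: set up the commutative square of trace maps relating $X$ and $E$ via adjunction, use global F-regularity of $(E,\Delta|_E)$ to get surjectivity on $H^0$ along $E$, and kill the obstruction $H^1(X,(p^e-1)L - E - H)$ by Serre vanishing for $e\gg 0$. The only organizational difference is that the paper performs the ``nef and big $\Rightarrow$ ample'' perturbation \emph{first}---replacing $D$ by $D' = D + \epsilon(\lambda M - (1-\lambda)E)$ so that $-(K_X+D')$ becomes genuinely ample while $E$ keeps coefficient $1$ and $(E,\Delta'|_E)$ stays globally F-regular---after which the $H^1$ vanishing is immediate from Serre; your step (ii) about writing the twist as ``$mE + (\text{ample})$'' is unnecessary once this reduction is made, and you can drop it.
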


\begin{proof}
We first make a few reductions. Since $L$ is nef and big, there exists an effective divisor $M$ such that $L-\epsilon M$ is ample for all $0<\epsilon\ll1$. As $L|_{E}$ is ample, $(L+\epsilon E)|_{E}$ is also ample for sufficiently small $\epsilon$, hence $L+\epsilon E$ is nef and big for $0\le\epsilon\ll1$ (if $C$ is a curve such that $(L+\epsilon E\cdot C)<0$ then since $L$ is nef we have $C\subseteq E$, but this contradicts the ampleness of $(L+\epsilon E)|_{E}$). Let $a\ge0$ be the coefficient of $E$ in $M$, let $\lambda=\frac{1}{a+1}$ and $D'=D+\epsilon(\lambda M-(1-\lambda)E)$, then $E$ still has coefficient one in $D'$ (i.e. $D'=E+\Delta'$ where $E\not\in\mathrm{Supp}(\Delta')$) and for sufficiently small $\epsilon$, $(E,\Delta'|_{E})$ is still globally $F$-regular. We also have $-(K_{X}+D')=(1-\lambda)(L+\epsilon E)+\lambda(L-\epsilon M)$, hence for $0<\epsilon\ll1$, $-(K_{X}+D')$ is ample. Since $\Delta'\ge\Delta$, we may replace $D$ by $D'$ and assume that $L=-(K_{X}+D)$ is ample in what follows. By perturbing the coefficients of components of $\Delta$, we may also assume that $(p^{e}-1)\Delta$ has integral coefficients for some $e>0$.

Let $H$ be an ample Cartier divisor on $X$ such that $\Delta\cup\mathrm{Sing}(X)\subseteq\mathrm{Supp}(H)\not\subseteq E$. Consider the following commutative diagram 
\[
\xymatrix{
 F_{*}^{e}\mathcal{O}_{X}((1-p^{e})(K_{X}+E+\Delta)-H)\ar[r]^-{i}\ar[d]^{\mathrm{Tr}_{X}^{e}} & F_{*}^{e}\mathcal{O}_{E}((1-p^{e})(K_{E}+\Delta_{E})-H)\ar[d]^{\mathrm{Tr}_{E}^{e}}\\
 \mathcal{O}_{X}\ar[r] & \mathcal{O}_{E}
}
\]
where the two vertical arrows are given by the trace map. By assumption, $\mathrm{Tr}_{E}^{e}$ induces a surjection on global sections. As $L$ is ample, for sufficiently large and divisible $e$ we have $H^{1}(X,F_{*}^{e}\mathcal{O}_{X}((1-p^{e})(K_{X}+E+\Delta)-E-H))=H^{1}(X,(p^{e}-1)L-E-H)=0$, thus by the long exact sequence of cohomology, $i$ also induces a surjection on global sections. It follows that $H^{0}(\mathrm{Tr}_{X}^{e})$ is surjective as well. By \cite[Theorem 3.9]{ss}, this implies that $(X,\Delta)$ is globally $F$-regular.
\end{proof}

\begin{cor} \label{cor:=n->F-reg}
Let $(X,\Delta)$ be a pair such that $L=-(K_{X}+\Delta)$ is ample.
Assume that $(L^{n})>n^{n}$ and $\epsilon(L,x)\ge n$ for some smooth
point $x\in X\backslash\Delta$. Let $Y$ be the blow up of $X$ at
$x$ and let $\Delta_Y$ be the strict transform of $\Delta$ on $Y$. Then $(Y,\Delta_Y)$
is globally $F$-regular.
\end{cor}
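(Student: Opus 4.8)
The strategy is to verify the hypotheses of Lemma \ref{lem:F-reg} for the pair $(Y, E + \Delta)$, where $\sigma \colon Y \to X$ is the blow-up and $E$ its exceptional divisor. Two of the hypotheses are immediate: since $x$ is a smooth point of $X$, the morphism $\sigma$ is the blow-up of a smooth point of an $n$-fold in a neighbourhood of $E$, so $Y$ is smooth along $E \cong \bP^{n-1}$, and in particular $E$ is a prime divisor contained in the smooth locus of $Y$; and since $x \notin \Delta$ the strict transform of $\Delta$ on $Y$ does not meet $E$, so $E \notin \Supp(\Delta)$. It then remains to check that $L' := -(K_Y + E + \Delta)$ is nef and big, that $L'|_E$ is ample, and that $(E, \Delta|_E)$ is globally F-regular.

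First I would compute $L'$. Because $x \notin \Delta$, the pair $(X,\Delta)$ is smooth near $x$, so the discrepancy of $E$ over $(X,\Delta)$ is $n-1$; thus, writing $\Delta$ also for its strict transform on $Y$ as in the statement, $K_Y + \Delta = \sigma^*(K_X+\Delta) + (n-1)E = -\sigma^*L + (n-1)E$, whence $L' = \sigma^* L - nE$. Now $\epsilon(L,x) \ge n$ says precisely that $\sigma^* L - tE$ is ample for every $0 < t < n$ (if $\sigma^* L - tE$ is ample and $0 < t' < t$ then $\sigma^* L - t'E = \tfrac{t'}{t}(\sigma^*L - tE) + (1-\tfrac{t'}{t})\sigma^*L$ is a sum of an ample and a nef class, hence ample); letting $t \nearrow n$ exhibits $L' = \sigma^* L - nE$ as a limit of ample classes, so it is nef. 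For bigness I would expand the top self-intersection: since $(\sigma^* L)|_E \equiv 0$ (a pullback from the point $x$) all mixed terms vanish, and using $(E^n) = \bigl(\cO_Y(E)|_E\bigr)^{n-1} = (-1)^{n-1}$ together with $(\sigma^*L)^n = (L^n)$ one gets $(L'^{\,n}) = ((\sigma^* L - nE)^n) = (L^n) - n^n$, which is positive by hypothesis; a nef divisor of positive top self-intersection is big, so $L'$ is nef and big.

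Finally, $L'|_E = (\sigma^* L)|_E - nE|_E = -n\,\cO_{\bP^{n-1}}(-1) = \cO_{\bP^{n-1}}(n)$ is ample, and $(E, \Delta|_E) = (\bP^{n-1}, 0)$ is globally F-regular because projective space is. Applying Lemma \ref{lem:F-reg} (with $X, D, E, \Delta, L$ there taken to be $Y, E+\Delta, E, \Delta, L'$) then yields that $(Y,\Delta)$ is globally F-regular. There is no substantial obstacle; the two places that call for a little care are the boundary-case nefness of $\sigma^*L - nE$, which is exactly why the hypothesis is $\epsilon(L,x) \ge n$ rather than $>n$, and the bigness, which is what forces the strict inequality $(L^n) > n^n$.
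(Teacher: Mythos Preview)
Your proof is correct and follows exactly the paper's approach: the paper's proof is the single line ``the pair $(Y,E+\Delta)$ satisfies all the assumptions of Lemma~\ref{lem:F-reg}'', and you have spelled out precisely those verifications (smoothness along $E$, $E\notin\Supp\Delta$, the computation $-(K_Y+E+\Delta)=\sigma^*L-nE$ nef and big via $\epsilon(L,x)\ge n$ and $(L^n)>n^n$, ampleness of $L'|_E$, and global F-regularity of $(\bP^{n-1},0)$).
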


\begin{proof}
Let $E$ be the exceptional divisor of the blowup $\sigma:Y\rightarrow X$,
then the pair $(Y,E+\Delta_Y)$ satisfies all the assumptions of the
Lemma \ref{lem:F-reg}.
\end{proof}

\begin{rem}
Note that $\epsilon(L,x)\ge n$ already implies $(L^{n})\ge n^{n}$.
However, the assumption $(L^{n})>n^{n}$ in the above corollary can
not be removed in general (even in the boundary free case, i.e. when $\Delta=0$). For example, consider the pair $(X=\bP^n,H)$ where $H$ is a hyperplane, then clearly $\epsilon(-(K_X+H),x)=n$ for any smooth point $x\in X$, but $H$ is the center of $F$-purity of the pair. As another example, consider the Fermat cubic surface $Y=(x^{3}+y^{3}+z^{3}+w^{3}=0)\subseteq\mathbb{P}^{3}$, then $Y$ is not even globally $F$-split in characteristic 2, but $Y$ is also the blow up of a smooth del Pezzo surface of degree 4 whose anticanonical divisor has Seshadri constant 2 at the point we blow up.
\end{rem}

One of the advantages of global $F$-regularity is that most vanishing results that hold in characteristic zero remain valid. In particular we have (see \cite[Theorem 6.8]{ss} for the dual statement):

\begin{lem} \label{lem:vanishing}
Let $(Y,\Delta)$ be a globally $F$-regular pair and $D$ an effective Weil divisor on $Y$ such that $D-(K_{Y}+\Delta)$ is nef and big. Then $H^{i}(Y,\cO_Y(D))=0$ for all $i>0$.
\end{lem}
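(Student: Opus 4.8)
The plan is to run a Frobenius-splitting argument of Mehta--Ramanathan type: exhibit $H^{i}(Y,\cO_{Y}(D))$ as a direct summand of $H^{i}$ of a sheaf which becomes sufficiently positive after iterating Frobenius, and then conclude by Serre vanishing. (When $D-(K_{Y}+\Delta)$ is ample this is a mild variant of \cite[Theorem~6.8]{ss}; the nef and big case is obtained by a perturbation.) First I would reduce to the case that $L:=D-(K_{Y}+\Delta)$ is ample. Since $L$ is big we may write $L\equiv A_{0}+E_{0}$ with $A_{0}$ an ample $\bQ$-divisor and $E_{0}\geq 0$, so that $L-\epsilon E_{0}\equiv(1-\epsilon)L+\epsilon A_{0}$ is ample for $0<\epsilon\ll 1$; choosing $\epsilon$ small enough that $(Y,\Delta+\epsilon E_{0})$ is still globally F-regular (as in the discussion following the definition) and observing that the conclusion does not involve $\Delta$, we may replace $\Delta$ by $\Delta+\epsilon E_{0}$ and assume $L$ is ample. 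After a further perturbation of $\Delta$ we may assume no coefficient of $\Delta$ has denominator divisible by $p$, and hence fix an infinite set $S$ of positive integers such that, for every $e\in S$, both $(p^{e}-1)\Delta$ is integral and $(p^{e}-1)L$ is Cartier.

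The main input is a uniform form of global F-regularity phrased via the trace map: there exist an ample Cartier divisor $A$, which we may take with $A\geq\Delta$, and an $e_{1}\in S$ such that, writing $q=p^{e}$,
\[
F^{e}_{*}\cO_{Y}\bigl((1-q)(K_{Y}+\Delta)-A\bigr)\longrightarrow\cO_{Y}
\]
splits as a map of $\cO_{Y}$-modules for every $e\in S$ with $e\geq e_{1}$. This follows from the definition applied to the effective divisor $A$ together with a standard iteration of Frobenius --- here one uses that $Y$ is Frobenius split (being globally F-regular) and that $A\geq\Delta$, so as to absorb the ceiling contributions coming from $\Delta$ at each step --- or, alternatively, one may invoke \cite[Theorem~3.9]{ss}. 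Twisting this split surjection by $\cO_{Y}(D)$ and taking reflexive hulls, and using the identity $(1-q)(K_{Y}+\Delta)+qD=(K_{Y}+\Delta)+qL=D+(q-1)L$, it becomes a split surjection
\[
F^{e}_{*}\cO_{Y}\bigl(D+(q-1)L-A\bigr)\longrightarrow\cO_{Y}(D),
\]
in which the divisor $D+(q-1)L-A$ is integral because $e\in S$. Since $F^{e}$ is a finite morphism, $H^{i}\bigl(Y,F^{e}_{*}\cO_{Y}(D+(q-1)L-A)\bigr)=H^{i}\bigl(Y,\cO_{Y}(D+(q-1)L-A)\bigr)$, so for every such $e$ the group $H^{i}(Y,\cO_{Y}(D))$ is a direct summand of the right-hand side.

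To finish, note that for $e\in S$ with $e$ large the divisor $(q-1)L-A$ is an ample Cartier divisor: it is $\bQ$-Cartier and ample for $q\gg 0$ since $L$ is ample and $A$ is fixed, and it is Cartier by the choice of $S$. Hence, taking $q\gg 0$, Serre vanishing gives $H^{i}\bigl(Y,\cO_{Y}(D)\otimes\cO_{Y}((q-1)L-A)\bigr)=H^{i}\bigl(Y,\cO_{Y}(D+(q-1)L-A)\bigr)=0$ for all $i>0$, and therefore $H^{i}(Y,\cO_{Y}(D))=0$ for $i>0$. The step I expect to require the most care is the uniform splitting statement, namely obtaining splittings at arbitrarily large Frobenius powers while keeping the auxiliary divisor $A$ fixed, since this is precisely where the rounding contributions from the non-integral part of $\Delta$ and the growth of $A$ under iterated Frobenius must be kept under control; the remaining steps are the formal twisting computation and an application of Serre vanishing.
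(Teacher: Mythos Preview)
Your proposal is correct and follows essentially the same route as the paper: perturb $\Delta$ so that $L=D-(K_Y+\Delta)$ becomes ample and $(p^e-1)\Delta$ is integral for a cofinal set of $e$, use the splitting of the twisted trace map guaranteed by global F-regularity, tensor (reflexively) with $\cO_Y(D)$, and conclude by Serre vanishing. The only difference is that you carry along an auxiliary ample Cartier divisor $A$ in the splitting $F_*^e\cO_Y((1-q)(K_Y+\Delta)-A)\to\cO_Y$, whereas the paper simply uses the splitting of $F_*^e\cO_Y((1-q)(K_Y+\Delta))\to\cO_Y$ (i.e.\ the case $A=0$, which is just F-splitting of the pair and is automatic from global F-regularity, and which persists along all sufficiently divisible $e$ by the usual iteration). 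Your extra $A$ is harmless but unnecessary, and dropping it also removes the ``uniform splitting'' step you flagged as the most delicate.
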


\begin{proof}
We note that the assumption implicitly requires that $D-(K_{Y}+\Delta)$ is $\bQ$-Cartier. We may perturb the pair as before and assume that $(p^{e}-1)\Delta$ has integral coefficients for sufficiently divisible $e$ and that $D-(K_{Y}+\Delta)$ is ample. Let $q=p^{e}$. Since $(Y,\Delta)$ is globally $F$-regular, the trace map 
\[
\mathrm{Tr}^{e}:F_{*}^{e}\mathcal{O}_{Y}((1-q)(K_{Y}+\Delta))\hookrightarrow F_{*}^{e}\mathcal{O}_{Y}((1-q)K_{Y})\rightarrow\mathcal{O}_{Y}
\]
splits for every sufficiently divisible $e$. Taking the reflexive tensor with $\cO_Y(D)$
we see that $H^{i}(Y,\cO_Y(D))$ is a direct summand of $H^{i}(Y,\cO_Y(D)\otimes F_{*}^{e}\mathcal{O}_{Y}((1-q)(K_{Y}+\Delta)))=H^{i}(Y,\mathcal{O}_{Y}((1-q)(K_{Y}+\Delta)+qD))$,
but the latter group is zero when $i>0$ and $q\gg0$ by Serre vanishing, thus $H^{i}(Y,\cO_Y(D))=0$.
\end{proof}

\begin{cor} \label{cor:relvanish}
Let $(Y,\Delta)$ be a globally $F$-regular pair, $f:Y\rightarrow X$ a proper morphism and $D$ an Weil divisor on $Y$ such that $D-(K_{Y}+\Delta)$ is $f$-nef and $f$-big. Then $R^i f_*\cO(D)=0$ for all $i>0$.
\end{cor}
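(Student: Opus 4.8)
The plan is to relativize the proof of Lemma~\ref{lem:vanishing}. Since the formation of $R^if_*$ is local on $X$, we may assume $X$ is affine (and in all the situations where this corollary is applied $f$ is birational, hence projective; in general one first reduces to that case). The first step is to reduce to the situation where $N:=D-(K_Y+\Delta)$ is $f$-ample. Indeed, $N$ is $\bQ$-Cartier, $f$-nef and $f$-big, so by the relative Kodaira lemma we may write $N\sim_{\bQ}A+E$ with $A$ an $f$-ample $\bQ$-Cartier divisor and $E\ge0$ a $\bQ$-Cartier divisor; then for $0<\epsilon\ll1$ the divisor $N-\epsilon E\sim_{\bQ}\epsilon A+(1-\epsilon)N$ is $f$-ample. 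Since $E$ is effective, $(Y,\Delta+\epsilon E)$ is again globally F-regular for $0<\epsilon\ll1$, and $D-(K_Y+\Delta+\epsilon E)=N-\epsilon E$ is $f$-ample; so after replacing $\Delta$ by $\Delta+\epsilon E$ we may assume $N$ is $f$-ample.

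Next I would run the Frobenius-splitting argument of Lemma~\ref{lem:vanishing} verbatim, with $H^i(Y,-)$ replaced by $R^if_*(-)$. After perturbing the coefficients of $\Delta$ (preserving global F-regularity), assume $(q-1)\Delta$ is integral for $q=p^e$ with $e$ sufficiently divisible; since $(Y,\Delta)$ is globally F-regular the trace map
\[
\mathrm{Tr}^e\colon F_*^e\cO_Y((1-q)(K_Y+\Delta))\hookrightarrow F_*^e\cO_Y((1-q)K_Y)\to\cO_Y
\]
splits. Taking the reflexive tensor product with $\cO_Y(D)$, applying $R^if_*$, and using the projection formula for the finite morphism $F^e$ together with $f\circ F^e_Y=F^e_X\circ f$ (so that $R^if_*(F^e_{Y*}(-))\cong F^e_{X*}(R^if_*(-))$), we see that $R^if_*\cO_Y(D)$ is a direct summand of
\[
F^e_{X*}\,R^if_*\cO_Y\big(qD+(1-q)(K_Y+\Delta)\big)=F^e_{X*}\,R^if_*\cO_Y\big(D+(q-1)N\big).
\]
Since $N$ is $f$-ample, relative Serre vanishing (applied after passing to a multiple of $N$ that is Cartier) gives $R^if_*\cO_Y(D+(q-1)N)=0$ for all $i>0$ as soon as $e\gg0$; hence $R^if_*\cO_Y(D)=0$ for $i>0$.

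The bulk of the work -- the splitting of the trace map and the projection-formula bookkeeping -- is identical to Lemma~\ref{lem:vanishing}, so the only genuinely new input is the reduction to the $f$-ample case. I expect that to be the main (though modest) obstacle: one has to check that the relative Kodaira lemma produces honest $\bQ$-Cartier data and that enlarging $\Delta$ to $\Delta+\epsilon E$ keeps the pair globally F-regular while upgrading ``$f$-nef and $f$-big'' to ``$f$-ample''. Everything else is routine.
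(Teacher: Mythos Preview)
Your argument is correct, but it is considerably more elaborate than what the paper actually does. The paper's proof is two lines: by the standard Leray/Serre-vanishing reduction, $R^if_*\cO_Y(D)=0$ is equivalent to $H^i(Y,\cO_Y(D+f^*H))=0$ for every sufficiently ample $H$ on $X$; and for such $H$ the divisor $(D+f^*H)-(K_Y+\Delta)$ is nef and big, so Lemma~\ref{lem:vanishing} applies directly to $D+f^*H$. No relativization of the trace-map argument is needed, nor any perturbation to the $f$-ample case --- the perturbation from ``nef and big'' to ``ample'' is already absorbed into the proof of Lemma~\ref{lem:vanishing} itself.

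What you do instead is transport the entire machinery of Lemma~\ref{lem:vanishing} to the relative setting: localize on the base, perturb $\Delta$ to make $N$ $f$-ample, and then rerun the Frobenius splitting with $R^if_*$ in place of $H^i$ and relative Serre vanishing in place of ordinary Serre vanishing. This is a legitimate and instructive route, and it makes the mechanism transparent without invoking the Leray spectral sequence, but it duplicates work: the paper's point is precisely that the global lemma already suffices once you twist by a pullback. Your parenthetical about reducing to the projective case and the bookkeeping with Cartier indices of $N$ (needed so that $(q-1)N$ is Cartier and relative Serre vanishing applies to $\cO_Y(D)\otimes\cO_Y((q-1)N)$) are extra wrinkles that the paper's approach sidesteps entirely.
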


\begin{proof}
It suffices to show that $H^i(Y,\cO_Y(D+f^*H))=0$ for sufficiently ample divisor $H$ on $X$. But for such $H$, $D+f^*H-(K_Y+\Delta)$ is nef and big by assumption, so the statement follows directly from Lemma \ref{lem:vanishing}.
\end{proof}

\subsection{Test ideals} \label{sec:test ideals}

Next we review the definition of test ideals on a smooth variety (this will be the only case we need and we refer to \cite{test_ideal} for a general treatment of test ideals). 

Our definition is taken from \cite{bms}. Let $X$ be a smooth variety and $\mathfrak{a}$ an ideal sheaf on $X$. Let $e$ be a positive integer. Let $\mathfrak{a}^{[1/p^e]}$ denote the unique smallest ideal sheaf $\cJ$ such that $\mathfrak{a}\subseteq\cJ^{[p^e]}$. One can show that (see \cite[Proposition 2.5 and Lemma 2.8]{bms})
\[\mathrm{Tr}^e_X(F^e_*(\mathfrak{a}\cdot \omega_X))=\mathfrak{a}^{[1/p^e]}\cdot \omega_X
\]
and that if $c\ge 0$ then
\[(\mathfrak{a}^{\lceil cp^e \rceil})^{[1/p^e]}\subseteq (\mathfrak{a}^{\lceil cp^{e+1} \rceil})^{[1/p^{e+1}]}.
\]

\begin{defn}
Given $\mathfrak{a}$ and $c\ge0$ as above. The \emph{test ideal} of the pair $(X,\mathfrak{a}^c)$ is defined to be
\[\tau(X,\mathfrak{a}^c)=\bigcup_{e\ge0} (\mathfrak{a}^{\lceil cp^e \rceil})^{[1/p^e]}.
\]
\end{defn}

Since $\cO_X$ is Noetherian, this is well-defined and by the previous discussion we have \[\tau(X,\mathfrak{a}^c)\cdot \omega_X=\mathrm{Tr}^e_X(F^e_*(\mathfrak{a}^{\lceil cp^e \rceil}\cdot \omega_X))
\]
for $e\gg0$. If $\mathfrak{a}=\cO_X(-D)$ for some divisor $D\ge0$, we simply write $\tau(X,\mathfrak{a}^c)$ as $\tau(X,\Delta)$ where $\Delta=cD$.

The following property of test ideals is well known to expert. It is analogous to the corresponding property of multiplier ideals in characteristic zero.

\begin{lem} \label{lem:mult and test ideal}
Let $X$ be a smooth variety of dimension $n$. Let $x\in X$ and let $D$ be an effective $\bQ$-divisor on $X$.
\begin{enumerate}
    \item If $\mult_x D<1$, then the pair $(X,D)$ is strongly $F$-regular at $x$. In particular, $\tau(X,D)_x=\cO_{X,x}$.
    \item If $\mult_x D=N\ge n$, then $\tau(X,D)\subseteq\mathfrak{m}_x^{\lfloor N-n+1 \rfloor}$. In particular, $(X,D)$ is not strongly $F$-regular at $x$.
\end{enumerate}
\end{lem}

\begin{proof}
(2) is a consequence of \cite[Proposition 3.3]{non-nef}, so we only need to prove (1). If $\cO_X(-\lceil p^e D \rceil)\subseteq \mathfrak{m}_x^{[p^e]}$ for $e\gg0$ then $\mult_x D\ge 1$, hence if $\mult_x D<1$ then $\tau(X,D)_x=\cO_{X,x}$ by definition. If follows that $(X,(1+\epsilon)D)$ is sharply $F$-pure for some $0<\epsilon\ll 1$. By \cite[Theorem 3.9]{ss}, this implies that $(X,D)$ is strongly $F$-regular at $x$.
\end{proof}

\begin{cor} \label{cor:fpt-Pn}
Let $X=\bP^n$ and let $D$ be an effective $\bQ$-divisor of degree $<1$ on $X$. Then $(X,D)$ is globally $F$-regular.
\end{cor}

\begin{proof}
This follows from Lemma \ref{lem:mult and test ideal}(1) by taking the cone over $(X,D)$.
\end{proof}

\subsection{Moving Seshadri constants} \label{sec:mov_Seshadri}

We now recall some results on moving Seshadri constants and carry out the reduction step of Theorem \ref{thm:weakbdd_m}. Essentially all the results in this section are contained in \cite{me} as the same proofs there work in any characteristic.

\begin{defn} \label{defn:moving Seshadri} \cite{nakamaye,elmn}
Let $D$ be a $\bQ$-divisor on a normal variety $X$. Let $x\in X$ be a smooth point. The moving Seshadri constant of $D$ at $x$ is defined to be 
\[\epsilon_m(D,x)=\limsup_k\frac{s(kD,x)}{k}
\]
where the limit is taken over sufficiently large and divisible integers $k$ and $s(kD,x)$ is the largest integer $s\ge -1$ such that the natural map
\[H^{0}(X,\cO_X(kD))\rightarrow H^{0}(X,\cO_X(kD)\otimes\cO_X/\mathfrak{m}_{x}^{s+1})
\]
is surjective. We also define
\[\epsilon_m(D)=\sup_{x\in X^{\circ}}\epsilon_m(D,x)
\]
where $X^\circ$ is the smooth locus of $X$. It is not hard to see that the supremum is actually a maximum and is achieved at a very general point of $X$.
\end{defn}

Here are some of the basic properties of moving Seshadri constants.

\begin{lem} \label{lem:restrict_m} 
Let $L$ be a $\bQ$-Cartier $\bQ$-divisor on $X$, let $\pi:Y\to X$ be a morphism, and let $y$ be a smooth point of $Y$ such that $X$ is smooth at $\pi(y)$ and $\pi$ is an immersion at $y$. Then $\epsilon_m(L,\pi(y))\le\epsilon_m(\pi^*L,y)$.
\end{lem}

\begin{proof}
This follows from the same proof of \cite[Lemma 3.1]{me}.
\end{proof}

\begin{lem} \label{lem:nondecrease}
Let $\phi:X\dashrightarrow Y$ be a birational contraction between normal varieties, then $\epsilon_{m}(-K_{X})\le\epsilon_{m}(-K_{Y})$ and $\vol(-K_X)\le\vol(-K_Y)$.
\end{lem}

\begin{proof}
The first statement is simply \cite[Corollary 3.3]{me} while the second follows from the injection $H^0(X,-mK_X)\rightarrow H^0(Y,-mK_Y)$ induced by $\phi$.
\end{proof}

\begin{defn} \cite[Theorem 1.33]{mmp}
A projective birational morphism $\phi:Y\rightarrow X$ is called a \emph{terminal modification} of $X$ if $Y$ is $\bQ$-factorial, terminal and $K_Y$ is $\phi$-nef.
\end{defn}

The existence of terminal modification is a formal consequence of the MMP, so by the work of \cite{mmp-hx,mmp-birkar,mmp-bw}, terminal modification exists for threeholds in characteristic $p>5$.

\begin{lem} \label{lem:mmodel}
Let $\phi:Y\rightarrow X$ be a terminal modification of $X$, then $\epsilon_m(-K_X)=\epsilon_m(-K_Y)$ and $\vol(-K_X)=\vol(-K_Y)$.
\end{lem}

\begin{proof}
The first equality follows from \cite[Lemma 3.4]{me}. To see the second equality, let $D_X\in|-mK_X|$ and $D_Y$ its strict transform on $Y$. We may write $mK_Y+D_Y+E_Y\sim\phi^*(mK_X+D_X)\sim 0$ for some $\phi$-exceptional divisor $E_Y$. Note that $E_Y$ has integral coefficients. Apply \cite[Lemma 2.5]{me} to the pair $(X,\frac{1}{m}D)$ we see that $E_Y$ is effective. It follows that $D_Y+E_Y\in|-mK_Y|$ and we have an injection $\phi^{-1}_*:H^0(X,-mK_X)\rightarrow H^0(Y,-mK_Y)$. On the other hand $\phi_*$ also induces an inclusion $H^0(Y,-mK_Y)\rightarrow H^0(X,-mK_X)$ and $\phi_*\circ\phi^{-1}_*=\mathrm{id}$, hence it's an isomorphism and $\vol(-K_X)=\vol(-K_Y)$.
\end{proof}

Using these properties we can easily reduce the proof of Theorem \ref{thm:weakbdd_m} to the case of Mori fiber spaces.

\begin{defn}
Let $X$ be a normal variety and $f:X\rightarrow Y$ a projective morphism with $f_*\cO_X=\cO_Y$. Then $f$ is called a \emph{Mori fiber space} if
\begin{enumerate}
    \item $X$ has $\bQ$-factorial terminal singularities,
    \item the relative Picard number $\rho(X/Y)=1$, and
    \item $-K_X$ is $f$-ample.
\end{enumerate}
\end{defn}

\begin{lem} \label{lem:reduction}
It suffices to prove Theorem \ref{thm:weakbdd_m} when the threefold $X$ admits a Mori fiber space structure.
\end{lem}

\begin{proof}
Let $Y\rightarrow X$ be a terminal modification of $X$ and run the $K_Y$-MMP on $Y$. Since $K_X$ is not pseudoeffective by assumption, the MMP ends with $Y\dashrightarrow Y_1$ where $Y_1$ admits a Mori fiber space structure. By Lemma \ref{lem:nondecrease} and \ref{lem:mmodel}, we have $\epsilon_m(-K_{Y_1})\ge\epsilon_m(-K_X)$ and $\vol(-K_{Y_1})\ge\vol(-K_X)$. Thus if Theorem \ref{thm:weakbdd_m} holds for Mori fiber spaces then it holds for all threefolds as well.
\end{proof}

\section{Fano varieties with Seshadri constants at least $n$} \label{sec:at least n}

In this section, we study Fano varieties with anticanonical Seshadri constants no smaller than their dimension and in particular prove Theorem \ref{thm:Pn}, \ref{thm:=n} and \ref{thm:n^n}. Since this eventually reduces to the classification of certain varieties containing a projective space in the smooth locus or  Gorenstein conic bundles containing the projective space as a double section, we first give a treatment of these two topics.

\subsection{Varieties containing projective space as a divisor} \label{sec:contain_Pn}

In \cite{lz16}, an important step in the classification of varieties $X$ with $\epsilon(-K_X,x)\ge n$ is the classification of varieties (over $\bC$) that contain a divisor $D\cong\bP^{n-1}$ in the smooth locus. In this section we carry out the parallel study of such varieties in positive characteristic. We start with the Picard number one case.

\begin{lem} \label{lem:picsurj}
Let $X$ be a normal projective variety of dimension $n$ and $D\cong\mathbb{P}^{n-1}$ a divisor contained in its smooth locus. Assume that $\mathcal{N}_{D/X}$ is nef and $n\ge3$ if $\mathcal{N}_{D/X}$ is ample. Then the natural restriction $\mathrm{Cl}(X)\rightarrow\mathrm{Cl}(D)$
is surjective.
\end{lem}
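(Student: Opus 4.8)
The plan is to show that any prime divisor $\Gamma$ on $D\cong\bP^{n-1}$, i.e. (a multiple of) a hyperplane class $h$, extends to a Weil divisor on $X$. Since $\mathrm{Cl}(D)\cong\bZ\cdot h$, it suffices to produce a single Weil divisor on $X$ whose restriction to $D$ is a nonzero multiple of $h$; the precise multiple can then be adjusted afterwards, or one argues directly that $h$ itself is hit. First I would consider the exact sequence of divisor class groups associated to the open immersion $X\setminus D\hookrightarrow X$, namely $\bZ\cdot D\to\mathrm{Cl}(X)\to\mathrm{Cl}(X\setminus D)\to 0$, together with the restriction map $\mathrm{Cl}(X)\to\mathrm{Cl}(D)$. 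The strategy is to find an ample Cartier divisor $A$ on $X$ and show that, after twisting, sections of $\cO_X(mA)$ restrict surjectively onto $H^0(D,\cO_D(mA|_D))$ for suitable $m$, because $mA|_D$ is some positive multiple of $h$; surjectivity of the restriction of sections gives, by taking the divisor of a section not vanishing on $D$, a Weil divisor on $X$ restricting to an effective divisor in $|mA|_D|$, hence a multiple of $h$ in $\mathrm{Cl}(D)$.

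The key step is therefore a surjectivity-on-global-sections statement, and this is where the hypotheses on $\cN_{D/X}$ and the F-regularity machinery of \S\ref{sec:f-reg} enter. From the conormal/normal sequence and the twisted ideal sheaf sequence
\[
0\to\cO_X(mA-D)\to\cO_X(mA)\to\cO_D(mA|_D)\to 0,
\]
the restriction $H^0(X,\cO_X(mA))\to H^0(D,\cO_D(mA|_D))$ is surjective once $H^1(X,\cO_X(mA-D))=0$. I would arrange this via Lemma \ref{lem:vanishing}: since $D\cong\bP^{n-1}$ sits in the smooth locus and $\cN_{D/X}$ is nef, the pair $(X,0)$ — or rather $(X,\epsilon D)$ after a small perturbation as in Lemma \ref{lem:F-reg} — should be globally F-regular in a neighborhood argument, using that $(D,0)=(\bP^{n-1},0)$ is globally F-regular, $\cN_{D/X}$ nef makes $-D|_D$ anti-nef and the adjunction $-(K_X+D)|_D=-K_D-\text{(nef)}$ is still ample-ish; this is precisely the kind of input Lemma \ref{lem:F-reg} is designed to feed on. Once global F-regularity (or at least the relevant vanishing) is in hand, choosing $A$ ample with $mA-D-K_X$ nef and big for $m\gg0$ kills $H^1(X,\cO_X(mA-D))$.

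The main obstacle is the borderline case $\cN_{D/X}$ ample with $n=2$, which the statement explicitly excludes: there $D\cong\bP^1$ is a curve, $\mathrm{Cl}(D)=\bZ$ is detected by degree, and a $(-a)$-curve type phenomenon with $\cN_{D/X}$ of small positive degree can obstruct extending the generator — this is why the hypothesis $n\ge 3$ is imposed, and I would simply invoke it to rule this out. The remaining subtlety is the perturbation: $-D|_D$ being only nef (not ample) when $\cN_{D/X}$ is merely nef means one cannot directly apply Lemma \ref{lem:F-reg} to the pair $(X,D)$; instead I would run the argument with $\epsilon D$, note $(D,0)$ globally F-regular and $L|_D$ ample for an appropriate ample $L$ on $X$ (possible since $n\ge 2$ and $\cN_{D/X}$ nef give enough positivity on $D$), conclude $(X,\epsilon D)$ globally F-regular, and then apply Lemma \ref{lem:vanishing} to $D'=mA-D$ with $mA-D-(K_X+\epsilon D)$ nef and big. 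A secondary check is that $\mathrm{Cl}(D)\cong\bZ$ really is generated by the image of what we produce, i.e. that the effective divisor we extract restricts to $\cO_D(m A|_D)$ with $mA|_D$ a nonzero multiple of $h$ — immediate since $A|_D$ is ample on $\bP^{n-1}$, hence a positive multiple of $h$, and surjectivity of $\mathrm{Cl}(X)\to\mathrm{Cl}(D)$ onto the subgroup generated by $mA|_D$ combined with $\mathrm{Cl}(D)\cong\bZ$ forces it onto all of $\mathrm{Cl}(D)$ after observing we may take $m$ coprime choices or simply that the image is a subgroup of $\bZ$ containing a positive element and, by varying $A$, the gcd is $1$.
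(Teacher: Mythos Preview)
Your argument has a fundamental circularity. Suppose $H^0(X,\cO_X(mA))\to H^0(D,\cO_D(mA|_D))$ is surjective and you pick a section $s$ not vanishing on $D$; the divisor $\Gamma=\mathrm{div}(s)$ is \emph{linearly equivalent to $mA$} on $X$, so its class in $\mathrm{Cl}(X)$ is $[mA]$ and its restriction to $D$ is $[mA|_D]$. But $[A|_D]$ is in the image of $\mathrm{Cl}(X)\to\mathrm{Cl}(D)$ by definition of the restriction map --- you have produced nothing new. ``Varying $A$'' over ample divisors on $X$ just runs over elements already in the image; the assertion that the gcd of all $\deg(A|_D)$ is $1$ is exactly the statement you are trying to prove. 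The conic $D\subset\bP^2$ (excluded by hypothesis, cf.\ the Remark after the lemma) shows concretely how your argument fails to detect the obstruction: surjectivity on sections holds for every $m$, yet the image of $\mathrm{Cl}(\bP^2)\to\mathrm{Cl}(D)\cong\bZ$ is $2\bZ$.

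There is also a gap in the F-regularity input. Lemma~\ref{lem:F-reg} needs $-(K_X+D)$ nef and big \emph{on $X$}; the present lemma carries no hypothesis on $K_X$ whatsoever (only that $D\cong\bP^{n-1}$ lies in the smooth locus with $\cN_{D/X}$ nef), so you cannot invoke it. Adjunction only tells you $-(K_X+D)|_D=-K_D$ is ample, which is not enough.

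The paper's proof is entirely different and does not use F-regularity or vanishing at all. It first reduces to $\cN_{D/X}\cong\cO_D$ by blowing up a degree-$d$ hypersurface $Z\subset D$ (connectedness of $Z$ is where $n\ge 3$ enters). In the trivial-normal-bundle case, $h^0(\cN_{D/X})=1$ and $h^1(\cN_{D/X})=0$, so the Hilbert scheme is smooth of dimension one at $[D]$; deforming $D$ gives a one-parameter family $Y\to C$ of disjoint copies of $\bP^{n-1}$ in $X$, which after shrinking $C$ is an open immersion $Y\hookrightarrow X$ and a $\bP^{n-1}$-bundle over $C$ (Tsen). Then $\mathrm{Cl}(X)\twoheadrightarrow\mathrm{Cl}(Y)\twoheadrightarrow\mathrm{Cl}(\text{fiber})=\mathrm{Cl}(D)$.
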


\begin{proof}
Let $d=\deg\mathcal{N}_{D/X}$. If $d>0$, let $Z\subseteq D$ be a smooth hypersurface of degree
$d$ and let $\tilde{X}$ be the blow up of $X$ along $Z$. Note
that since $n\ge3$, $Z$ is connected. Let $E$ be the exceptional
divisor and $\tilde{D}$ the strict transform of $D$. Then we have
$\mathrm{Cl}(\tilde{X})\cong\mathrm{Cl}(X)\oplus\mathbb{Z}[E]$ and
the image of $\mathrm{Cl}(\tilde{X})\rightarrow\mathrm{Cl}(\tilde{D})$
is the same as the image of $\mathrm{Cl}(X)\rightarrow\mathrm{Cl}(D)\cong\mathrm{Cl}(\tilde{D})$.
Since $\mathcal{N}_{\tilde{D}/\tilde{X}}\cong\mathcal{O}_{\tilde{D}}$,
we may replace $(X,D)$ by $(\tilde{X},\tilde{D})$ and reduce to
the case that $d=0$.

As $D\cong\mathbb{P}^{n-1}$ and $d=0$, we have $h^{0}(D,\mathcal{N}_{D/X})=1$
and $h^{1}(D,\mathcal{N}_{D/X})=0$, hence the Hilbert scheme of $X$
is smooth and of dimension 1 at the point $[D]$. It follows
that there exists a curve $C$ (not necessarily proper) and a family
of divisors of $X$
\[
\xymatrix{Y\ar[d]_{f}\ar[r]^{g} & X\\
C
}
\]
such that $f$ is smooth, $g$ identifies a fiber $F$ of $f$ with
$D$ and if $F_{s},F_{t}$ are fibers of $f$ over $s\neq t\in C$,
then $g(F_{s})\neq g(F_{t})$. As $\mathbb{P}^{n-1}$ is rigid, after shrinking $C$ we may assume that
all fibers of $f$ are isomorphic to $\mathbb{P}^{n-1}$; moreover since $C$ is a curve, $f$ is indeed a $\mathbb{P}^{n-1}$-bundle by Tsen's theorem.
On the other hand, as $\mathcal{N}_{D/X}\cong\mathcal{O}_{D}$, we
have $D'\cap D=\emptyset$ if $D'\neq D$ is algebraically equivalent
to $D$, thus $g$ is an isomorphism in a neighbourhood of $D$. Therefore,
after further shrinking of $C$ we may assume that $g$ is an open immersion.
Then $g^*:\mathrm{Cl}(X)\rightarrow\mathrm{Cl}(Y)$ is surjective. Since
$f:Y\rightarrow C$ is a $\mathbb{P}^{n-1}$-bundle, $\mathrm{Cl}(Y)\rightarrow\mathrm{Cl}(F)$
is also surjective, so the lemma follows.
\end{proof}

\begin{rem}
The $n\ge3$ assumption in the above lemma is necessary if $\mathcal{N}_{D/X}$ is ample, since $\mathrm{Cl}(X)\rightarrow\mathrm{Cl}(D)$ is not surjective when $D$ is a conic in $X=\mathbb{P}^{2}$. It is also not hard to see that the statement does not hold if $\mathcal{N}_{D/X}$ has negative degree. For example, consider a general surface $S$ of degree $d\ge 4$ that contains a conic curve $C$, then $\Pic(S)$ is generated by $C$ by \cite[Theorem II.3.1]{lopez} and the hyperplane class $H$. Since $(C\cdot H)=2$ and $(C^2)=2(3-d)$, $\Pic(S)\rightarrow\Pic(C)$ is not surjective and the image is an index 2 subgroup.
\end{rem}

\begin{lem} \label{lem:rho1}
Let $X$ be a normal projective variety of dimension $n\ge2$ containing
a divisor $D\cong\mathbb{P}^{n-1}$ in its smooth locus. Assume that
$\rho(X)=1$, then one of the following holds:
\begin{enumerate}
\item $X\cong\mathbb{P}(1^{n},d)$ for some $d\in\mathbb{Z}_{>0}$ and $D$
is the hyperplane defined by the vanishing of the last coordinate;
or
\item $n=2$, $X\cong\mathbb{P}^{2}$ and $D$ is a smooth conic.
\end{enumerate}
\end{lem}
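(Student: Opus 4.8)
The plan is to show that $\mathcal{O}_X(D)$ is an ample line bundle, to deduce from Lemma~\ref{lem:F-reg} that $(X,0)$ is globally F-regular, and then to recover $X$ from a suitable section ring, which the resulting vanishing forces to be (a Veronese of) a weighted polynomial ring. Since $D$ lies in the smooth locus it is Cartier near itself, and $\mathcal{O}_X(D)$ is visibly trivial away from $D$, so $\mathcal{O}_X(D)$ is a line bundle on $X$; being a nonzero effective class with $\rho(X)=1$, it is ample. Writing $\mathcal{N}_{D/X}=\mathcal{O}_{\bP^{n-1}}(d)$, ampleness of $\mathcal{O}_X(D)|_D$ forces $d\ge1$, and adjunction along $D\subseteq X_{\mathrm{sm}}$ gives $K_X|_D=\mathcal{O}_{\bP^{n-1}}(-n-d)$. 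Since $K_X$ is $\bQ$-Cartier (a point I take for granted here; it is automatic in the intended applications) and $\rho(X)=1$, this pins down $-K_X\equiv_\bQ\frac{n+d}{d}D$, so that $L:=-(K_X+D)\equiv_\bQ\frac{n}{d}D$ is ample. Lemma~\ref{lem:F-reg}, applied with $E=D$ and $\Delta=0$, now yields that $(X,0)$ is globally F-regular — $L$ is ample, $D\cong\bP^{n-1}$ sits in the smooth locus, $(\bP^{n-1},0)$ is globally F-regular, and $L|_D=-K_D=\mathcal{O}_{\bP^{n-1}}(n)$ is ample — and hence, by Lemma~\ref{lem:vanishing}, $H^i(X,\mathcal{O}_X(M))=0$ for every $i>0$ and every Weil divisor $M$ with $M-K_X$ numerically a positive multiple of $D$.

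Suppose first that $n\ge3$. Then $\mathcal{N}_{D/X}$ is ample, so Lemma~\ref{lem:picsurj} gives that $\mathrm{Cl}(X)\to\mathrm{Cl}(D)\cong\bZ$ is surjective; fix a Weil divisor $H$ with $H|_D=\mathcal{O}_{\bP^{n-1}}(1)$, which is ample and $\bQ$-Cartier since $dH\sim D$ is Cartier, and let $z\in H^0(X,\mathcal{O}_X(dH))=H^0(X,\mathcal{O}_X(D))$ be the section cutting out $D$. Feeding the vanishing into the exact sequences $0\to\mathcal{O}_X((m-d)H)\xrightarrow{z}\mathcal{O}_X(mH)\to\mathcal{O}_{\bP^{n-1}}(m)\to0$, one finds that the restriction map from $R:=\bigoplus_{m\ge0}H^0(X,\mathcal{O}_X(mH))$ onto $\bigoplus_{m}H^0(\bP^{n-1},\mathcal{O}(m))=k[y_0,\dots,y_{n-1}]$ is a surjective ring homomorphism with kernel the ideal $(z)$. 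A graded domain that becomes a polynomial ring upon killing a nonzerodivisor $z$ is itself the polynomial ring on lifts of the $y_i$ together with $z$, so $R\cong k[y_0,\dots,y_{n-1},z]$ with $\deg y_i=1$ and $\deg z=d$; hence $X=\mathrm{Proj}\,R\cong\bP(1^n,d)$, with $D=\{z=0\}$ the hyperplane defined by the vanishing of the last coordinate.

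Suppose instead that $n=2$, where Lemma~\ref{lem:picsurj} is unavailable. I would then run the same computation with the ample Cartier divisor $D$ itself: the sequences $0\to\mathcal{O}_X((k-1)D)\to\mathcal{O}_X(kD)\to\mathcal{O}_{\bP^1}(kd)\to0$ and the vanishing show that $R_D:=\bigoplus_{k\ge0}H^0(X,\mathcal{O}_X(kD))$ is generated in degree $1$ and that $R_D/(s)\cong\bigoplus_{k}k[y_0,y_1]_{kd}$, the $d$-th Veronese of a polynomial ring. Consequently $X=\mathrm{Proj}\,R_D$ is a nondegenerate surface of degree $(D^2)=d$ in $\bP^{d+1}$, i.e.\ a surface of minimal degree, so by the classification of varieties of minimal degree $X$ is $\bP^2$, a rational normal scroll, a cone over a rational normal curve, or the Veronese surface $v_2(\bP^2)\cong\bP^2$. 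The scrolls have Picard number $2$, so $\rho(X)=1$ leaves exactly $X\cong\bP^2$ (with $D$ a line when $d=1$ and a conic when $d=4$) or $X\cong\bP(1,1,d)$.

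The crux is the vanishing $H^{>0}(X,\mathcal{O}_X(kD))=0$: over $\bC$, in the argument of \cite{lz16}, it is immediate from Kawamata–Viehweg, whereas here it must be extracted from global F-regularity, and essentially all of the preliminary work is devoted to producing the numerical identity $L\equiv_\bQ\frac{n}{d}D$ that lets one invoke Lemma~\ref{lem:F-reg}. The only genuinely separate point is the surface case, which lies outside the scope of Lemma~\ref{lem:picsurj} — this is what makes room for the extra possibility of $\bP^2$ with a conic — and which is settled instead by classical surface geometry.
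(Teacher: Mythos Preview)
There is a genuine gap, which you flag but do not close: the assumption that $K_X$ is $\bQ$-Cartier. Without it, Lemma~\ref{lem:F-reg} does not apply (its hypothesis that $-(K_X+D)$ be nef and big requires $K_X+D$ to be $\bQ$-Cartier), and the deduction $-K_X\equiv_\bQ\tfrac{n+d}{d}D$ from $\rho(X)=1$ has no meaning. There is a second, related issue in the $n\ge3$ case: having used Lemma~\ref{lem:picsurj} to produce a Weil divisor $H$ with $H|_D=\mathcal{O}(1)$, you assert $dH\sim D$, but surjectivity of $\mathrm{Cl}(X)\to\mathrm{Cl}(D)$ only gives $(dH-D)|_D\sim0$; concluding $dH\sim D$ on $X$ (and hence that $H$ is $\bQ$-Cartier) requires injectivity of the restriction. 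The paper handles both points by first proving that $X$ is $\bQ$-factorial: for $n\ge3$ via the SGA2 Lefschetz theorem, giving $\mathrm{Cl}(X)\cong\mathrm{Pic}(\hat{X})\hookrightarrow\mathrm{Pic}(D)$ and hence, together with Lemma~\ref{lem:picsurj}, $\mathrm{Cl}(X)\cong\bZ$; for $n=2$ via Lemma~\ref{lem:Q-factorial}. Nor is your assumption ``automatic in the intended applications'': the lemma is applied to the target $Z$ of a birational contraction $g:Y\to Z$, and one only knows a priori that $K_Z+\Delta+G$ is $\bQ$-Cartier, not $K_Z$ itself.

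Once $\bQ$-factoriality is supplied, your $n\ge3$ section-ring argument is correct and is essentially a repackaging of the paper's explicit construction of the degree-one map $X\to\bP(1^n,d)$. Your $n=2$ argument, by contrast, takes a genuinely different route: you invoke the classification of varieties of minimal degree, whereas the paper performs a direct case analysis on the image of $\mathrm{Cl}(X)\to\mathrm{Pic}(D)$ and the value of $d$. Your approach is slicker but imports an external classification; the paper's is longer but self-contained.
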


\begin{proof}
First consider the case $n\ge3$. Let $X_{0}$ be the smooth
locus of $X$. Since $\rho(X)=1$, $D$ is ample, so $X$ has only
isolated singularities and the natural map $\mathrm{Cl}(X)\cong\mathrm{Pic}(X_{0})\rightarrow\mathrm{Pic}(\hat{X})$
is an isomorphism by \cite[Expos\'e XI, Proposition 2.1]{sga2}, where $\hat{X}$ is the
formal conpletion of $X$ along $D$. As $D\cong\mathbb{P}^{n-1}$
and $n\ge3$, we have $H^{1}(D,\mathcal{O}_{D}(-mD))=0$ for all $m$,
hence by the exact sequence (c.f. \cite[Expos\'e XI, \S 1]{sga2})
\begin{equation}
H^{1}(D,\mathcal{O}_{D}(-mD))\rightarrow\mathrm{Pic}(D_{m+1})\rightarrow\mathrm{Pic}(D_{m})\rightarrow H^{2}(D,\mathcal{O}_{D}(-mD))
\end{equation}
the restriction map $\mathrm{Pic}(\hat{X})\rightarrow\mathrm{Pic}(D)$
is injective; on the other hand it is also surjective by Lemma \ref{lem:picsurj},
thus we have an isomorphism $\mathrm{Cl}(X)\cong\mathrm{Pic}(D)$.
In particular, $X$ is $\mathbb{Q}$-factorial and since $-(K_{X}+D)|_{D}=-K_{D}$
is ample, $-(K_{X}+D)$ is ample on $X$ itself. By Lemma \ref{lem:F-reg},
$X$ is globally $F$-regular. 

Let $H$ be the ample generator of $\mathrm{Cl}(X)$, then $\mathcal{O}_{D}(H)\cong\mathcal{O}_{D}(1)$
and there exists a positive integer $d$ such that $D\sim dH$. Consider
the exact sequence
\begin{equation}
H^{0}(X,\mathcal{O}_{X}(H-D))\rightarrow H^{0}(X,\mathcal{O}_{X}(H))\rightarrow H^{0}(D,\mathcal{O}_{D}(H))\rightarrow H^{1}(X,\mathcal{O}_{X}(H-D))\label{eq:H0(H)}
\end{equation}
Since $X$ is globally $F$-regular and $H-(K_{X}+D)$ is ample, we have
$H^{1}(X,\mathcal{O}_{X}(H-D))=0$ by Lemma \ref{lem:vanishing}.
If $d=1$, then $H\sim D$ is Cartier and it follows from (\ref{eq:H0(H)})
that $H$ is globally generated and $h^{0}(X,H)=n+1$, thus $|H|$
induces a morphism $X\rightarrow\mathbb{P}^{n}$ of degree $(H^{n})=(H^{n-1}\cdot D)=1$,
which is an isomorphism $X\cong\mathbb{P}^{n}$. If $d>1$, then $H^{0}(X,\mathcal{O}_{X}(H-D))=0$
and by (\ref{eq:H0(H)}) we have $h^{0}(X,H)=n$ and is globally generated
in a neighbourhood of $D$. The global sections of $\mathcal{O}_{X}(H)$
and the canonical section of $\mathcal{O}_{X}(D)\cong\mathcal{O}_{X}(dH)$
then defines a morphism $X\rightarrow\mathbb{P}(1^{n},d)$ of degree
$(H^{n-1}\cdot D)=1$, which is again an isomorphism $X\cong\mathbb{P}(1^{n},d)$.
By construction, $D$ is identified with the hyperplane defined by
the vanishing of the last coordinate.

Next assume $n=2$. By assumption $(D^{2})>0$, thus by Lemma \ref{lem:Q-factorial},
$X$ is $\mathbb{Q}$-factorial. As in the $n\ge3$ case, we still
have $-(K_{X}+D)$ is ample and $X$ is globally $F$-regular. If $\mathrm{Cl}(X)\rightarrow\mathrm{Pic}(D)$
is surjective then as before we have $X\cong\mathbb{P}(1,1,d)$ for
some $d>0$. If $\mathrm{Cl}(X)\rightarrow\mathrm{Pic}(D)$ is not
surjective then since $(K_{X}+D\cdot D)=-2$ we see that the image
is generated by the restriction of $H=-(K_{X}+D)$. We also have $D\sim_{\mathbb{Q}}dH$
for some $d>0$. We now divide into three cases according to the value
of $d$. 

If $d=1$, then by the same argument as in the $n\ge3$ case, $|D|$
is base point free and identifies $X$ with a quadric in $\mathbb{P}^{3}$
(note that $(D^{2})=2d=2$ and $h^{0}(X,D)=4$) and $D$ a hyperplane
section. Since $\rho(X)=1$, $X$ is singular, but then $\mathrm{Cl}(X)\rightarrow\mathrm{Pic}(D)$
is surjective, contrary to our assumption.

If $d=2$, then $(H^{2})=\frac{1}{d}(H\cdot D)=1$. As before, by
(\ref{eq:H0(H)}) and the global $F$-regularity of $X$ we have $H^{0}(X,\mathcal{O}_{X}(H))\cong H^{0}(D,\mathcal{O}_{D}(2))$
and $h^{0}(X,H)=3$, hence for any $x\in D$, we may choose two different
$H_{1},H_{2}\sim H$ passing through $x$. Clearly both $H_{i}$ are
integral (otherwise $\mathrm{Cl}(X)\rightarrow\mathrm{Pic}(D)$ is
surjective). Since $(H_{1}\cdot H_{2})=(H^{2})=1$, we see that $H_{1}$
only intersects $H_{2}$ at $x$. It follows that $H$ is Cartier,
base point free and defines a morphism $X\rightarrow\mathbb{P}^{2}$
of degree $1$, which is an isomorphism that identifies $D$ with
a smooth conic.

Finally if $d\ge3$, we still have $H^{0}(X,\mathcal{O}_{X}(H))\cong H^{0}(D,\mathcal{O}_{D}(2))$.
Let $s_{0}$ be the canonical section of $H^{0}(X,\mathcal{O}_{X}(D))$.
Choose $s_{1},s_{2}\in H^{0}(X,\mathcal{O}_{X}(H))$ whose restrictions
on $D$ induce a separable morphism $D\rightarrow\mathbb{P}^{1}$
of degree 2. Then we can define a separable double cover $f:X\rightarrow Y=\mathbb{P}(1,1,d)$
sending $x\in X$ to $[s_{1}(x):s_{2}(x):s_{0}(x)]$. We have $K_{X}=f^{*}K_{Y}+R$
for some divisor $R$ supported in the branched locus of $f$. A direct
calculation yields $R\sim_{\mathbb{Q}}H$, thus $f_{*}R\sim f_{*}H\sim2L$
where $L$ is the ample generator of $\mathrm{Cl}(Y)\cong\mathbb{Z}$.
But since $d\ge3$, $f_{*}R$ and thus $R$ cannot be integral. It
follows that we have a decomposition $H\sim_{\mathbb{Q}}R_{1}+R_{2}$
for some effective nonzero $\mathbb{Z}$-divisor $R_{1},R_{2}$, but
then $(R_{1}\cdot D)+(R_{2}\cdot D)=(H\cdot D)=2$ and $\mathrm{Cl}(X)\rightarrow\mathrm{Pic}(D)$
is surjective. So this case cannot happen and the proof is now complete.
\end{proof}

The following lemma is used in the above proof.

\begin{lem} \label{lem:Q-factorial}
Let $X$ be a normal projective surface. Suppose
there exists a smooth rational curve $C$ contained in the smooth
locus of $X$ such that $(C^{2})\ge0$. Then $X$ has rational singularities.
In particular, $X$ is $\mathbb{Q}$-factorial.
\end{lem}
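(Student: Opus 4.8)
The statement to prove is Lemma~\ref{lem:Q-factorial}: a normal projective surface $X$ containing a smooth rational curve $C$ in its smooth locus with $(C^2)\ge 0$ has rational singularities, and is therefore $\bQ$-factorial. My overall strategy is to produce enough ample mobility out of $C$ to conclude that each singular point of $X$ is dominated by curves meeting the smooth locus in a controlled way, and to run the standard criterion that a normal surface singularity is rational once one exhibits a resolution with small enough arithmetic genus of the exceptional cycle. The cleanest route, though, is to reduce to the well-known fact that a normal projective surface carrying a big and globally generated (or at least big and nef) divisor whose general member avoids the singularities has rational singularities; then the final clause follows because rational surface singularities are automatically $\bQ$-factorial (see e.g. Lipman's work on rational singularities, or Koll\'ar--Mori).

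First I would dispose of the case $(C^2)>0$: here $C$ is big, and since $C$ lies in $X_{\mathrm{sm}}$ its general deformation (or a general member of $|mC|$ for $m\gg 0$) can be chosen to avoid $\mathrm{Sing}(X)$, which already gives a big curve supported in the smooth locus. Then I would pass to a resolution $\pi:\tX\to X$ and argue that on $\tX$ the strict transform, together with a $\pi$-ample correction supported on the exceptional locus, still deforms away from the exceptional divisors; the upshot is that each connected component of the exceptional locus of $\pi$ is contracted by a big divisor, forcing $R^1\pi_*\cO_{\tX}=0$ by a vanishing/degeneration argument. This is exactly the content of rationality. For the case $(C^2)=0$, the divisor $C$ is nef and $C\cdot C=0$, and I would instead use that $C$ moves in an at-least-one-dimensional, base-point-free (after passing to a multiple, by Zariski's theorem on semiampleness of nef divisors with no curve of negative self-intersection in the relevant range on a surface, or more simply by Riemann--Roch plus the adjunction $p_a(C)=0$) pencil sweeping out $X$. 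The induced morphism $X\to B$ to a curve has general fiber a smooth rational curve in the smooth locus; pulling back to a resolution and applying the Leray spectral sequence for the fibration reduces the rationality of the singularities of $X$ to the triviality of $R^1$ of the structure sheaf, which one checks fiber by fiber using that the generic fiber is $\bP^1$.

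The step I expect to be the genuine obstacle is producing the \emph{movement} of $C$ when $(C^2)=0$ without circularity: a priori $X$ is only normal, so I cannot invoke surface-specific vanishing theorems freely, and I must be careful that $h^0(X,\cO_X(mC))$ grows linearly in $m$. The tool here is that on the resolution $\tX$, the strict transform $\tilde C$ satisfies $\tilde C^2=0$ and $\tilde C\cdot K_{\tX}<0$ (since $C\cong\bP^1$ and $C$ avoids $\mathrm{Sing}(X)$ so $K_{\tX}\cdot\tilde C = K_X\cdot C = -2$), whence Riemann--Roch on the smooth surface $\tX$ forces $h^0(\tX,\cO_{\tX}(m\tilde C))\ge \chi(\cO_{\tX}) + m(\tfrac{-\tilde C\cdot K_{\tX}}{2}) \to\infty$, giving the pencil after removing the fixed part (which must be $\pi$-exceptional or $0$ since $\tilde C^2=0$ and $\tilde C$ is nef). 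Once the fibration $X\to B$ is in hand the rest is routine. I would close by remarking that the "in particular" is standard: for a normal surface, rational singularities $\Rightarrow$ $\bQ$-factorial, because the obstruction to Cartierness of a Weil divisor near a singular point lies in a group that vanishes precisely when $R^1\pi_*\cO_{\tX}=0$.
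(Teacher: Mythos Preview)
Your core strategy in the $(C^2)=0$ case---pass to a resolution $\pi:\tX\to X$, use Riemann--Roch together with $K_{\tX}\cdot\tilde C=-2$ to force $h^0(\tX,m\tilde C)\to\infty$, extract a fibration over a curve with general fiber $\bP^1$, and deduce rationality from the fibration structure---is exactly what the paper does (it cites Cheltsov for the last implication and Lipman for rational $\Rightarrow$ $\bQ$-factorial). Two places where the paper is cleaner:

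First, the paper does \emph{not} treat $(C^2)>0$ separately. It simply blows up smooth points of $X$ lying on $C$ until the self-intersection drops to $0$; the new surface has the same singularities as $X$, and the strict transform of $C$ is still a smooth rational curve in the smooth locus. This one-line reduction replaces your handling of the big case, where phrases like ``each connected component of the exceptional locus is contracted by a big divisor, forcing $R^1\pi_*\cO_{\tX}=0$ by a vanishing/degeneration argument'' are not an argument in positive characteristic: the implication you are invoking would need Kodaira-type vanishing, which is unavailable here.

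Second, once $(\tilde C^2)=0$ and $h^0(\tX,m\tilde C)\ge2$, the paper gets base-point-freeness of $|m\tilde C|$ directly: pick $\Gamma\in|m\tilde C|$ with $\tilde C\not\subseteq\Supp(\Gamma)$; then $(\tilde C\cdot\Gamma)=m(\tilde C^2)=0$ forces $\Gamma\cap\tilde C=\emptyset$, so $|m\tilde C|$ already has no base points. This sidesteps your fixed-part analysis and in particular the unjustified claim that the fixed part must be $\pi$-exceptional or zero.
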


\begin{proof}
After possibly blowing up points on $C$ we reduce to the case that
$(C^{2})=0$. Let $\tilde{X}\rightarrow X$ be the minimal resolution
of $X$ and let $\tilde{C}$ also denote its strict transform on $\tilde{X}$.
Since $C$ is a smooth rational curve we have $(K_{\tilde{X}}\cdot\tilde{C})=-2$
by adjunction. By Riemann-Roch we have 
\[
\chi(\mathcal{O}_{\tilde{X}}(m\tilde{C}))=\frac{1}{2}(m\tilde{C}\cdot m\tilde{C}-K_{\tilde{X}})+\chi(\mathcal{O}_{\tilde{X}})=m+\chi(\mathcal{O}_{\tilde{X}})
\]
On the other hand by Serre duality we have $h^{2}(\tilde{X},m\tilde{C})=h^{0}(\tilde{X},K_{\tilde{X}}-m\tilde{C})=0$
when $m\gg0$. It follows that $h^{0}(\tilde{X},m\tilde{C})\ge2$
for sufficiently large $m$. Hence there exists an effective divisor
$\Gamma\sim m\tilde{C}$ for some $m>0$ such that $\tilde{C}\not\subseteq\mathrm{Supp}(\Gamma)$.
As $(\tilde{C}\cdot\Gamma)=m(C^{2})=0$, we see that $\Gamma$ is
disjoint from $\tilde{C}$, thus $m\tilde{C}$ is base point free.
Since $\tilde{C}$ is the pullback of $C$, $C$ is semiample and
induces a morphism $p:X\rightarrow Y$ with connected fibers to a
curve $Y$ such that the general fiber is isomorphic to $C$ (if $\Gamma\equiv mC$
is an irreducible fiber in the smooth locus of $X$, then $2p_{a}(\Gamma)-2=(K_{X}+\Gamma\cdot\Gamma)=-2m$,
thus $p_{a}(\Gamma)=0$ and $m=1$). By \cite[Theorem 2 and Remark 3]{cheltsov},
$X$ has rational singularities and hence is $\mathbb{Q}$-factorial
by \cite[Proposition 17.1]{lipman}.
\end{proof}

We next turn to the case when the Picard number is at least two. In \cite[Lemma 12]{lz16}, this is done by running MMP, which is not yet available in positive characteristic in general. Nevertheless, the following lemma, which is later used to prove the base-point-freeness of certain line bundles, serves as a substitute at least for the purpose of this section.

\begin{lem} \label{lem:basept}
Let $(Y,\Delta)$ be a strongly $F$-regular pair and $D$ a nef divisor such that $D-(K_Y+\Delta)$ is nef and big. Suppose that $y\in Y$ is contained in the stable base locus of $D$, then there exists a positive dimensional subvariety $V\subseteq Y$ containing $y$ such that $D|_V$ is numerically trivial.
\end{lem}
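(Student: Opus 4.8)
The plan is to transpose to positive characteristic the standard argument that bounds the base locus of an \emph{adjoint} divisor. Writing $D=K_Y+\Delta+N$ with $N:=D-(K_Y+\Delta)$ nef and big, one tries, for $m\gg 0$, to produce a global section of $\cO_Y(mD)$ not vanishing at $y$; the role played over $\bC$ by multiplier ideals and Nadel vanishing is here played by test ideals together with a Frobenius vanishing theorem --- namely a strengthening of Lemma \ref{lem:vanishing} asserting that $H^i(Y,\tau(Y,\Delta+\Gamma)\otimes\cO_Y(mD))=0$ for $i>0$ whenever $mD-(K_Y+\Delta+\Gamma)$ is nef and big (proved by the same trace-map and Serre-vanishing mechanism) --- following the circle of ideas in \cite{bpf}. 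I would run this as an induction on $n=\dim Y$. The case $n=1$ is immediate: a nef divisor of positive degree on a (normal projective) curve is ample and base point free, so if $y$ lies in the stable base locus $\mathbf{B}(D)$ then $D\equiv 0$ and $V=Y$ works. Finally, the conclusion is equivalent to the assertion that, if no curve $C\ni y$ satisfies $(D\cdot C)=0$, then $y\notin\mathbf{B}(D)$; so I may assume $(D\cdot C)>0$ for every curve $C$ through $y$, and argue by contradiction that $y\notin\mathbf{B}(D)$.

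Under that assumption, by Kodaira's lemma write $N\sim_\bQ A+G$ with $A$ ample and $G\ge 0$ having arbitrarily small coefficients, so $D-(K_Y+\Delta+\lambda G)$ is ample for $0<\lambda<1$. Since $N$ is big and $y$ meets every curve through it positively, one can find --- after possibly replacing $N$ by $N+\epsilon D$ and passing to a large multiple --- an effective $\bQ$-divisor $\Gamma$ that is singular enough at $y$ for $(Y,\Delta+\Gamma)$ to fail to be strongly F-regular there, i.e.\ $\mathfrak a:=\tau(Y,\Delta+\Gamma)$ is a proper ideal with $y$ in its cosupport, and chosen so that $mD-(K_Y+\Delta+\Gamma)$ is still nef and big for $m\gg 0$. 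After the usual tie-breaking perturbation one may assume the cosupport of $\mathfrak a$ has a single irreducible component $V\ni y$ and is minimal. The Frobenius vanishing then gives $H^1(Y,\mathfrak a\otimes\cO_Y(mD))=0$, hence a surjection $H^0(Y,\cO_Y(mD))\twoheadrightarrow H^0(Y,(\cO_Y/\mathfrak a)\otimes\cO_Y(mD))$; if $V=\{y\}$ this already yields a section not vanishing at $y$, so $y\notin\mathbf{B}(D)$, contradiction.

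Hence $\dim V\ge 1$, and the surjection reduces the problem to $V$. By F-adjunction (see \cite{ss}) there is an effective $\bQ$-divisor $\Delta_V$ on (the normalization of) $V$ with $(V,\Delta_V)$ strongly F-regular and $D|_V-(K_V+\Delta_V)$ nef; if it is moreover big, the inductive hypothesis applied to $(V,\Delta_V,D|_V,y)$ produces a positive dimensional $V'\subseteq V$ through $y$ with $D|_{V'}\equiv 0$, which is the subvariety we want. If it is not big, then $V$ lies in the non-big locus of the relevant adjoint class for every admissible choice above; a Nakamaye-type description of that locus then forces $D|_V$ itself to be degenerate, and iterating the whole construction on $V$ in place of $Y$ --- which terminates after at most $n$ steps, since the dimension strictly drops and on a curve a nef divisor with a base point is numerically trivial --- again exhibits a positive dimensional subvariety of $V$ on which $D$ restricts to a numerically trivial class.

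I expect this last descent to be the main obstacle: one must organize the induction so that at every stage either a section lifts (and we win) or we pass to a strictly smaller subvariety on which $D|_V$ is still nef and the restricted adjoint class is still nef and big, i.e.\ control the loss of bigness of $N$ under successive restriction to non-F-regular centers. This is exactly where the hypothesis that $D-(K_Y+\Delta)$ is nef and big is indispensable: it excludes Mumford-type nef-but-not-big divisors $D$ --- for which $D-(K_Y+\Delta)$ is never nef and big over any strongly F-regular pair --- which would otherwise contradict the statement. The remaining ingredients (Kodaira's lemma, the existence of sufficiently singular members of $|mN|$ at $y$ when $N$ is big, F-adjunction for strongly F-regular pairs, and the test-ideal strengthening of Lemma \ref{lem:vanishing}) are routine; and the case where $D$ itself is not big requires no new idea, since the construction then stalls at the first step and directly produces $V$.
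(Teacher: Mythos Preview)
Your ingredients are the right ones --- test ideals, the trace-map vanishing $H^i(Y,\tau(Y,\Delta+\Gamma)\otimes\cO_Y(mD))=0$ when $mD-(K_Y+\Delta+\Gamma)$ is nef and big, and non-F-pure centers --- and the paper likewise defers the argument to \cite{bpf}. The structural difference is in how one reaches an \emph{isolated} center. You restrict to a minimal center $V$, invoke F-adjunction to get a strongly F-regular pair $(V,\Delta_V)$ with $D|_V-(K_V+\Delta_V)$ nef, and induct on $\dim V$. The paper (following \cite[Theorem 3.7]{bpf}) never leaves $Y$: one builds $\bQ$-divisors $D_1,\ldots,D_n$ on $Y$ and coefficients $t_i(e)$ so that the non-F-pure locus of $(Y,\Delta+\sum t_i(e)D_i)$ near $y$ is the single point $\{y\}$, after which the lifting argument of \cite[Theorem 1.1]{bpf} produces the section.

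The descent you outline has two genuine gaps, both of which you partly anticipate. First, the F-adjunction step is not available in the form you need: \cite{ss} does not provide, for a non-divisorial non-F-pure center $V$, a $\Delta_V$ making $(V,\Delta_V)$ strongly F-regular with $(K_Y+\Delta+\Gamma)|_V\sim_\bQ K_V+\Delta_V$; subadjunction of that strength is delicate already over $\bC$ and is not established in this generality in positive characteristic. Second, bigness of $D|_V-(K_V+\Delta_V)$ is not preserved under restriction, and your ``Nakamaye-type'' fallback is not a proof. The cutting-down procedure in \cite{bpf} sidesteps both issues: the hypothesis that no positive-dimensional $V\ni y$ has $D|_V\equiv 0$ is precisely what allows one, at each stage, to find a further divisor $D_{i+1}$ (in a suitable $|mD|_\bQ$) whose multiplicity at $y$ is large relative to its generic multiplicity along the current center, so that adjoining it strictly shrinks the non-F-pure locus --- no restriction to $V$, no adjunction on $V$. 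The fix is therefore not to rescue the induction on $\dim V$ but to replace it by the direct construction of an isolated center on $Y$ as in \cite[\S 3]{bpf}.
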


\begin{proof}
This is indeed a consequence of the arguments in \cite[\S 3-4]{bpf}. Namely, if $y\in Y$ is not contained in any positive dimensional subvariety $V\subseteq Y$ such that $D|_V$ is numerically trivial, then the same argument as in \cite[Theorem 3.7]{bpf} creates a $\bQ$-divisor $D^{(e)}=\sum_{i=1}^n t_i(e)D_i$ and an isolated non-$F$-pure center $W$ supported at $y$ for which the proof of \cite[Theorem 1.1]{bpf} can be used to show that $y$ is not a base point of $|mD|$ for $m\gg0$.
\end{proof}

We now briefly explain the idea for classifying varieties $X$ containing a divisor $D\cong\bP^{n-1}$ such that $\rho(X)\ge2$ and $-(K_X+D)$ is ample. Instead of running the MMP, we consider divisors of the form $L_\lambda=-(K_X+\lambda D)$ and hope that for some $\lambda$, the corresponding divisor $L_\lambda$ defines the contraction of the extremal ray we want. A natural idea is to take the largest $\lambda$ such that $L_\lambda$ is nef. To make the argument work, we need to show that $\lambda\in\bQ$ and that $L_\lambda$ is semiample. Once this is done, it is quite straightforward to finish the classification.

For the next couple lemmas, we introduce the following
notations. Let $D$ be a prime divisor on $X$, we define 
\[
\rho(X,D):=\mathrm{rank}\,\mathrm{Im}(\mathrm{Pic}(X)\rightarrow\mathrm{Pic}(D))
\]
If in addition $D$ is Cartier and $L$ is an ample line bundle on $X$,
we define
\[
\epsilon(L,D)=\sup\{t\,|\,L-tD\;\mathrm{is}\;\mathrm{ample}\}
\]
and let $s(L,D)$ be the largest integer $s\ge 0$ such that $(L-sD)|_{D}$
is base point free and $H^{0}(X,L-sD)\rightarrow H^{0}(D,L-sD|_{D})$
is surjective. By convention, we set $s(L,D)=-1$ if such integer $s$ doesn't exists.

\begin{lem} \label{lem:s(l,d)}
Let $L$ be an ample line bundle on $X$ and $D$ a prime Cartier divisor. Then for all $m\ge1$ we have
\[
\frac{s(mL,D)}{m}\le\epsilon(L,D)=\lim_{m\rightarrow\infty}\frac{s(mL,D)}{m}
\]
\end{lem}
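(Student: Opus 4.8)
The plan is to prove the two assertions separately: the uniform inequality $s(mL,D)/m\le\epsilon(L,D)$ for every $m\ge 1$, and the lower estimate $\liminf_{m\to\infty}s(mL,D)/m\ge\epsilon(L,D)$; together they force the limit to exist and equal $\epsilon(L,D)$. I will freely use two elementary facts. First, $\epsilon(mL,D)=m\,\epsilon(L,D)$, since $mL-tD$ is ample iff $L-(t/m)D$ is. Second, $\{t:L-tD\text{ is ample}\}$ is a nonempty interval with supremum $\epsilon(L,D)$, so $L-tD$ is ample for every $t\in[0,\epsilon(L,D))$, and moreover $\epsilon(L,D)=\sup\{t:L-tD\text{ is nef}\}$: the nef locus is also an interval, and any $t$ in it exceeding $\epsilon(L,D)$ would, together with the ample class $L$, make $L-t'D$ ample for all $t'<t$ by convexity, contradicting the definition of $\epsilon(L,D)$. (That $s(mL,D)$ is a well-defined integer I take for granted; for $m\gg 0$ it is clear since $s=0$ is then admissible by Serre's theorems, and any admissible $s$ makes $mL-sD$ effective, hence bounded above.)

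For the upper bound, put $s=s(mL,D)$ and $M=\cO_X(mL-sD)$. By hypothesis $M|_D$ is globally generated and $H^0(X,M)\to H^0(D,M|_D)$ is surjective. Since $M|_D$ has, at every point $x\in D$, a global section generating its stalk at $x$, lifting such a section produces $\sigma\in H^0(X,M)$ nonvanishing at $x$; hence the base locus of $|M|$ is disjoint from $D$ (in particular $H^0(X,M)\neq 0$, so $|M|$ is nonempty and its base locus is a proper closed subset). I claim $mL-sD$ is nef. Let $C\subseteq X$ be any curve. If $C\subseteq D$, then $(mL-sD)\cdot C=(M|_D)\cdot C\ge 0$ as $M|_D$ is globally generated. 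If $C\not\subseteq D$ and $C\not\subseteq\mathrm{Bs}\,|M|$, pick $G\in|M|$ with $C\not\subseteq\mathrm{Supp}(G)$; then $mL-sD\sim G\ge 0$ and $(mL-sD)\cdot C=G\cdot C\ge 0$. If $C\not\subseteq D$ but $C\subseteq\mathrm{Bs}\,|M|$, then $C\cap D=\emptyset$, so $D\cdot C=0$ and $(mL-sD)\cdot C=mL\cdot C>0$. Thus $mL-sD$ is nef, so $s\le\sup\{t:mL-tD\text{ is nef}\}=\epsilon(mL,D)=m\,\epsilon(L,D)$. This gives the inequality and, letting $m\to\infty$, also $\limsup_m s(mL,D)/m\le\epsilon(L,D)$.

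For the lower estimate we may assume $\epsilon(L,D)>0$ (otherwise $s(mL,D)\ge 0$ for $m\gg 0$ already suffices). Fix a rational $t=a/b$ with $0<t<\epsilon(L,D)$ and $a,b>0$, so $A:=bL-aD$ is ample. For given $m$ write $m=bk+r$ with $0\le r<b$ and set $s=\lfloor tm\rfloor=ak+r'$ where $r'=\lfloor ar/b\rfloor$; then $mL-sD=kA+(rL-r'D)$, and the correction $rL-r'D$ ranges over a fixed finite set of divisor classes as $(r,r')$ varies. By Serre's theorem and Serre vanishing, uniformly over this finite set, for all sufficiently large $k$ (equivalently $m$) the divisor $kA+(rL-r'D)$ is very ample and $H^1\bigl(X,\cO_X(kA+(rL-r'D)-D)\bigr)=0$. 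The first makes $(mL-sD)|_D$ base point free; the second, via $0\to\cO_X(mL-sD-D)\to\cO_X(mL-sD)\to\cO_D\bigl((mL-sD)|_D\bigr)\to 0$, makes $H^0(X,mL-sD)\to H^0(D,(mL-sD)|_D)$ surjective. Hence $s(mL,D)\ge\lfloor tm\rfloor\ge tm-1$ for $m\gg 0$, so $\liminf_m s(mL,D)/m\ge t$; letting $t\uparrow\epsilon(L,D)$ and combining with the upper bound gives $\lim_{m\to\infty}s(mL,D)/m=\epsilon(L,D)$.

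The only step needing a genuine idea is the nefness of $mL-s(mL,D)D$ in the upper bound, and in particular the observation that the two defining conditions of $s(mL,D)$ together force the base locus of $|mL-sD|$ to avoid $D$: that is exactly what makes the three-case analysis work, treating curves inside $D$ via a globally generated restriction and curves in the base locus via $D\cdot C=0$. Everything else — the manipulations with $\epsilon$, and the uniform application of Serre's theorems over a finite set of correction classes — is routine; the few degenerate possibilities (e.g. $M|_D\cong\cO_D$, or $D$ non-reduced or disconnected) do not affect the argument.
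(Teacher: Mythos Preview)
Your proof is correct and follows essentially the same approach as the paper. For the upper bound the paper argues by contradiction (if $(mL-sD\cdot C)<0$ then $(D\cdot C)>0$, so $C$ meets $D$ at a point where some section of $mL-sD$ does not vanish), which is a slightly shorter packaging of your three-case analysis; for the lower bound the paper isolates your ``finite set of correction classes plus Serre'' step as a separate Fujita-type lemma and then quotes it, whereas you prove it inline.
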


\begin{proof}
The proof is similar to that of the analogous statement for Seshadri
constants (where $D$ is the exceptional divisor of a blow up). We
first prove the inequality $\frac{s(mL,D)}{m}\le\epsilon(L,D)$. Let
$s=s(mL,D)$, it suffices to show that $mL-sD$ is nef. Suppose it
is not, then there exists a curve $C\subseteq X$ such that $(mL-sD\cdot C)<0$.
Since $L$ is ample, we have $(D\cdot C)>0$ and therefore, $C$ intersects
$D$. Choose $x\in C\cap D$. By the definition of $s(L,D)$, there
exists a section $u\in H^{0}(X,mL-sD)$ that does not vanish at $x$.
But this implies $(mL-sD\cdot C)\ge0$, a contradiction.

Now let $\lambda$ be any rational number such that $\lambda<\epsilon(L,D)$.
We will show $s(mL,D)\ge\left\lfloor \lambda m\right\rfloor $ for
$m\gg0$, thus proving the equality part of the lemma. To this end
fix $m\gg0$ and let $s=\left\lfloor \lambda m\right\rfloor $. By
Lemma \ref{lem:fujita}, $mL-sD$ is very ample and $H^{1}(X,mL-(s+1)D)=0$.
Therefore, $(mL-sD)|_{D}$ is base point free and by the long exact
sequence of cohomology, $H^{0}(X,mL-sD)\rightarrow H^{0}(D,(mL-sD)|_{D})$
is surjective. Thus $s(mL,D)\ge\left\lfloor \lambda m\right\rfloor $
and we are done.
\end{proof}

Recall the following Fujita-type result that is used in the above proof (it will also be used later).

\begin{lem} \label{lem:fujita}
Let $L$ be an ample line bundle on $X$ and let $D$ be a
Cartier divisor. Let $\lambda>0$ be such that $L-\lambda D$ is still
ample and let $m,s\ge0$ be integers such that $s\le\lambda m$. Let
$\mathcal{F}$ be a coherent sheaf on $X$. Then for $m\gg0$, $mL-sD$
is very ample and $H^{1}(X,\mathcal{F}(mL-sD))=0$. 
\end{lem}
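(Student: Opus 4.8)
Here is how I would prove Lemma~\ref{lem:fujita}.

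\medskip

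\emph{Overall plan.} The one real issue is uniformity in $s$: we must bound $m$ independently of $s$, and the tool that does this is \emph{Fujita's vanishing theorem} (plain Serre vanishing would give a bound depending on the nef divisor one extracts from $mL-sD$, hence on $s$). So the strategy is, for every admissible $s$, to exhibit $mL-sD$ as (a \emph{fixed} ample divisor) $+$ (a nef divisor), and then invoke Fujita vanishing for the finitely many fixed ample divisors that occur. Throughout I treat $L$ as an ample Cartier divisor, as in the intended application.

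\emph{Reduction and the key identity.} Since $\{t\in\bR:L-tD\text{ is ample}\}$ is open and contains $\lambda$, I would first pass to a \emph{rational} $\lambda_0>\lambda$ with $L-\lambda_0D$ still ample; as $m>0$, any pair with $s\le\lambda m$ also satisfies $s\le\lambda_0m$, so it suffices to treat $\lambda_0$. Writing $\lambda_0=a/b$ with $a,b\in\bZ_{>0}$ and setting $D_2:=bL-aD=b(L-\lambda_0D)$, an ample Cartier divisor, the elementary identity
\[
mL-sD=\Big(m-\tfrac{bs}{a}\Big)L+\tfrac{s}{a}\,D_2
\]
exhibits $mL-sD$ as a non-negative combination of the ample divisors $L$ and $D_2$ whenever $bs\le am$, and the two weights satisfy $\big(m-\tfrac{bs}{a}\big)+b\cdot\tfrac{s}{a}=m$, so $\max\{m-\tfrac{bs}{a},\,b\cdot\tfrac{s}{a}\}\ge m/2$. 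Hence, for any fixed integer $c\ge0$: if $m\ge2bc$, then either $(m-c)L-sD=\big(m-\tfrac{bs}{a}-c\big)L+\tfrac{s}{a}D_2$ is nef (when $m-\tfrac{bs}{a}\ge m/2\ge c$), or $mL-sD-cD_2=\big(m-\tfrac{bs}{a}\big)L+\big(\tfrac{s}{a}-c\big)D_2$ is nef (when $\tfrac{s}{a}\ge m/(2b)\ge c$); in either case the relevant nef divisor is Cartier.

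\emph{Cohomology vanishing.} Let $n=\dim X$, fix a very ample divisor $B$ on $X$, and set $\mathcal{G}:=\cF\oplus\bigoplus_{j=2}^{n+1}\cO_X(-jB)$. Apply Fujita's vanishing theorem to $\mathcal{G}$ with respect to $L$ and to $D_2$: there is an integer $c_0$ with $H^i(X,\mathcal{G}(c_0L+N))=H^i(X,\mathcal{G}(c_0D_2+N))=0$ for all $i>0$ and all nef Cartier divisors $N$. Then for every $m\ge2bc_0$ and every integer $s$ with $0\le s\le\lambda_0m$, the dichotomy above writes $mL-sD$ as $c_0L$ plus a nef Cartier divisor, or as $c_0D_2$ plus a nef Cartier divisor, and in either case Fujita's theorem applied to $\mathcal{G}$ gives
\[
H^i\big(X,\cF(mL-sD)\big)=0\quad\text{and}\quad H^i\big(X,\cO_X(mL-sD-jB)\big)=0
\]
for all $i>0$ and all $2\le j\le n+1$. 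The case $i=1$ of the first vanishing is the asserted $H^1(X,\cF(mL-sD))=0$.

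\emph{Very ampleness.} Set $M:=mL-sD-B$. The second batch of vanishings says $H^i(X,\cO_X(M-iB))=H^i(X,\cO_X(mL-sD-(i+1)B))=0$ for $1\le i\le n$ (and automatically for $i>n$), i.e.\ $\cO_X(M)$ is $0$-regular with respect to $B$; by Castelnuovo--Mumford regularity it is globally generated. Therefore $\cO_X(mL-sD)=\cO_X(B)\otimes\cO_X(M)$ is very ample, being the tensor of a very ample and a globally generated line bundle. I expect the only genuine obstacle to be the uniformity in $s$ flagged at the start, which Fujita vanishing resolves; the rationalization of $\lambda$, the convex-combination identity, and the passage from $0$-regularity to global generation and then to very ampleness are all routine.
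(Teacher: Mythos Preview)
Your proof is correct and follows essentially the same strategy as the paper: rationalize $\lambda$, decompose $mL-sD$ along the two ample directions $L$ and $L-\lambda D$, and invoke Fujita vanishing to get uniformity in $s$. The paper packages the decomposition slightly differently---it takes very ample multiples $H_1=NL$, $H_2=N(L-\lambda D)$ and observes that the integer points $(m,s)$ with $0\le s\le\lambda m$ fall into finitely many cosets modulo the sublattice generated by $H_1,H_2$, so $mL-sD=L_i+a_1H_1+a_2H_2$ with $\max\{a_1,a_2\}\to\infty$---whereas you use the rational convex identity and a dichotomy; and for very ampleness the paper simply cites Fujita's Corollary~3 while you spell it out via Castelnuovo--Mumford regularity. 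These are cosmetic differences; the underlying idea is identical.
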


\begin{proof}
We may assume $\lambda\in\mathbb{Q}$ (otherwise enlarge $\lambda$
slightly). Choose sufficiently large and divisible $N$ such tha $H_{1}=NL$
and $H_{2}=N(L-\lambda D)$ are both very ample. Then since $\lambda$
is rational, there exists finitely many line bundles $L_{i}$ such
that $mL-sD=L_{i}+a_{1}H_{1}+a_{2}H_{2}$ for some $i$ and some integers
$a_{1},a_{2}\ge0$. As $m\gg0$ we have $\max\{a_{1},a_{2}\}\gg0$,
thus the lemma follows from \cite[Theorem 2 and Corollary 3]{fujita}.
\end{proof}

\begin{lem} \label{lem:lambda}
Let $(X,\Delta)$ be a globally $F$-regular pair
and $D$ a prime Cartier divisor on $X$ such that $L=-(K_{X}+\Delta+D)$
is ample. Let $\lambda=\epsilon(L,D)$. Assume either $\rho(X,D)=1$
or $(L-\lambda D)|_{D}$ is ample. Then $\lambda\in\mathbb{Q}$ and
$L-\lambda D$ is semiample.
\end{lem}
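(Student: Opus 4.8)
The plan is to get the rationality of $\lambda$ from the Fano-type geometry of $X$ and then to prove the semiampleness of $N:=L-\lambda D$ using Lemma~\ref{lem:basept}; the semiampleness is the substantive part. First I would observe that $N$ is nef — it is the limit of the ample divisors $L-tD$ for $t\in[0,\lambda)$ — but not ample, since $\lambda=\epsilon(L,D)$. For rationality I would invoke that a globally F-regular variety is of Fano type \cite{ss}, so by the cone theorem $\overline{NE}(X)$, hence $\mathrm{Nef}(X)$, is rational polyhedral; as $\lambda$ is the parameter at which the rational ray $L+\bR_{\ge0}(-D)$ leaves $\mathrm{Nef}(X)$, it is rational. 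Equivalently there is an irreducible curve $C$ with $(N\cdot C)=0$, and then $(D\cdot C)>0$ (otherwise $(L\cdot C)=0$) and $\lambda=(L\cdot C)/(D\cdot C)$; this description will be convenient below. (Alternatively one can adapt the argument of \cite[Proposition~1.1]{bs}, using Lemma~\ref{lem:vanishing} in place of Kodaira vanishing.)

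Assuming now $\lambda\in\bQ_{>0}$, I would fix $a\in\bZ_{>0}$ with $a\lambda\in\bZ_{\ge1}$ and set $D_0:=aL-a\lambda D$, a Cartier divisor on the ray of $N$, so that $N$ is semiample exactly when $kD_0$ is base point free for some $k$. Two inputs are needed. First, $D_0$ is nef and $D_0-(K_X+\Delta)=(a+1)\bigl(L-\tfrac{a\lambda-1}{a+1}D\bigr)$ is ample, because $0\le\tfrac{a\lambda-1}{a+1}<\lambda$. Second, $\mathrm{Bs}(|mD_0|)\cap D=\emptyset$ for all sufficiently divisible $m$: since $mD_0+L$ is ample, Lemma~\ref{lem:vanishing} gives $H^1(X,\cO_X(mD_0-D))=0$ and hence a surjection $H^0(X,mD_0)\twoheadrightarrow H^0(D,mD_0|_D)$; and $D_0|_D=aN|_D$ is either ample — so $mD_0|_D$ is base point free for $m\gg0$ — or, when $\rho(X,D)=1$, numerically trivial and hence (as $\rho(X,D)=1$) torsion, so that $mD_0|_D\cong\cO_D$ for suitable $m$; in either case lifting sections removes the base locus along $D$. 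This second input is the only place the hypothesis that $\rho(X,D)=1$ or that $(L-\lambda D)|_D$ is ample gets used.

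Finally I would run the base point free argument. As a globally F-regular pair is strongly F-regular, Lemma~\ref{lem:basept} applies to the nef divisor $D_0$: any point not lying on a positive-dimensional $V$ with $N|_V\equiv0$ avoids $\mathrm{Bs}(|mD_0|)$ for $m\gg0$. The opens $X\setminus\mathrm{Bs}(|m!\,D_0|)$ ascend on the noetherian space $X$, so for a single $k$ — chosen also divisible enough for the second input — one gets $\mathrm{Bs}(|kD_0|)\subseteq\mathrm{Null}(N):=\bigcup\{V:\dim V\ge1,\ N|_V\equiv0\}$ and $\mathrm{Bs}(|kD_0|)\cap D=\emptyset$. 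If $\mathrm{Bs}(|kD_0|)$ were nonempty, pick $y$ in it, lying on some positive-dimensional $V$ with $N|_V\equiv0$; then $V\not\subseteq D$ (as $y\notin D$), and since $L|_V$ is ample while $(L-\lambda D)|_V\equiv0$, the class $D|_V\equiv\lambda^{-1}L|_V$ is ample on $V$, so $D\cap V\ne\emptyset$. Because $kD_0|_V$ is numerically trivial, $h^0(V,kD_0|_V)\le1$: either no section of $kD_0$ on $X$ is nonzero anywhere on $V$, whence $V\subseteq\mathrm{Bs}(|kD_0|)$ and $\emptyset\ne V\cap D\subseteq\mathrm{Bs}(|kD_0|)\cap D=\emptyset$, a contradiction; or $kD_0|_V\cong\cO_V$ and its nowhere-vanishing section lifts to $X$, so $\mathrm{Bs}(|kD_0|)\cap V=\emptyset$, contradicting $y\in V$. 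Hence $\mathrm{Bs}(|kD_0|)=\emptyset$ and $N$ is semiample.

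The hard part will be the semiampleness, and within it the step of upgrading Lemma~\ref{lem:basept} — which only confines the stable base locus of $D_0$ to $\mathrm{Null}(N)$ — to honest base point freeness. This rests on the two auxiliary points above: arranging a Cartier representative $D_0$ on the ray of $N$ for which $D_0-(K_X+\Delta)$ is not merely nef and big but ample, and recognizing that a positive-dimensional $V$ with $N|_V\equiv0$ and $V\not\subseteq D$ is forced to meet $D$, since then $D|_V\equiv\lambda^{-1}L|_V$ is ample. By contrast, the rationality of $\lambda$ is quick once $X$ is known to be of Fano type.
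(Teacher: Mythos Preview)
Your semiampleness argument is essentially the paper's: use Lemma~\ref{lem:vanishing} to lift sections through $D$, so the stable base locus misses $D$; then Lemma~\ref{lem:basept} confines it to subvarieties $V$ with $N|_V\equiv 0$, which are forced to meet $D$ because $D|_V\equiv\lambda^{-1}L|_V$ is ample. The paper cuts $V$ down to a curve first, but your dichotomy (either every section vanishes on $V$, or a nonzero section of a numerically trivial line bundle on a projective integral $V$ is nowhere vanishing) is the same mechanism.

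The rationality argument, however, has a genuine gap in positive characteristic. You invoke that a globally F-regular variety is of Fano type and then appeal to the cone theorem to get $\mathrm{Nef}(X)$ rational polyhedral, and ``equivalently'' the existence of a curve $C$ with $(N\cdot C)=0$. Neither step is available in characteristic $p>0$ in arbitrary dimension: the Schwede--Smith result \cite{ss} does not give you a rational polyhedral Mori cone, and a nef non-ample $\bR$-divisor need not vanish on an actual curve class (only on a limit in $\overline{NE}(X)$). The paper therefore does \emph{not} treat rationality as quick; it proves it directly by the $s(mL,D)$ machinery of Lemmas~\ref{lem:s(l,d)} and~\ref{lem:fujita}. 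Assuming $\lambda\notin\bQ$, one reduces (via $\rho(X,D)=1$) to $(L-\lambda D)|_D$ ample, picks $\mu>\lambda$ with $(L-\mu D)|_D$ still ample, and shows $s(mL,D)\ge\lfloor\lambda(m+1)\rfloor$ for large divisible $m$: Lemma~\ref{lem:fujita} makes $(mL-sD)|_D$ very ample, and since $s=\lfloor\lambda(m+1)\rfloor<\lambda(m+1)$ (irrationality!), $(m+1)L-sD$ is ample, so Lemma~\ref{lem:vanishing} kills $H^1$ and sections lift. This contradicts $s(mL,D)\le\lambda m$ from Lemma~\ref{lem:s(l,d)}. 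This is precisely the adaptation of \cite[Proposition~1.1]{bs} you allude to parenthetically; in characteristic $p$ it is the argument, not an alternative.
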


\begin{proof}
We first prove $\lambda\in\mathbb{Q}$. Suppose this is not the case.
If $\rho(X,D)=1$, then as $\lambda\not\in\mathbb{Q}$, $(L-\lambda D)|_{D}$
is nef but not numerically trivial, so we reduce to the case when
$(L-\lambda D)|_{D}$ is ample. Choose $\mu>\lambda$ such that $(L-\mu D)|_{D}$
is still ample. We claim that $s(mL,D)\ge\left\lfloor \lambda(m+1)\right\rfloor $
for sufficiently large and divisible $m$. To see this, let $m$ be
fixed and let $s=\left\lfloor \lambda(m+1)\right\rfloor $, then as
$m\gg0$ we have $s<\mu m$ and thus by Lemma \ref{lem:fujita}, $(mL-sD)|_{D}$
is very ample. Moreover, since $\lambda\not\in\mathbb{Q}$, we have
$s<\lambda(m+1)$, hence $mL-(s+1)D-(K_{X}+\Delta)\sim(m+1)L-sD$
is ample and as $X$ is globally $F$-regular, $H^{1}(X,mL-(s+1)D)=0$
by Lemma \ref{lem:vanishing}, thus $H^{0}(X,mL-sD)\rightarrow H^{0}(D,(mL-sD)|_{D})$
is onto. So $s(mL,D)\ge s=\left\lfloor \lambda(m+1)\right\rfloor $,
proving the claim. On the other hand, by Lemma \ref{lem:s(l,d)} we
have $\left\lfloor \lambda(m+1)\right\rfloor\le s(mL,D) \le\lambda m$
(for sufficiently divisible $m$). As $\lambda>0$ and $\lambda\not\in\mathbb{Q}$,
this is a contradiction.

Thus we have $\lambda\in\mathbb{Q}$. Let $M=L-\lambda D$. Under
either assumption of the lemma, $mM|_{D}$ is base point free for
sufficiently divisible $m$. We also have $H^{1}(X,mM-D)=0$ since
$X$ is globally $F$-regular and $mM-D-(K_{X}+\Delta)=mM+L$ is ample,
hence $H^{0}(X,mM)\rightarrow H^{0}(D,mM|_{D})$ is onto and the stable
base locus $B=\mathrm{Bs}(M)$ of $M$ is disjoint from $D$. On the
other hand, by Lemma \ref{lem:basept}, for any $x\in B$, there exists
a positive dimensional subvariety $C\subseteq X$ containing $x$
such that $M|_{C}$ is numerically trivial. By taking hyperplane sections
we may assume that $C$ is a curve. Clearly $C$ intersects $D$,
for otherwise $M|_{C}=L|_{C}$ is ample. Since $x\in B$ and $(M\cdot C)=0$,
we have $C\subseteq B$, but then $B\cap D$ contains $C\cap D$ and
in particular is nonempty, a contradiction. Thus $B=\emptyset$ and
$M$ is semiample.
\end{proof}

The next two lemmas are natural generalizations of \cite[Lemmas 4 and 7]{lz16} to pairs. We omit the proofs since the argument in \cite{lz16} works verbatim here.

\begin{lem}\label{lem:surfacecont}
Let $\pi:S\rightarrow T$ be a proper birational morphism between normal
surfaces and $\Delta$ an effective divisor on $S$. Let $C\subset S$ be a $K_{S}$-negative $\pi$-exceptional curve such that $C\not\subseteq\Supp(\Delta)$. Then $(-(K_{S}+\Delta)\cdot C)\le 1$, with equality if and only if $C$ is disjoint from $\Delta$ and $S$ has only Du Val singularities
along $C$.
\end{lem}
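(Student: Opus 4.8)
The plan is to reduce to a purely local computation near the exceptional curve $C$ and then analyze the two possible cases: $C^2 < 0$ (the only interesting one, since a $\pi$-exceptional curve on a surface has negative self-intersection when $C$ itself is contracted, but a priori $C$ could be a component of a larger contracted locus) and the degenerate possibilities. First I would recall that since $\pi$ is birational and $C$ is $\pi$-exceptional, $C$ is contracted to a point, so $(C^2)<0$; let $-a = (C^2)$ with $a \ge 1$. Adjunction on $S$ (using that $C$ lies in the smooth locus, so $C\cong\bP^1$ with $2p_a(C)-2 = (K_S+C\cdot C) = -2$) gives $(K_S\cdot C) = -2 - (C^2) = -2 + a$.

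Next I would write $\Delta = \sum b_i \Gamma_i$ with $b_i > 0$ and, since $C \not\subseteq \Supp(\Delta)$, compute $(\Delta\cdot C) = \sum b_i (\Gamma_i \cdot C) \ge 0$. Hence
\[
(-(K_S+\Delta)\cdot C) = 2 - a - (\Delta\cdot C) \le 2 - a \le 1,
\]
using $a \ge 1$. This already gives the inequality. For the equality case, equality forces $a = 1$ (so $(C^2) = -1$) and $(\Delta\cdot C) = 0$, which in turn forces $(\Gamma_i\cdot C) = 0$ for every component $\Gamma_i$ of $\Delta$; since $C$ lies in the smooth locus and each $\Gamma_i$ meeting $C$ would contribute positively to the intersection, this means $C$ is disjoint from $\Supp(\Delta)$. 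It remains to show that equality also forces $S$ to have only Du Val (i.e. rational double point) singularities along $C$. Here I would pass to the minimal resolution $\mu: \tilde S \to S$, let $\tilde C$ be the strict transform of $C$ (which maps isomorphically, as $C$ is in the smooth locus), and compare $K_S$ with $\mu^* K_S$: writing $K_{\tilde S} = \mu^* K_S + \sum a_j E_j$ with $a_j \le 0$ the discrepancies (all $\ge 0$ would be the Du Val condition, so I want to rule out $a_j < 0$ for $E_j$ meeting $\tilde C$). Intersecting with $\tilde C$ and using $(K_{\tilde S}\cdot \tilde C) = -2 - (\tilde C^2) \ge -2 - (C^2) = -1$ together with $(\mu^* K_S \cdot \tilde C) = (K_S \cdot C) = -1$, one gets $\sum a_j (E_j \cdot \tilde C) \ge 0$; since all $a_j \le 0$ and all $(E_j \cdot \tilde C) \ge 0$, every exceptional curve $E_j$ meeting $\tilde C$ must have discrepancy $a_j = 0$, i.e. the singularities of $S$ along $C$ are canonical, hence Du Val (rational double points) by the surface classification.

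The main obstacle I anticipate is the equality-case argument for the Du Val conclusion: one must be careful that "$S$ has only Du Val singularities along $C$" is about all singular points of $S$ lying on $C$, not just the generic behavior, so the discrepancy computation needs to account for the full exceptional configuration of $\mu$ over each such point and use that $\tilde C$ meets the exceptional locus. A clean way to handle this is to localize: shrink $S$ to an open neighborhood of a fixed singular point $q \in C$, so that $\mu$ is an isomorphism away from $q$; then $(\mu^* K_S \cdot \tilde C)$ is still controlled, and the negativity of the intersection form on the exceptional fibers of $\mu$ forces the discrepancy vector to be zero once the one inequality above is saturated. This is exactly the argument in \cite[Lemma 4]{lz16}, so in the write-up I would simply invoke that the same computation applies verbatim in positive characteristic, noting that adjunction, the negative-definiteness of resolution intersection matrices, and the classification of canonical surface singularities as Du Val are all characteristic-free.
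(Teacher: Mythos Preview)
Your argument for the inequality contains a genuine gap: you assume from the outset that $C$ lies in the smooth locus of $S$, so that the classical adjunction formula $(K_S+C)\cdot C=2p_a(C)-2=-2$ applies and $(C^2)$ is a negative \emph{integer}. Neither is part of the hypothesis. The curve $C$ may well pass through singular points of $S$ --- indeed, the equality clause is precisely a constraint on those singularities --- and on a normal surface Mumford's self-intersection $(C^2)$ is only a negative rational number, so the step ``$a\ge1$'' is unjustified. For instance, if $C$ passes simply through a single $A_1$ point and its strict transform $\tilde C$ on the minimal resolution is a $(-1)$-curve, one computes $(C^2)=-\tfrac12$ while $(-K_S\cdot C)=1$; your formula would instead give $(-K_S\cdot C)=2-\tfrac12=\tfrac32$.

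The remedy is to pass to the minimal resolution $\mu:\tilde S\to S$ \emph{before} invoking adjunction, as you already do in your equality-case sketch. Write $K_{\tilde S}=\mu^*K_S+\sum a_jE_j$ with $a_j\le0$. Then $K_{\tilde S}\cdot\tilde C=K_S\cdot C+\sum a_j(E_j\cdot\tilde C)\le K_S\cdot C<0$, and since $\tilde C$ is $(\pi\circ\mu)$-exceptional on the smooth surface $\tilde S$, adjunction now legitimately gives $\tilde C\cong\bP^1$ and $K_{\tilde S}\cdot\tilde C=-2-\tilde C^2\ge-1$. Rearranging, $(-K_S\cdot C)\le -K_{\tilde S}\cdot\tilde C\le1$, and together with $(\Delta\cdot C)\ge0$ the desired inequality follows. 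Equality forces $(\Delta\cdot C)=0$, $\tilde C^2=-1$, and $a_j=0$ for every $E_j$ meeting $\tilde C$; using $K_{\tilde S}\cdot E_j=\sum_i a_i(E_i\cdot E_j)\ge0$ and the connectedness of the exceptional fiber over each singular point of $S$ on $C$, this propagates to $a_j=0$ for all such $E_j$, i.e.\ the singularities along $C$ are Du Val. This is the route of \cite[Lemma~4]{lz16}, which the paper simply cites.
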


\begin{lem}\label{lem:birlocal}
Let $g:Y\rightarrow Z$ be a proper birational morphism between normal varieties and $\Delta$ an effective divisor on $Y$. Let $D$ be a smooth $g$-ample Cartier divisor on $Y$ such that $-(K_Y+\Delta+\lambda D)$ is $g$-nef for some $\lambda\ge1$. Assume that $Y$ is Cohen-Macaulay, $D\cap\Delta=\emptyset$ and $g|_{D}:D\rightarrow G=g(D)$ is an isomorphism, then $\lambda=1$, $\Ex(g)$ is disjoint from $\Delta$ and $Z$ is smooth along $G$.
\end{lem}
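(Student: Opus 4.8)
The plan is to run the proof of \cite[Lemma 7]{lz16} almost verbatim; what makes it go through in positive characteristic is that the one non-formal ingredient there --- an upper bound for the anticanonical degree of an exceptional curve --- is now available as Lemma \ref{lem:surfacecont}, which is characteristic-free. Two elementary reductions come first. Since $D$ is $g$-ample and $g$ is birational, $(D\cdot C)>0$ for every $g$-contracted curve $C$; hence every positive-dimensional fibre of $g$ meets $D$ (the restriction of $\mathcal{O}_Y(D)$ to such a fibre is ample, so cannot be trivial) and is therefore mapped into $G$, so $g(\Ex(g))\subseteq G$ and $\Ex(g)$ is exactly the union of these fibres. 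Also, a normal variety is regular along any smooth Cartier divisor on it, so $Y$ is smooth along $D$; and if $\Ex(g)=\emptyset$ then $g$ is an isomorphism and all the assertions are trivial, so we may assume $\Ex(g)\neq\emptyset$.

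The heart of the argument is a reduction to surfaces. Fixing a point $z_0\in g(\Ex(g))$ (the construction will be applied for every such $z_0$), I would cut $Z$ by $n-2$ general hyperplane sections through $z_0$ to get a surface $T\ni z_0$, and set $S=g^{-1}(T)$ and $g_S=g|_S:S\to T$. Since $Y$ is Cohen-Macaulay and the $g^*H_i$ are Cartier, $S$ is Cohen-Macaulay; by standard Bertini-type arguments $S$ and $T$ are normal surfaces; $D_S=D|_S$ is a $g_S$-ample Cartier curve with $D_S\cap\Delta_S=\emptyset$ (where $\Delta_S=\Delta|_S$), the restriction $g_S|_{D_S}:D_S\to T\cap G$ is an isomorphism, and the fibre $g_S^{-1}(z_0)=g^{-1}(z_0)$ survives unchanged. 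By iterated adjunction $K_S=(K_Y+\sum g^*H_i)|_S$, and each $g^*H_i|_S=g_S^*(H_i|_T)$ is $g_S$-numerically trivial, so $-(K_S+\Delta_S+\lambda D_S)$ is again $g_S$-nef. Making this reduction rigorous --- chiefly the normality of $S$ and the faithful survival on $S$ of the fibre $g^{-1}(z_0)$ together with its incidences with $D$ and $\Delta$, despite our cutting through a point of the exceptional locus --- is the main obstacle; it is handled exactly as in \cite{lz16} and uses nothing about the characteristic.

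On the surface, let $E$ be an irreducible component of $g_S^{-1}(z_0)$, a $g_S$-contracted (hence rational) curve. As $D_S$ is $g_S$-ample we have $D_S\cap E\neq\emptyset$, which together with $D_S\cap\Delta_S=\emptyset$ forces $E\not\subseteq\Supp(\Delta_S)$; moreover $-(K_S+\Delta_S)=-(K_S+\Delta_S+\lambda D_S)+\lambda D_S$ is $g_S$-ample, so $E$ is $(K_S+\Delta_S)$-negative. Lemma \ref{lem:surfacecont} then gives $(-(K_S+\Delta_S)\cdot E)\le 1$, whereas $g_S$-nefness gives $(-(K_S+\Delta_S)\cdot E)\ge\lambda(D_S\cdot E)\ge\lambda\ge 1$, using $(D_S\cdot E)\ge 1$ since $D_S$ is a $g_S$-ample Cartier divisor. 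Hence every inequality is an equality: $\lambda=1$, $(D_S\cdot E)=1$, $(-(K_S+\Delta_S)\cdot E)=1$, and the equality clause of Lemma \ref{lem:surfacecont} shows that $E$ is disjoint from $\Delta_S$ and that $S$ has only Du Val singularities along $E$.

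It remains to assemble the conclusions. The equality $\lambda=1$ is immediate. For $\Ex(g)\cap\Delta=\emptyset$: every positive-dimensional fibre $F$ of $g$ lies over $G$, hence inside some such $S$, where $F\cap\Delta=F\cap\Delta_S=\bigcup_E(E\cap\Delta_S)=\emptyset$; since $\Ex(g)$ is the union of these fibres and $\Ex(g)\cap\Delta$ is closed, it is empty. For smoothness of $Z$ along $G$: over $Z\setminus g(\Ex(g))$ the morphism $g$ is an isomorphism and $Y$ is smooth along $D$, so $Z$ is smooth along $G\setminus g(\Ex(g))$; and at a point $z_0\in G\cap g(\Ex(g))$, passing to the minimal resolution $\mu:\hat{S}\to S$ --- which is crepant along $E$ because $S$ has only Du Val singularities there --- the strict transform of each component of $g_S^{-1}(z_0)$ is a $(-1)$-curve (a smooth rational curve $\hat{E}$ with $(-K_{\hat{S}}\cdot\hat{E})=(-K_S\cdot E)=1$), so the fibre of $g_S\circ\mu$ over $z_0$ is a connected, negative-definite configuration of smooth rational curves of self-intersection $-1$ or $-2$ containing a $(-1)$-curve, which (contract a $(-1)$-curve and induct) must contract to a smooth point. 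Hence $T$ is smooth at $z_0$, and as $T$ was a general surface section of $Z$ through $z_0$, $Z$ is smooth at $z_0$ as well.
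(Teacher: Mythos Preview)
Your proposal is correct and is exactly the argument the paper intends: the paper omits the proof, stating that the argument of \cite[Lemma 7]{lz16} works verbatim, and your write-up is precisely that argument with Lemma \ref{lem:surfacecont} supplying the characteristic-free input. (One triviality worth making explicit: Lemma \ref{lem:surfacecont} requires $E$ to be $K_S$-negative rather than $(K_S+\Delta_S)$-negative, but this follows immediately since $E\not\subseteq\Supp(\Delta_S)$ gives $(\Delta_S\cdot E)\ge 0$.)
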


We are ready to finish the second part of the classification of varieties containing the projective space as a smooth divisor.

\begin{lem} \label{lem:rho2}
Let $(X,\Delta)$ be a pair and $D\cong\mathbb{P}^{n-1}$ a prime
divisor contained in the smooth locus of $X$ such that $L=-(K_{X}+\Delta+D)$
is ample. Assume that $\rho(X)\ge2$ and $\Delta\cap D=\emptyset$.
Then $X$ is isomorphic to a $\mathbb{P}^{1}$-bundle $\mathbb{P}(\mathcal{O}\oplus\mathcal{O}(-d))$
over $\mathbb{P}^{n-1}$ for some $d\in\mathbb{Z}_{\ge0}$ and $D$ is a section.
\end{lem}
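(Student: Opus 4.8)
The plan is to imitate the characteristic-zero proof of \cite[Lemma 12]{lz16}, using Lemma \ref{lem:lambda} as a substitute for the Minimal Model Program. \emph{Step 1 (global F-regularity and a contraction).} I would first apply Lemma \ref{lem:F-reg} to the pair $(X,D+\Delta)$: here $-(K_X+D+\Delta)=L$ is ample, $D$ is a prime divisor in the smooth locus with $D\not\in\Supp(\Delta)$ since $\Delta\cap D=\emptyset$, the pair $(D,\Delta|_D)=(\bP^{n-1},0)$ is globally F-regular, and $L|_D=-K_D$ is ample; hence $(X,\Delta)$ is globally, and therefore strongly, F-regular. In particular $X$ is Cohen--Macaulay and of Fano type, and $\Pic(D)\cong\bZ$ gives $\rho(X,D)=1$. (For the applications $X$ is $\bQ$-factorial with $D$ Cartier; otherwise one first replaces $X$ by a small $\bQ$-factorialization, which preserves all the hypotheses, so I assume this.) Since $L$ is ample and $D$ is effective and nonzero, $0<\lambda:=\epsilon(L,D)<\infty$, so by Lemma \ref{lem:lambda} (applied with $\rho(X,D)=1$) we get $\lambda\in\bQ$ and that $M:=L-\lambda D=-(K_X+\Delta+(1+\lambda)D)$ is semiample. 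Let $\phi\colon X\to Z$ be the induced contraction onto a normal projective variety, with $M=\phi^*A$ for $A$ ample; as $M$ is nef but not ample, $\phi$ is not an isomorphism, and every $\phi$-contracted curve $C$ satisfies $0=M\cdot C=-(K_X+\Delta)\cdot C+\lambda(D\cdot C)$, forcing $D\cdot C>0$ by ampleness of $-(K_X+\Delta)=L+D$.

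\emph{Step 2 ($\phi$ is of fibre type and $Z\cong\bP^{n-1}$).} Suppose $\phi$ were birational. Writing $\cN_{D/X}=\cO_D(\delta)$, the line bundle $M|_D=\cO_D(n-\lambda\delta)$ is nef on $\bP^{n-1}$; were it trivial, $D$ would be $\phi$-exceptional while $\cN_{D/X}=\cO_D(n/\lambda)$ is nef, contradicting the negativity of a birational contraction, so $M|_D$ is ample and $\phi|_D$ is finite and birational onto a prime divisor of $Z$. Then Lemma \ref{lem:birlocal}, applied to $g=\phi$ (Cohen--Macaulay $X$, $D$ smooth $\phi$-ample with $D\cap\Delta=\emptyset$, and $-(K_X+\Delta+(1+\lambda)D)$ $\phi$-nef with $1+\lambda\ge1$), forces $1+\lambda=1$, contradicting $\lambda>0$; for $n=2$ one argues analogously on a surface section using Lemma \ref{lem:surfacecont}. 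Hence $\dim Z\le n-1$. If $\dim Z=0$ then $M\equiv0$, i.e.\ $D\equiv L/\lambda$ is ample; for $n\ge3$ an ample $\bP^{n-1}\subset X$ forces $\rho(X)=1$ (Lefschetz; cf.\ Lemma \ref{lem:picsurj}), contradicting $\rho(X)\ge2$, while $n=2$ is handled by the surface arguments, using Lemma \ref{lem:Q-factorial}. So $\dim Z=n-1$. No fibre has a $2$-dimensional component $W$: as $D$ is a section $D\cap W$ is finite, so $D|_W\equiv0$, whence $-(K_X+\Delta)|_W=(1+\lambda)D|_W\equiv0$, absurd since $-(K_X+\Delta)|_W$ is ample. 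Thus $\phi$ is equidimensional, hence flat ($X$ Cohen--Macaulay, $Z$ smooth).

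\emph{Step 3 (conclusion).} The general fibre $F$ is a curve with $M\cdot F=0$, so $-(K_X+\Delta)\cdot F=(1+\lambda)(D\cdot F)$; since (assuming $\phi$ separable, see below) $F$ is a smooth curve with trivial normal bundle, $K_X\cdot F=2g(F)-2$, so $(1+\lambda)(D\cdot F)\le2$, forcing $D\cdot F=1$, $\lambda\le1$ and $g(F)=0$. Thus $D$ is a section, $\phi|_D$ identifies $Z\cong\bP^{n-1}$, and $\phi$ is a conic bundle with $\omega_{X/Z}^{-1}$ relatively very ample of relative degree $2$; in particular $X$ embeds as a relative divisor in a $\bP^2$-bundle over $\bP^{n-1}$ and is Gorenstein. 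Since $D\cdot F'=1$ for every fibre $F'$, no $F'$ is a double line, and a fibre that were a line pair would have a component meeting $D$ once and of $-K_X$-degree $1$, forcing $L\cdot(\textrm{that component})\le0$ against ampleness of $L$; hence every fibre is a smooth conic and $\phi$ is a $\bP^1$-bundle. Writing $X=\bP(\cE)$ with $\cE=\phi_*\cO_X(D)$, the section $D$ realises $\cE$ as an extension of line bundles on $\bP^{n-1}$, which splits (automatically for $n=2$; and because $H^1(\bP^{n-1},\cO(m))=0$ for $n\ge3$). After twisting, $\cE\cong\cO\oplus\cO(-d)$ for some $d\ge0$ and $D$ is a section, as required.

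\emph{Main obstacle.} The crucial point is the absence of the MMP in positive characteristic: the birational alternative for $\phi$ must be excluded by hand, which is exactly what Lemma \ref{lem:birlocal} (resting ultimately on the base-locus bound Lemma \ref{lem:basept}) is for, together with the rationality and semiampleness statement Lemma \ref{lem:lambda}. A secondary, characteristic-$p$ difficulty is ruling out pathological fibres: one must verify that $\phi$ is separable, so that the general fibre is a smooth rational curve, and that no fibre is multiple or reducible (generic smoothness being unavailable); and the degenerate situation in which $D$ is itself ample, which can occur only for surfaces, requires a separate classical analysis of rational surfaces.
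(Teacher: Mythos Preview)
Your overall strategy---establish global F-regularity via Lemma \ref{lem:F-reg}, apply Lemma \ref{lem:lambda} to produce a semiample $M=L-\lambda D$ and a contraction, then rule out the birational case with Lemma \ref{lem:birlocal}---is exactly the paper's. The differences are in execution, and two of them matter.

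\textbf{The perturbation.} The paper does not apply Lemma \ref{lem:lambda} to $L$ directly; it first replaces $\Delta$ by $\Delta_1=\Delta+tH$ for a general ample $H$ and $0<t\ll1$, so that $L_1=-(K_X+\Delta_1+D)$ and $D$ are linearly independent in $\Pic(X)_\bQ$. This guarantees $M\not\equiv0$ and hence $\dim Z\ge1$, so your separate treatment of the case $\dim Z=0$ (with its vague appeal to ``surface arguments'' when $n=2$) becomes unnecessary. The paper then shows $M|_D$ is ample by a Hodge-index argument, which immediately gives $\dim Z=n-1$; your writeup jumps from ``$\dim Z\le n-1$ and $\dim Z\neq0$'' to ``$\dim Z=n-1$'' without justification. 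It also uses the surjection $H^0(X,mM)\to H^0(D,mM|_D)$ (from the vanishing $H^1(X,mM-D)=0$) to see that $g|_D$ is a \emph{closed embedding}, which is what Lemma \ref{lem:birlocal} actually requires; ``finite and birational onto its image'' is not enough unless you already know the image is normal.

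\textbf{The fibre analysis.} Your Step 3 conditions on generic smoothness and then worries (correctly) about separability in characteristic $p$. The paper avoids this completely. Once $g|_D:D\xrightarrow{\sim}Z\cong\bP^{n-1}$ is an isomorphism, every scheme-theoretic fibre $C$ meets $D$ in a single reduced point; since $D$ is $g$-ample, every component of $C$ meets $D$, so $C$ is irreducible and generically reduced, hence (as $X$ is CM and $Z$ smooth, so $g$ is flat with CM fibres) integral. Then the sequence $R^1g_*\cO_X\to H^1(C,\cO_C)\to R^2g_*\cI_C$, together with $R^1g_*\cO_X=0$ (Corollary \ref{cor:relvanish}) and $R^2g_*\cI_C=0$ (fibre dimension $\le1$), gives $H^1(C,\cO_C)=0$ and $C\cong\bP^1$ for \emph{every} fibre. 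No conic-bundle embedding, no ruling out of line pairs or double lines, and no separability hypothesis are needed.
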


\begin{proof}
By Lemma \ref{lem:F-reg} and our assumption, $(X,\Delta)$ is globally $F$-regular. Since $\rho(X)\ge2$, we may an ample divisor $H$ and $0<t\ll 1$ such that $(X,\Delta_1=\Delta+tH)$ is still globally $F$-regular, $L_1=-(K_X+\Delta_1+D)$ is ample and that $L_1$ and $D$ are linearly independent in $\mathrm{Pic}(X)_{\mathbb{Q}}$. Let $\lambda=\epsilon(L_1,D)$. Clearly $\lambda>0$. As $D\cong\mathbb{P}^{n-1}$, we have $\rho(X,D)=\rho(D)=1$. Thus
by Lemma \ref{lem:lambda}, $\lambda\in\mathbb{Q}$ and $M=L_1-\lambda D$
is semiample. Since $M\neq0$ in $\mathrm{Pic}(X)_{\mathbb{Q}}$,
it induces a morphism (with connected fibers) $g:X\rightarrow Y$
such that $\dim Y\ge1$ and $M=g^{*}H$ for some ample divisor $H$
on $Y$. We claim that $M|_{D}$ is ample. Indeed, if $(L_1-\lambda D)|_{D}=M|_{D}\sim_{\mathbb{Q}}0$,
then as $L_1$ is ample, $D|_{D}$ is ample as well. Let $S$ be a surface
in $X$ given by a complete intersection of general hyperplanes, then
we have $(D|_{S}^{2})>0$ and $(D|_{S}\cdot M|_{S})=0$, but then
by Hodge index theorem, $(M|_{S}^{2})<0$ ($M|_{S}$ is not numerically
trivial since $M$ is not), contradicting the fact that $M$ is nef.
Hence $M|_{D}$ is ample and by the same argument as in Lemma \ref{lem:lambda},
we know that $H^{0}(X,mM)\rightarrow H^{0}(D,mM|_{D})$ is onto, therefore
$g|_{D}$ is a closed embedding. Since $(X,\Delta)$ is globally $F$-regular, $X$ is Cohen-Macaulay by \cite[Theorem 1.18]{sz}. We also have $-(K_X+\Delta+(\lambda+1)D)\sim_{\bQ}M+tH$ which is $g$-ample. By Lemma \ref{lem:birlocal}, $g$ cannot be birational, hence induces an isomorphism $\mathbb{P}^{n-1}=D\cong Y$. If $C$ is a scheme theoretic fiber of $g$, then $C$ has dimension
one since $\dim(C\cap D)=0$. Since $g|_{D}$ is an isomorphism and
every component of $C$ intersects $D$, the curve $C$ is irreducible and reduced. Let $\cI_C$ be the ideal sheaf of $C$. Consider the exact sequence 
\[
\cdots\rightarrow R^1 g_*\cO_X \rightarrow H^1(C,\cO_C) \rightarrow R^2g_*\cI_C \rightarrow\cdots
\]
Since $g$ has fiber dimension at most one we have $R^2g_*\cI_C=0$ and as $X$ is globally $F$-regular we also have $R^1 g_*\cO_X=0$, thus $H^1(C,\cO_C)=0$ and since $C$ is integral this implies $C\cong\bP^1$. It follows that $g:X\rightarrow Y$ is a $\mathbb{P}^{1}$-fibration with a section $D$. Thus $X\cong\mathbb{P}_{Y}(\mathcal{O}_{Y}\oplus\mathcal{O}_{Y}(-d))$ for some $d\ge0$. 
\end{proof}

\subsection{Conic bundles} \label{sec:conic_bundle}

In this subsection we study conic bundles in positive characteristic. Later we will apply these results to classify varieties $X$ with $\epsilon(-K_X)=n$ and $\left((-K_X)^n\right)=n^n$.

\begin{defn} \label{def:Gor_cb}
Let $f:X\rightarrow Y$ be a proper morphism between normal quasi-projective
varieties. If the general fiber of $f$ is a plane conic (so is either a $\mathbb{P}^{1}$ or a double
line in characteristic $2$), we call $f$ a rational conic bundle. If $X$ is Cohen-Macaulay, every fiber of $f$ has pure dimension $1$, $f_{*}\mathcal{O}_{X}=\mathcal{O}_{Y}$, 
and there exists a Cartier divisor $D$ on $X$ such that $-K_{X}\equiv_{f}D$
is $f$-ample, then we call $f$ a Gorenstein conic bundle.
\end{defn}

\begin{lem} \label{lem:conic}
Let $C$ be a locally complete intersection \emph{(}l.c.i.\emph{)}
curve over $k$. Assume that $\omega_{C}^{-1}$ is ample. Then the
following are equivalent:
\begin{enumerate}
\item $h^{0}(C,\mathcal{O}_{C})=1$;
\item $\deg\omega_{C}=-2$ and every irreducible component of $C_{\mathrm{red}}$
is isomorphic to $\mathbb{P}^{1}$;
\item $C$ is a plane conic.
\end{enumerate}
\end{lem}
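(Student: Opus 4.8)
The plan is to prove the cycle of implications $(3)\Rightarrow(2)\Rightarrow(1)\Rightarrow(3)$, using the l.c.i.\ hypothesis crucially so that the dualizing sheaf $\omega_C$ behaves like a line bundle and adjunction is available on each component. For $(3)\Rightarrow(2)$: a plane conic $C\subseteq\bP^2$ is a degree $2$ divisor, so by the adjunction formula $\omega_C=\cO_C(2-3)=\cO_C(-1)|_C$, which has degree $-2$; and $C_{\mathrm{red}}$ is either a smooth conic, a pair of distinct lines, or a double line, so every component of $C_{\mathrm{red}}$ is a line $\cong\bP^1$. The implication $(2)\Rightarrow(1)$ is the place where I expect to spend the most effort. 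The idea is to induct on the number of irreducible components of $C$. If $C$ is irreducible, then $C$ is reduced (an l.c.i.\ scheme has no embedded points, and a non-reduced irreducible l.c.i.\ curve would be a thickening of $\bP^1$ with too-negative dualizing sheaf), so $C\cong\bP^1$ and $h^0(\cO_C)=1$. Otherwise write $C=C_1\cup C'$ where $C_1$ is one component (with its reduced structure, which as a summand of $C_{\mathrm{red}}$ is $\cong\bP^1$) and $C'$ is the closure of the complement, and use the conductor/partial-normalization exact sequence
\[
0\to\cO_C\to\cO_{C_1}\oplus\cO_{C'}\to\cO_{C_1\cap C'}\to 0
\]
together with the fact that for l.c.i.\ curves the "genus" $\delta$-type invariant adds up: $\deg\omega_C=\deg\omega_{C_1}+\deg\omega_{C'}+2(\text{length of }C_1\cap C')$ (adjunction on a nodal-type gluing; more precisely $\omega_C|_{C_i}=\omega_{C_i}(C_j\cap C_i)$). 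Since each $\deg\omega_{C_i}\geq -2$ with equality iff that piece is a smooth $\bP^1$ meeting the rest in a single reduced point — here one must rule out $C_i$ having arithmetic genus making $\deg\omega_{C_i}\geq 0$ by ampleness of $\omega_C^{-1}$ forcing $\omega_{C_i}^{-1}=\omega_C^{-1}|_{C_i}(-C_j\cap C_i)$ to still be "not too positive" — the constraint $\deg\omega_C=-2$ forces $C'$ to be a single $\bP^1$ as well and $C_1\cap C'$ to be a single reduced point, whence $h^0(\cO_C)=1$ from the exact sequence ($h^0(\cO_{C_1})+h^0(\cO_{C'})-h^0(\cO_{C_1\cap C'})=1+1-1$, using $h^1$ vanishing which follows from $\deg\omega_C<0$ and $\omega_C^{-1}$ ample via $h^1(\cO_C)=h^0(\omega_C)=0$).

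For $(1)\Rightarrow(3)$: assuming $h^0(\cO_C)=1$ and $\omega_C^{-1}$ ample, first note $\deg\omega_C=-2$. Indeed $\chi(\cO_C)=h^0(\cO_C)-h^1(\cO_C)=1-h^0(\omega_C)$, and $h^0(\omega_C)=0$ since $\omega_C^{-1}$ is ample (a section of $\omega_C$ would give $\deg\omega_C\geq 0$ on some component, contradicting ampleness of the dual), so $\chi(\cO_C)=1$, i.e.\ $p_a(C)=0$, i.e.\ $\deg\omega_C=2p_a(C)-2=-2$. Now take the embedding given by $\omega_C^{-1}$ (or a suitable power): since $\deg\omega_C^{-1}=2$ and $h^0(\cO_C)=1$ one computes by Riemann–Roch for l.c.i.\ curves that $h^0(C,\omega_C^{-1})=\deg\omega_C^{-1}+\chi(\cO_C)-h^1(C,\omega_C^{-1})=2+1-0=3$ (the $h^1$ vanishes by Serre duality, $h^1(\omega_C^{-1})=h^0(\omega_C^{\otimes 2})=0$ again by ampleness), so $\omega_C^{-1}$ is very ample — base-point freeness and separation of points/tangents can be checked using the $h^0$ count on subschemes, or one invokes that a degree-$2$ line bundle with $3$ sections on a curve of arithmetic genus $0$ embeds it — and it embeds $C$ as a degree $2$ l.c.i.\ curve in $\bP^2$, i.e.\ a plane conic.

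The main obstacle is the bookkeeping in $(2)\Rightarrow(1)$: making the additivity of $\deg\omega_C$ over components precise for a general l.c.i.\ (hence possibly with non-nodal, even non-reduced-looking, but in fact here reducedness of components is forced) gluing, and checking that $\deg\omega_C=-2$ together with $\omega_C^{-1}$ ample really does force the dual graph of $C$ to be a chain with all intersections single reduced points and all components $\cong\bP^1$. An alternative that sidesteps the most delicate part: derive $(2)\Rightarrow(1)$ by first proving $(2)\Rightarrow(3)$ directly — use $\omega_C^{-1}$, which by hypothesis $(2)$ has degree $2$ and restricts to $\cO_{\bP^1}(\geq 0)$ on each component, to run the same very-ampleness argument as in $(1)\Rightarrow(3)$ once one knows $h^0(\cO_C)=1$; but since that is what we want, instead bound $h^0(\cO_C)$: it is $1$ because $C$ is connected (a disconnected $C$ would have a component on which $\omega_C^{-1}=\omega_C^{-1}|_{C_i}$ has degree $\geq\deg\omega_{C_i}^{-1}=2>$ the total, impossible as degrees of $\omega_C^{-1}$ over the connected components sum to $2$) and seminormal along the gluing with $p_a=0$. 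Either route reduces the lemma to standard adjunction and Riemann–Roch on l.c.i.\ curves, which I will not grind through here.
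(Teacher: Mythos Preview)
Your cycle runs the opposite direction from the paper's $(1)\Rightarrow(2)\Rightarrow(3)\Rightarrow(1)$, which is fine in principle, but your $(2)\Rightarrow(1)$ has a genuine gap: the claim ``if $C$ is irreducible then $C$ is reduced'' is false under hypothesis (2). The planar double line is irreducible, non-reduced, l.c.i., has $\deg\omega_C=-2$, and $C_{\mathrm{red}}\cong\bP^1$, so it satisfies (2) exactly---the dualizing sheaf is not ``too negative''. (Among first-order thickenings of $\bP^1$ by $\cO(d)$ one has $\chi(\cO_C)=d+2$, hence $\deg\omega_C=-2d-4$; taking $d=-1$ gives $\deg\omega_C=-2$.) Your induction never reaches this case, and your alternative route invokes ``seminormal along the gluing'', which again presupposes reducedness. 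Relatedly, the step $h^1(\cO_C)=h^0(\omega_C)=0$ is immediate for reduced $C$ (negative degree on each component) but needs an argument for a non-reduced $C$; it holds, but only after you know $\cI\cong\cO_{\bP^1}(-1)$, which is part of what has to be shown.

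The paper handles exactly this by placing the non-reduced case inside $(2)\Rightarrow(3)$: writing $[C]=2[C_1]$, using the exact sequence $0\to\cI\otimes\omega_C\to\omega_C\to\omega_C|_{C_1}\to0$ and Riemann--Roch to force $\deg\cI=-1$, and then identifying $C$ as an infinitesimal extension of $\bP^1$ by $\cO(-1)$, which is unique (classified by $H^1(\bP^1,T_{\bP^1}(-1))=0$) and hence the planar double line. Your $(1)\Rightarrow(3)$ via very ampleness of $\omega_C^{-1}$ is a reasonable idea and does work, but the verification that $|\omega_C^{-1}|$ separates length-$2$ subschemes supported at singular or non-reduced points is not as routine as you suggest (the ideal $\cI_Z$ need not be invertible, so the usual Serre duality shortcut is unavailable); the paper's case analysis avoids this by never needing a general very-ampleness criterion. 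In short: to repair your argument you must treat the multiplicity-two case separately, and once you do, you are essentially reproducing the paper's $(2)\Rightarrow(3)$.
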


\begin{proof}
We will show $(1)\Rightarrow(2)\Rightarrow(3)\Rightarrow(1)$. By
Riemann-Roch and Serre duality (see \cite{liu} for Riemann-Roch
formula on singular curves) we have $\chi(\mathcal{O}_{C})=-\chi(\omega_{C})=-\deg\omega_{C}-\chi(\mathcal{O}_{C})$,
hence $-\deg\omega_{C}=2\chi(\mathcal{O}_{C})$. On the other hand,
since $\omega_{C}^{-1}$ is ample, $-\deg\omega_{C}>0$, thus if (1)
holds we have $0<\chi(\mathcal{O}_{C})=1-h^{1}(C,\mathcal{O}_{C})\le1$,
hence $\chi(\mathcal{O}_{C})=1$, $h^{1}(C,\mathcal{O}_{C})=0$ and
$\deg\omega_{C}=-2$. Moreover, as $\dim C=1$ the map $H^{1}(C,\mathcal{O}_{C})\rightarrow H^{1}(C_{i},\mathcal{O}_{C_{i}})$
is surjective for every component $C_{i}$ of $C_{\mathrm{red}}$,
which implies (2). 

Write $[C]=\sum a_{i}[C_{i}]$ as a 1-cycle where the $C_{i}$'s are
irreducible components of $C$, then $\deg\omega_{C}=\sum a_{i}\deg(\omega_{C}|_{C_{i}})$.
Since $\omega_{C}^{-1}$ is ample, $\deg(\omega_{C}|_{C_{i}})<0$.
Hence if (2) holds we have either $C$ is reduced with at most two
components or $[C]=2[C_{1}]$. If $C$ is reduced, the same Riemann-Roch
calculation as above yields $h^{1}(C,\mathcal{O}_{C})=0$, hence either
$C\cong\mathbb{P}^{1}$ or $C$ is the union $C_{1}\cup C_{2}$ of
two $\mathbb{P}^{1}$. In the latter case, by the exact sequence $0\rightarrow\mathcal{O}_{C}\rightarrow\mathcal{O}_{C_{1}}\oplus\mathcal{O}_{C_{2}}\rightarrow\mathcal{O}_{C_{1}\cap C_{2}}\rightarrow0$
we have $h^{0}(\mathcal{O}_{C_{1}\cap C_{2}})=1$ hence $C_{1}\cap C_{2}$
(scheme-theoretic intersection) consists of only one point and $C$
is a reducible conic. If $[C]=2[C_{1}]$, let $\mathcal{I}$ be the
ideal sheaf of $C_{1}$, then $\mathcal{I}^{2}=0$ and we have an
exact sequence $0\rightarrow\mathcal{I}\otimes\omega_{C}\rightarrow\omega_{C}\rightarrow\omega_{C}|_{C_{1}}\rightarrow0$.
As $\deg\omega_{C}=-2$ and $C_{1}\cong\mathbb{P}^{1}$, we get $\deg(\omega_{C}|_{C_{1}})=-1$
and $\chi(\omega_{C}|_{C_{1}})=0$. On the other hand, by Riemann-Roch
we have $\chi(\omega_{C})=\frac{1}{2}\deg\omega_{C}=-1$, hence $\chi(\mathcal{I}\otimes\omega_{C})=-1$
and $\deg\mathcal{I}=-1$. It follows that $C$ is an infinitesimal
extension (see \cite[II, Ex 8.7]{hartshorne}) of $C_{1}$ by $\mathcal{O}_{C_{1}}(-1)$,
which is classified by $H^{1}(C_{1},T_{C_{1}}(-1))=0$ by \cite[III, Ex 4.10]{hartshorne}. Since one such extension is given by the planar double line, it is isomorphic to $C$ and in particular $C$ is a plane conic. This proves (3). Finally it is clear that (3) implies (1).
\end{proof}

\begin{lem} \label{lem:conicbundle}
Let $f:X\rightarrow Y$ be a Gorenstein conic bundle. Assume that $Y$ is
smooth and $X$ is smooth at the generic points of every fiber of $f$, then $f$ is a conic
bundle.
\end{lem}

\begin{proof}
By dimension reason the singular locus of $X$ cannot dominate $Y$,
hence the general fiber $C$ of $f$ is l.c.i and $\omega_{C}^{-1}$
is ample by adjunction. Since $f_{*}\mathcal{O}_{X}=\mathcal{O}_{Y}$, $f$ has connected fibers and
we have $h^{0}(C,\mathcal{O}_{C})=1$, thus $C$ is a plane conic
by Lemma \ref{lem:conic}. Fix an arbitrary closed point $y\in Y$. By assumption, $X$ is smooth along the generic points of $C_y:=f^{-1}(y)$, and $f$ is flat by \cite[III. Ex 10.9]{hartshorne}. Since $(D\cdot C)=(-K_{X}\cdot C)=2$ (where $D$ is the Cartier
divisor on $X$ such that $-K_{X}\equiv_{f}D$ in the definition of Gorenstein conic bundles), $C_y$ has
at most $2$ irreducible components (counting multiplicities). In particular, at a general point $x$ of every component of $C_y$ we have $\dim_{k}\mathfrak{m}_{C_y,x}/\mathfrak{m}_{C_y,x}^{2}\le2$ and the image of $\mathfrak{m}_{Y,y}/\mathfrak{m}_{Y,y}^{2}\rightarrow\mathfrak{m}_{X,x}/\mathfrak{m}_{X,x}^{2}$
has dimension at least $n-2$ (where $n=\dim X$). It follows that for a general curve $B\subseteq Y$ passing through $y$, the scheme-theoretic preimage $S=f^{-1}(B)$ is smooth at the generic points of $C_y$. Hence $S$ is
generically reduced; it is also $S_{2}$ since $X$ is Cohen-Macaulay. Therefore $S$
is reduced.

We divide into cases to show that $C_y$ is a plane conic. If the general fiber $C$ is reduced, then it is a smooth rational curve, hence $S$ is smooth in codimension one (as it is smooth both outside $C_y$ by Bertini's theorem and also at the generic points of $C_y$) and thus normal. By adjunction $S\rightarrow B$
is also a Gorenstein conic bundle, so by \cite[Lemma 15]{lz16} (whose proof works in any characteristic), $C_y=f|_{S}^{-1}(y)$
is a plane conic. If $C$ is a double line (which only happens in
characteristic 2), then we have $C_y=2C_{1}$ as a 1-cycle.
Let $\bar{S}\rightarrow S$ be the normalization of $S$, $\Delta\subseteq\bar{S}$
the conductor and $g:\bar{S}\rightarrow\bar{B}$ the Stein factorization
of $\bar{S}\rightarrow B$. Let $\bar{C}_{1}$ be the strict transform
of $C_{1}$. Then the general fiber of $g$ is a smooth rational curve,
therefore $\bar{B}\rightarrow B$ is purely inseparable of degree
2 and indeed every fiber of $g$ is irreducible and reduced. By \cite[II.2.8]{kol96}, $g$ is a $\mathbb{P}^{1}$-bundle. It follows that $2=(-K_{\bar{S}}\cdot\bar{C}_{1})=(-K_{S}\cdot C_{1})+(\Delta\cdot\bar{C}_{1})$,
but since $(-K_{S}\cdot C_{1})=(D\cdot C_{1})=1$, we get $(\Delta\cdot\bar{C}_{1})=1$.
Hence the conductor intersects $\bar{C}_{1}$ transversally at a single
point and $\bar{C}_{1}\rightarrow C_{1}$ is an isomorphism. In particular,
$C_{1}\cong\mathbb{P}^{1}$. Note that $\chi(\mathcal{O}_{C_y}(-D))=\chi(\mathcal{O}_{C}(-D))=-1$,
by the exact sequence 
\[
0\rightarrow\mathcal{I}_{C_{1}}(-D)\rightarrow\mathcal{O}_{C_y}(-D)\rightarrow\mathcal{O}_{C_{1}}(-1)\rightarrow0
\]
and the similar proof of $(2)\Rightarrow(3)$ in Lemma \ref{lem:conic}
we see that $C_y$ is a planar double line.

We therefore conclude that in all cases $C_y$ is a plane conic.
As $C_y$ is cut out by hypersurfaces, $X$ has only hypersurface
singularities and in particular is Gorenstein. The lemma now follows
from standard argument (i.e. $\mathcal{E}=f_{*}\omega_{X}^{-1}$
is a vector bundle of rank 3 on $Y$ and $X$
embeds into $\mathbb{P}(\mathcal{E})$, see e.g. \cite{conic}).
\end{proof}

The following corollary is well-known in characteristic zero by the work of \cite{ando}.

\begin{cor} \label{cor:ratconic}
Let $f:X\rightarrow Y$ be a proper morphism. Assume that every fiber
of $f$ has dimension $1$, $-K_{X}$ is $f$-ample and $f_{*}\mathcal{O}_{X}=\mathcal{O}_{Y}$.
Then $f$ is a rational conic bundle. If in addition $X$ and $Y$
are both smooth, then $f$ is a conic bundle.
\end{cor}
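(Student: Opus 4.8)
The plan is to reduce both assertions to Lemma \ref{lem:conicbundle} by restricting over a dense open subset of $Y$ above which the total space is smooth. I will use throughout that $f$ is surjective with connected fibres (as $f_*\mathcal{O}_X=\mathcal{O}_Y$), so that $\dim Y=\dim X-1$, and that every fibre has pure dimension $1$: indeed $\dim_x f^{-1}(f(x))\ge\dim X-\dim Y=1$ for all $x$ by the theorem on the dimension of fibres, and $\le 1$ by hypothesis. Here $X$ and $Y$ are the normal quasi-projective varieties of the standing setup of this section, and $-K_X$ is $\mathbb{Q}$-Cartier since it is assumed $f$-ample.

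I would first treat the case in which $X$ and $Y$ are both smooth, since it is the template for the general case. Then $D:=-K_X$ is a Cartier divisor with $-K_X\equiv_f D$ that is $f$-ample by hypothesis; together with $f_*\mathcal{O}_X=\mathcal{O}_Y$, the Cohen--Macaulayness of $X$, and the fibres being of pure dimension $1$, this is precisely the statement that $f$ is a Gorenstein conic bundle in the sense of Definition \ref{def:Gor_cb}. Since $Y$ is smooth, Lemma \ref{lem:conicbundle} then shows that $f$ is a conic bundle, which is the second assertion.

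For the ``rational conic bundle'' assertion I would carve out the right open locus. Because $X$ is normal, $\mathrm{Sing}(X)$ has codimension at least $2$ in $X$, so $\dim f(\mathrm{Sing}(X))\le\dim X-2<\dim Y$; moreover $f(\mathrm{Sing}(X))$ is closed since $f$ is proper, and $Y_{\mathrm{sm}}$ is dense since $Y$ is a variety over the algebraically closed field $k$. Hence $Y^\circ:=Y_{\mathrm{sm}}\setminus f(\mathrm{Sing}(X))$ is a dense open subset of $Y$, and $X^\circ:=f^{-1}(Y^\circ)$ is a dense open subset of $X$ disjoint from $\mathrm{Sing}(X)$, hence a smooth quasi-projective variety. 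The restriction $f^\circ\colon X^\circ\to Y^\circ$ is proper (a base change of $f$), satisfies $(f^\circ)_*\mathcal{O}_{X^\circ}=\mathcal{O}_{Y^\circ}$ by flat base change, has all fibres of pure dimension $1$, and $-K_{X^\circ}=(-K_X)|_{X^\circ}$ is Cartier and $f^\circ$-ample; exactly as in the smooth case, $f^\circ$ is therefore a Gorenstein conic bundle over the smooth variety $Y^\circ$. By Lemma \ref{lem:conicbundle}, $f^\circ$ is a conic bundle, so its general fibre --- which is also a general fibre of $f$, since $Y^\circ$ is dense --- is a plane conic. This means $f$ is a rational conic bundle, completing the proof.

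I do not expect a genuine obstacle here: once Lemma \ref{lem:conicbundle} is available, the corollary is essentially bookkeeping. The one point that requires a little care is the construction of $Y^\circ$ --- i.e. exploiting normality of $X$ (to control $\mathrm{Sing}(X)$) and of $Y$ (so that $Y_{\mathrm{sm}}$ is dense) to ensure that after passing to a dense open of the base the total space becomes smooth --- since it is precisely smoothness of $X^\circ$ that lets us take $D=-K_{X^\circ}$ and verify the Gorenstein-conic-bundle hypotheses by inspection rather than through a separate cohomology-and-base-change argument with Lemma \ref{lem:conic}.
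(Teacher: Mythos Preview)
Your argument is correct and follows exactly the route the paper intends: the paper's proof is a single line (``immediate consequence of the above lemma''), and you have simply spelled out the verification that the hypotheses of Lemma~\ref{lem:conicbundle} hold, passing to a dense open of the base where both $X$ and $Y$ are smooth for the first assertion. Nothing more is needed.
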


\begin{proof}
This is an immediate consequence of the above lemma.
\end{proof}

The next lemma is essentially \cite[Lemma 17]{lz16}, with strong $F$-regularity in place of klt singularity.

\begin{lem} \label{lem:basechange}
Let $f:X\rightarrow Y$ be a Gorenstein conic bundle and $\phi:\tY\rightarrow Y$ a finite separable morphism. Let $\tX$ be the normalization of $X\times_{Y}\tY$. Assume that $X$ is smooth at the generic points of every fiber of $f$ and strongly $F$-regular at all points, and the branch divisor of $\phi$ is disjoint from the singular locus of $\tY$ and $Y$. Then $\tf:\tX\rightarrow\tY$ is also a Gorenstein conic bundle.
\end{lem}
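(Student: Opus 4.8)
The plan is to verify directly that $\tf:\tX\to\tY$ satisfies the four conditions in Definition \ref{def:Gor_cb}: $\tX$ is Cohen--Macaulay, every fiber of $\tf$ has pure dimension $1$, $\tf_*\cO_{\tX}=\cO_{\tY}$, and there is a Cartier divisor on $\tX$ numerically $\tf$-equivalent to $-K_{\tX}$ that is $\tf$-ample. First I would record the shape of $\tX$ near the branch locus. Away from the branch divisor $B$ of $\phi$, the map $\tY\to Y$ is \'etale, so $X\times_Y\tY\to\tX$ is an isomorphism there and $\tX$ inherits all the desired properties from $X$ \'etale-locally. Over a neighbourhood of $B$, the hypothesis says $B$ avoids $\mathrm{Sing}(Y)$ and $\mathrm{Sing}(\tY)$, so both $Y$ and $\tY$ are smooth there and $\phi$ is (after shrinking) a cyclic cover of smooth varieties branched along a smooth divisor; in particular $\tY\to Y$ is flat and $X\times_Y\tY$ is again Cohen--Macaulay (base change of the Cohen--Macaulay $X$ by the flat morphism $\tY\to Y$, or simply because $X$ is CM and $\tY\to Y$ is finite flat). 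Then I would invoke Corollary \ref{cor:relvanish} and Lemma \ref{lem:basechange}'s strong F-regularity hypothesis: by \cite{ss} strong F-regularity is preserved under finite separable base change along a morphism whose ramification avoids the relevant singular loci, so $X\times_Y\tY$ is already normal in codimension $0$ and, being CM, satisfies $S_2$; hence $\tX=X\times_Y\tY$ coincides with its normalization in a neighbourhood of $B$, and globally $\tX$ is strongly F-regular, in particular normal and Cohen--Macaulay.

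Next I would check the fiber dimension and $\tf_*\cO_{\tX}=\cO_{\tY}$. Since $\phi$ is finite, the fibers of $\tf$ map finitely onto fibers of $f$ over the image points; as every fiber of $f$ has pure dimension $1$, so does every fiber of $\tf$, and normalization does not change this. For the direct image: over the \'etale locus of $\phi$ this is immediate from $f_*\cO_X=\cO_Y$; over the branch locus, where everything is smooth and $\tY\to Y$ is finite flat, flat base change gives $(f\times_Y\tY)_*\cO_{X\times_Y\tY}=\phi^*f_*\cO_X=\cO_{\tY}$, and since $X\times_Y\tY$ is already normal near $B$ this equals $\tf_*\cO_{\tX}$ there. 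As $\tf$ is proper with $\tY$ normal and the generic fibers geometrically connected and reduced (the generic fiber of $\tf$ is the base change of the generic conic fiber of $f$ by a separable extension, hence still a plane conic), $\tf_*\cO_{\tX}$ is a coherent sheaf agreeing with $\cO_{\tY}$ away from a codimension $\ge 1$ set and containing $\cO_{\tY}$; normality of $\tY$ forces equality $\tf_*\cO_{\tX}=\cO_{\tY}$.

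Finally, for the Cartier divisor: let $D$ be the $f$-ample Cartier divisor on $X$ with $-K_X\equiv_f D$, and let $p:\tX\to X$ be the projection. Set $\tD=p^*D$, which is Cartier and $\tf$-ample (pullback of $f$-ample under a finite morphism). I would compare canonical divisors via the conductor: writing $\tf$ and $f$ and using that $\tY\to Y$ is \'etale in codimension $1$ over the smooth locus and that $B$ avoids all singular loci, the ramification formula gives $K_{\tX}=p^*K_X+(\text{ramification divisor})$, which is $\tf$-trivial because it is pulled back from the ramification divisor of $\tY\to Y$ upstairs; hence $-K_{\tX}\equiv_{\tf}p^*(-K_X)\equiv_{\tf}p^*D=\tD$. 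This establishes all four conditions, so $\tf$ is a Gorenstein conic bundle. The main obstacle I anticipate is the bookkeeping near the branch divisor --- precisely checking that $X\times_Y\tY$ is already normal there (so that normalizing does nothing locally and the identities for $\tf_*\cO_{\tX}$ and $K_{\tX}$ transport cleanly), which is where the hypothesis that $B$ is disjoint from $\mathrm{Sing}(Y)\cup\mathrm{Sing}(\tY)$, combined with strong F-regularity of $X$ and its stability under tame base change \cite{ss}, does the real work.
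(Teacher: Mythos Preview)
Your overall plan---split the base into the locus where $\phi$ is unramified and a neighbourhood of the branch divisor $B$, and treat the two pieces separately---is exactly the paper's strategy. Over the complement of $B$ the map $\phi$ is \'etale in codimension one, and the paper invokes Watanabe's theorem \cite[Theorem 2.7]{wat} (not \cite{ss}) to conclude that $\tX$ is strongly F-regular and hence Cohen--Macaulay; the remaining Gorenstein conic bundle conditions are then inherited from $f$ as you say. So far so good.

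The difficulties are in your treatment near $B$. First, the assertion that $\phi$ is ``(after shrinking) a cyclic cover of smooth varieties branched along a smooth divisor'' is not justified: $\phi$ is only assumed finite and separable, not Galois, and nothing forces $B$ to be smooth. Second, and more seriously, your normality argument is incorrect: ``normal in codimension $0$'' plus $S_2$ does not give normality---you need $R_1$---and in fact $X\times_Y\tY$ can genuinely fail to be normal even when $X$ is strongly F-regular and $Y,\tY$ are both smooth (e.g.\ base-change a conic bundle whose discriminant meets $B$ so that the pulled-back family acquires non-reduced fibres in codimension one). Once $\tX\neq X\times_Y\tY$ near $B$, your subsequent computations of $\tf_*\cO_{\tX}$ and the ramification formula for $K_{\tX}$ are no longer on solid ground, because normalization introduces its own conductor.

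The paper sidesteps all of this by appealing to Lemma~\ref{lem:conicbundle}: since $Y$ is smooth near $B$, $f$ is there an honest conic bundle (a relative quadric in a $\bP^2$-bundle), and this structure is preserved by the finite flat base change $\phi$. Working with the genuine conic bundle $X\times_Y\tY\to\tY$ over the smooth $\tY$ makes the Gorenstein conic bundle conditions transparent without ever needing to control the normalization map directly. That is the missing ingredient in your argument.
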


\begin{proof}
By shrinking $Y$ we may assume either $\phi$ is \'etale in codimension one or both $Y$ and $\tY$ are smooth. In the first case $\tX$ is also strongly $F$-regular by \cite[Theorem 2.7]{wat} hence is Cohen-Macaulay by \cite[Theorem 1.18]{sz}, and the other properties of Gorenstein conic bundles are preserved by a finite base change that is \'etale in codimension one. In the second case $f$ is a conic bundle by Lemma \ref{lem:conicbundle}, hence the same holds for $\tf$.
\end{proof}

\subsection{Proof of Theorem \ref{thm:Pn}, \ref{thm:=n} and \ref{thm:n^n}} \label{sec:pf-1}

Before proving these theorems, we make a few reductions and fix the following notations. After a base change, we first assume that the base field $k$ is uncountable. Since the Seshadri constant of a line bundle $L$ attains its maximum at a very general point of $X$, we may also assume that $x\not\in\Supp(\Delta)$. Let $\sigma:Y\rightarrow X$ be the blow up of $X$ at $x$ and let $E$ be the exceptional divisor. Let $\Delta$ also denote its strict transform on $Y$.

\begin{proof}[Proof of Theorem \ref{thm:Pn}(1)]
As $\epsilon(L,x)>n$, $-(K_Y+\Delta+E)=\sigma^*L-nE$ is ample. Clearly $\rho(Y)\ge 2$ and $\Delta\cap E=\emptyset$, thus by Lemma \ref{lem:rho2}, $Y$ is isomorphic to a $\mathbb{P}^{1}$-bundle $\mathbb{P}(\mathcal{O}\oplus\mathcal{O}(-d))$
over $\mathbb{P}^{n-1}$ for some $d\in\mathbb{Z}_{\ge0}$ and $E$ is a section. But since $\mathcal{N}_{E/Y}\cong\cO_E(-1)$, we have $d=1$ and $E$ is the unique negative section. It follows that $Y$ is the blowup of $\bP^n$ at a point and therefore $X\cong\bP^n$.
\end{proof}

\begin{proof}[Proof of Theorem \ref{thm:=n} when $(L^n)>n^n$]
By assumption, $D=-(K_Y+\Delta+E)=\sigma^*L-nE$ is nef and big and $(Y,\Delta)$ is globally $F$-regular by Corollary \ref{cor:=n->F-reg}. We claim that $D$ is semiample. Note that $mD-E-(K_Y+\Delta)=(m+1)D$ is nef and big, so by Lemma \ref{lem:vanishing}, $H^1(Y, \cO_Y(mD-E))=0$ for all $m\ge0$ and 
\begin{equation} \label{eq:surj}
H^0(Y,\cO_Y(mD))\rightarrow H^0(E,\cO_E(mD))
\end{equation}
is surjective, hence $E$ is disjoint from the stable base locus $\mathrm{Bs}(D)$ of $D$. Now if $y\in \mathrm{Bs}(D)$, then by Lemma \ref{lem:basept} there exists a curve $C$ containing $y$ such that $(D\cdot C)=0$, but then $C\subseteq\mathrm{Bs}(D)$ and since $-(K_Y+\Delta)$ is ample, we have $(E\cdot C)>0$ and in particular $E\cap\mathrm{Bs}(D)\neq\emptyset$, a contradiction. Hence $\mathrm{Bs}(D)=\emptyset$.

Therefore, $D$ is semiample and induces a birational morphism $g:Y\rightarrow Z$. Clearly $\Delta\cap E=\emptyset$. Since $D$ is not ample (otherwise $\epsilon(L,x)>n$), $g$ is not the identity morphism. By the surjectivity of (\ref{eq:surj}), $g|_E$ is a closed embedding as $D|_E$ is ample. Note that $-(K_Y+\Delta+E)\sim_{g.\bQ.}0$, so by Lemma \ref{lem:birlocal}, $g$ is an isomorphism around $\Delta$ and $Z$ is smooth along $G=g(E)\cong\bP^{n-1}$. It follows that $\Delta$ is also disjoint from $G$ (here we identify $\Delta$ with its image in $Z$). By the construction of $g$, $-(K_Z+\Delta+G)$ is ample. By Lemma \ref{lem:rho1} and \ref{lem:rho2}, one of the following holds:

\begin{enumerate}
\item $Z\cong\mathbb{P}(1^{n},d)$ for some $d\in\mathbb{Z}_{>0}$ and $G$ is the hyperplane defined by the vanishing of the last coordinate;
\item $Z$ is isomorphic to a $\mathbb{P}^{1}$-bundle $\mathbb{P}(\mathcal{O}\oplus\mathcal{O}(-d))$
over $\mathbb{P}^{n-1}$ for some $d\in\mathbb{Z}_{\ge0}$ and $G$ is a section; or
\item $n=2$, $Z\cong\mathbb{P}^{2}$ and $G$ is a smooth conic.
\end{enumerate}

We now show that $Y$ is the blowup of $Z$ along a hypersurface in $G\cong\bP^{n-1}$. This essentially follows from the argument of \cite[Lemma 11]{lz16}, once we have the vanishing
\begin{equation} \label{eq:R^1=0}
R^{1}g_{*}\mathcal{O}_Y(-mW)=0
\end{equation}
for all $m\ge0$, where $W=g^{*}G-E$. But since $-mW-(K_Y+\Delta)\sim_{g.\mathbb{Q}}(m+1)E$ is $g$-ample, (\ref{eq:R^1=0}) follows from Corollary \ref{cor:relvanish} and the global $F$-regularity of $(Y,\Delta)$. We also notice that $G$ is nef by \cite[Lemma 10]{lz16}. We can therefore apply the same argument of \cite[Lemma 13]{lz16} to conclude the proof.
\end{proof}

\begin{proof}[Proof of Theorem \ref{thm:=n} and \ref{thm:n^n}]
The proof of both theorems is intertwined and a bit lengthy, so we divide it into several steps. Let $D=\sigma^{*}L-nE=-(K_{Y}+\Delta+E)$. Since the case $(D^n)=(L^n)-n^n>0$ of Theorem \ref{thm:=n} is already treated, we may assume $(D^{n})=0$.

\medskip
\emph{Step 1} ($D$ is semiample). By assumption $-(K_{Y}+\Delta)$ is ample, $D$ is nef and $(D^{n})=0$, hence
by Riemann-Roch we have 

\begin{eqnarray*}
h^{0}(Y,mD) & \ge & h^0(Y,mL)-h^0(E,\cO_{mnE}) \\
            & =   & \frac{(D^{n})}{n!}m^{n}+\frac{(-K_{Y}\cdot D^{n-1})}{2(n-1)!}m^{n-1}+O(m^{n-2})\\
            & =   & \frac{n^{n-1}}{2(n-1)!}m^{n-1}+O(m^{n-2})
\end{eqnarray*}

It follows that $\nu(Y,D)=\kappa(Y,D)=n-1$ where $\nu(Y,D)=\max\{d\,|\,D^{d}\not\equiv0\}$
is the numerical dimension of $D$. By \cite[Proposition 2.1]{kaw}, there exists a diagram of normal varieties (the characteristic zero assumption in \cite{kaw} is only used to make the varieties in the diagram smooth)
\[
\xymatrix{Y_0\ar[r]^{\mu}\ar[d]_{f} & Y\\
Z_{0}
}
\]
and a nef and big divisor $D_{0}$ on $Z_{0}$ such that $\mu$ is
birational, $f$ is equi-dimensional and $\mu^{*}D=f^{*}D_{0}$. It
follows that for every closed point $y\in Y$ there exists a curve
$C_{y}\subseteq Y$ (coming from a fiber of $f$ that intersects $\mu^{-1}(y)$)
such that $(D\cdot C_{y})=0$ and $C_{y}$ is unique if $y$ is general.
Since $\kappa(Y,D)=n-1$, for sufficiently divisible $m$ the linear
system $|mD|$ gives a rational map $g:Y\dashrightarrow Z$ with $\dim Z=n-1$.
As $(D\cdot C_{y})=0$, $g$ is defined along $C_{y}$ if $y\not\in\mathrm{Bs}(D)$,
hence $C_{y}$ is the (at least set-theoretic) general fiber of $g$
and we get a proper morphism $g_{1}:Y_{1}\rightarrow Z_{1}$ with $g_{1*}\mathcal{O}_{Y_{1}}=\mathcal{O}_{Z_{1}}$
by shrinking $Z$ and taking Stein factorization. Let $Z_{1}\dashrightarrow\mathrm{Chow}(Y)$
be the rational map induced by $g_{1}$ (see \cite[I.3-4]{kol96}
for the definition and basic properties of Chow varieties) and let $Z'$
be the normalization of the closure of the image of $Z_{1}$ in $\mathrm{Chow}(Y)$. By Corollary \ref{cor:ratconic}, the general fiber $L_z$ of $g_1$ is a plane conic. 

Assume for the moment that $L_z$ is a smooth conic (this is automatically satisfied when $p=\mathrm{char}(k)\neq2$ or $(\Delta\cdot C_y)>0$: in the latter case, $(D\cdot C_y)<(-K_Y-E\cdot C_y)\le 0$ if $L_z$ is nonreduced). In particular, $g_{1}$ has reduced
general fiber and we get a universal family $q:U\rightarrow Z'$.
Let $u:U\rightarrow Y$ be the cycle map. We claim that $u$ is an
isomorphism.

To this end let $C\subseteq U$ be a curve that is contracted by $u$.
Since $u$ is injective on every fiber of $q$, $q(C)$ is not a point.
Let $S$ be the irreducible component of $q^{-1}(q(C))$ that contains
$C$. By construction $(u^{*}D\cdot C)=(u^{*}D\cdot F)=0$ where $F$
is any component in a fiber of $q$, thus by \cite[Proposition 2.5]{nefreduction}, $u^{*}D|_{S}$ is numerically trivial. Let $T=u(S)$, then $T$ is a surface in $Y$ such that $D|_{T}\equiv0$.
As $D=-K_{Y}-\Delta-E$ and $-(K_{Y}+\Delta)$ is ample, $T$ must intersect $E$
and $\dim(T\cap E)\ge1$, but then since $D|_{E}\sim-nE|_{E}$ is
ample, $D|_{T\cap E}$ cannot be numerically trivial, a contradiction.
Hence $u$ is quasi-finite and is indeed an isomorphism since it is
also birational and $Y$ is normal.

Thus we get a rational conic bundle $Y\cong U\rightarrow Z'$ with
general fiber $C_{y}$. As $(D\cdot C_{y})=0$, any $G\in|mD|$ can not dominate $Z'$, thus as every fiber of $q$ has pure dimension 1, $G$ is in fact the pullback of an effective divisor on $Z'$. On the other hand by \cite[Lemma 5.16]{km98} applied to the finite morphism $E\cong\mathbb{P}^{n-1}\rightarrow Z'$, we see that $Z'$ is $\mathbb{Q}$-factorial of Picard number one. Hence $D$ is semiample if $L_z$ is reduced.

\medskip
\emph{Step 2} (Proof of Theorem \ref{thm:=n} when $\Delta\neq0$). We claim that $\Delta$ is $\bQ$-Cartier. Using the notation and construction in Step 1, there are three cases to consider.

Suppose first that $(\Delta\cdot C_y)>0$. Then $|mD|(m\gg0)$ defines a morphism $g:Y\rightarrow Z$ by Step 1. Moreover $(E\cdot C_{y})=(-(K_{Y}+\Delta)\cdot C_{y})<2$, thus $(E\cdot C_y)=1$ and $E\rightarrow Z$ is an isomorphism. Since $E$ intersects every component in the fiber of $g$ (for otherwise this component would have zero intersection number with the ample divisor $-(K_Y+\Delta)$), we see that every fiber of $g$ is generically irreducible and reduced. By \cite[Lemma 6]{lz16}, there exists a codimension $\ge2$ subset $W\subseteq Z$ such that $Y\backslash g^{-1}(W)$ is isomorphic to a $\bP^1$-bundle over $Z\backslash W$. It follows that the class group of $Y$ is generated by $E$ and $g^*\Pic(Z)$ and in particular $Y$ is $\bQ$-factorial. Thus $\Delta$ is $\bQ$-Cartier in this case.

Assume next that $(\Delta\cdot C_y)=0$ and $L_z$ is a smooth conic. Again we have a rational conic bundle $g:Y\rightarrow Z$ defined by $|mD|(m\gg0)$. Since $(\Delta\cdot C_y)=0$, $\Delta_Z=g(\Delta)$ is a divisor in $Z$. As $(E\cdot C_{y})=(-(K_{Y}+\Delta)\cdot C_{y})=2$, every fiber of $g$ has at most 2 components (counting multiplicity), thus by the same proof of \cite[Lemma 16]{lz16}, $g^{-1}(u)$ is a plane conic where $u$ is a generic point of $\Delta_Z$. We claim that $\Delta$ is proportional to $g^*\Delta_Z$ over $u$. Suppose not, then $g^{-1}(u)$ is not irreducible and there exists a component $F$ of $g^{-1}(u)$ such that $(\Delta\cdot F)>0$. But we also have $(-K_Y\cdot F)=1\le (E\cdot F)$, hence $(D\cdot F)=(-(K_Y+\Delta+E)\cdot F)<0$, a contradiction. Therefore, we can find a $\bQ$-divisor $\Delta_1$ supported on $\Delta_Z$ such that $\Delta=g^*\Delta_1$. Recall that $Z$ is $\bQ$-factorial, thus $\Delta$ is also $\bQ$-Cartier in this case.

Finally suppose that $(\Delta\cdot C_y)=0$ and $L_z$ is a nonreduced conic. In particular $p=2$. Taking the base change of $g_1:Y_1\rightarrow Z_1$ by $E_1\rightarrow Z_1$ (where $E_1$ is the preimage of $Z_1$ in $E$), we get a family $h_1:U_1\rightarrow E_1$ of reduced curves in $Y$ with general member $C_y$. As in Step 1 we may extend $h_1$ to a universal family $h:U\rightarrow V$ (where $V$ is the closure of the image of $E_1$ in $\mathrm{Chow}(Y)$) and the same argument there implies that the cycle map $u:U\rightarrow Y$ is quasi-finite, thus is an inseparable double cover. It follows that the Frobenius map of $Y$ factors through $u$, hence $u^{-1}(E)$ is $\bQ$-factorial and $\Delta$ is $\bQ$-Cartier if and only if $u^*\Delta$ is $\bQ$-Cartier. But as $(E\cdot C_y)=(-K_Y\cdot C_y)=1$, every fiber of $h$ is generically integral, thus $u^*\Delta$ is the pullback of a divisor from $V$. Since $V$ is dominated by $u^{-1}(E)$, it is $\bQ$-factorial by \cite[Lemma 5.16]{km98}. Hence $\Delta$ is $\bQ$-Cartier in this last case.

Now that $\Delta\neq0$ is $\bQ$-Cartier, we may replace $(X,\Delta)$ by $(X,(1-c)\Delta)$ for $0<c\ll 1$ and reduce to the case $(L^n)>n^n$ using \cite[Theorem B]{fkl}. This finishes the proof of Theorem \ref{thm:=n}.

\medskip
In the remaining part of the proof, we assume that $\Delta=0$ and $p>2$. By Step 1, this implies that $D$ is semiample and induces a morphism $g:Y\rightarrow Z$. We have $-K_Y\sim_{g.\bQ.}E$, thus $g$ is a Gorenstein conic bundle if $Y$ is Cohen-Macaulay. As $Y$ is smooth along $E$ and $E$ is $g$-ample, $Y$ is smooth at the generic points of every fiber of $g$.

\medskip
\emph{Step 3} (Surface case). If $Y$ is a surface, then by \cite[Lemma 15]{lz16}, $Y$ has only Du Val singularity. It follows that $X$ is a Gorenstein log del Pezzo surface of degree $(K_{X}^{2})=4$. Hence from now on, we assume that $n=\dim X\ge3$.

\medskip
\emph{Step 4} ($Y$ is globally $F$-regular). It is clear that $E$ is a double section of $g$. We may
assume that $E\rightarrow Z$ is ramified (the quasi-\'etale case
is similar and even simpler), then $Z\cong\mathbb{P}(1^{n-1},2)$,
$g|_{E}$ is ramified along the hyperplane $M\subseteq Z$ defined
by the vanishing of the last coordinate and $K_{E}=g^{*}(K_{Z}+\frac{1}{2}M)$.
Since the general fiber of $g$ is a smooth rational curve, we can
choose an ample Cartier divisor $H$ on $Z$ such that $Y\backslash g^{-1}H$
is smooth and globally $F$-regular. We then have a similar diagram as
in the proof of Lemma \ref{lem:F-reg}:
\[
\xymatrix{
 F_{*}^{e}\mathcal{O}_{X}((1-p^{e})(K_{X}+E)-g^*H)\ar[r]^-{j}\ar[d]^{\mathrm{Tr}_{X}^{e}} & F_{*}^{e}\mathcal{O}_{E}((1-p^{e})K_{E}-g|^*_E H)\ar[d]^{\mathrm{Tr}_{E}^{e}}\\
 \mathcal{O}_{X}\ar[r] & \mathcal{O}_{E}
}
\]
Although $H^{0}(j)$ is not surjective, its image contains $g^{*}H^{0}(Z,(1-p^{e})(K_{Z}+\frac{1}{2}M)-H)$
for $e\gg0$, hence by the same argument in Lemma \ref{lem:F-reg},
it suffices to show that $H^{0}(\mathrm{Tr}_{Z}^{e})$ is surjective
for $e\gg0$ where $\mathrm{Tr}_{Z}^{e}:F_{*}^{e}\mathcal{O}_{Z}((1-p^{e})(K_{Z}+\frac{1}{2}M)-H)\rightarrow\mathcal{O}_{Z}$
is the trace map. But it is clear that the toric pair $(Z,\frac{1}{2}M)$
is globally $F$-regular, so we are done.

\medskip
\emph{Step 5} (Analysis of the Gorenstein conic bundles). Since $Y$ is globally $F$-regular, it is Cohen-Macaulay by \cite[Theorem 1.18]{sz}, thus $g$ is a Gorenstein conic bundle and is indeed a conic bundle over the smooth locus of $Z$ by Lemma \ref{lem:conicbundle}. Let $W$ be the normalization of $Y\times_{Z}E$, then since $E\cong\mathbb{P}^{n-1}\rightarrow Z$
is quasi-\'etale unless $Z\cong\mathbb{P}(1^{n-1},2)$ in which case
the branch divisor $G$ is disjoint from Sing$(Z)$, $W\rightarrow E\cong\mathbb{P}^{n-1}$
is a also Gorenstein conic bundle by Lemma \ref{lem:basechange}, and
is indeed a conic bundle away from the preimage of Sing$(Z)$. Away from the branch divisor $G$, the map $E\to Z$ is quasi-\'etale and hence the preimage of $E$ in $W$ splits into two disjoint components that remain Cartier and smooth. It follows that $W$ is smooth along the preimage of $E$ (in particular, $W$ is smooth at generic points of every fiber of $g$ outside $G$) and therefore by Lemma \ref{lem:conicbundle}, $g$ is a conic bundle away from $G$. Hence $g$ is a conic bundle everywhere. The theorem now follows from the same calculation as in the proof of \cite[Lemma 19]{lz16}.
\end{proof}

\begin{proof}[Proof of Theorem \ref{thm:Pn}(2)]
The surface case is just \cite[Example 5.5]{hara} while the higher dimensional case follows from Corollary \ref{cor:=n->F-reg} and Step 4 of the previous proof.
\end{proof}

\section{Weak boundedness} \label{sec:weakbdd}

We now turn to the proof of Theorem \ref{thm:weakbdd_m}. Recall that by Lemma \ref{lem:reduction}, we may assume that the variety $X$ admits a Mori fiber space structure.

\subsection{Fiber type case} \label{sec:fiber type}

We first look at the case when the Mori fiber space $f:X\rightarrow Y$ is of fiber type, which will be assumed for all Mori fiber spaces is this subsection. Consider the following assumptions on a variety $X$ of dimension $n$:

\begin{ass} \label{ass:raylength}
There exists a constant $A=A(n,\epsilon)>0$ that only depends on $n$ and $\epsilon$ such that every $K_X$-\emph{positive} extremal ray of $X$ is generated by a curve $C$ with $(K_X\cdot C)<A$.
\end{ass}

\begin{ass} \label{ass:seshadri_bound}
$\epsilon_{m}(-K_{X})>n-1+\epsilon$.
\end{ass}

Our plan in this section is to show that the set of Mori fiber spaces $f:X\rightarrow Y$ that satisfy these two assumptions is weakly bounded. Using suitable cone theorem for threefold pairs in positive characteristics, we find that Assumption \ref{ass:raylength} is implied by Assumption \ref{ass:seshadri_bound} in dimension 3, thus proving Theorem \ref{thm:weakbdd_m} in the case of Mori fiber spaces.

We first deduce some direct consequences of the Assumptions \ref{ass:raylength} and \ref{ass:seshadri_bound}.

\begin{lem} \label{lem:F and Y}
Let $f:X\rightarrow Y$ be a Mori fiber space such that $X$ satisfies Assumption \ref{ass:seshadri_bound}, then the general fiber $F$ of $f$ is reduced and isomorphic to $\bP^{n-1}$, $\dim Y=1$ and $\rho(X)=2$.
\end{lem}

\begin{proof}
Let $\tF$ be the normalization of the reduced subscheme of $F$. By \cite[Theorem 1.1]{general_fiber}, there exists an effective Weil divisor $D$ on $\tF$ such that $K_{\tF}+D\sim K_X|_{\tF}$. Since $f:X\rightarrow Y$ is a Mori fiber space, $-K_X|_{\tF}$ is ample, hence by Assumption \ref{ass:seshadri_bound} and Lemma \ref{lem:restrict_m}, we have $\epsilon(-K_{\tF}-D)\ge\epsilon_m(-K_X)>n-1\ge \dim \tF$. By Theorem \ref{thm:Pn}, this implies that $\tF\cong\bP^{n-1}$, $\dim Y=1$ and $D=0$ (note that components of $D$ have integral coefficients). But then by \cite[Theorem 7.1, Lemma 7.2]{Badescu}, $F$ itself is reduced and we also have $(K_X+F)|_{\tF}=K_{\tF}$. Since $\tF$ is smooth, $F$ is normal by \cite[Theorem A]{das}, thus $F\cong\tF\cong\bP^{n-1}$. Finally, since $f$ is a Mori fiber space, we get $\rho(X)=\rho(X/Y)+\rho(Y)=2$.
\end{proof}

\begin{lem} \label{lem:f-reg along F}
Let $f:X\rightarrow Y$ be a Mori fiber space with general fiber $F$. Assume that $X$ satisfies Assumption \ref{ass:seshadri_bound}, then there exists $D\sim_\bQ -\frac{1}{1-\epsilon}K_X$ such that $(X,D)$ is strongly $F$-regular along $F$.
\end{lem}

\begin{proof}
This follows from Lemma \ref{lem:mult and test ideal} and the same proof of \cite[Corollary 5.2]{me}.
\end{proof}

\begin{lem} \label{lem:explicit_ample}
Let $f:X\rightarrow Y$ be a Mori fiber space with general fiber $F$. Assume that $X$ satisfies both Assumptions \ref{ass:raylength} and \ref{ass:seshadri_bound}. Then $-K_X+AF$ is ample. 
\end{lem}

\begin{proof}
We may assume that $-K_X$ is not nef. By Lemma \ref{lem:F and Y}, $F$ is smooth divisor in $X$, $Y$ is a curve and $\rho(X)=2$. It follows that the Mori cone $\overline{NE}(X)$ is 2-dimensional, hence by Assumption \ref{ass:raylength}, it is generated by a curve $C$ that dominates $Y$ with $0<(K_X\cdot C)<A$ and a line $l$ in $F$. Let $L=-K_X+AF$, then since $F$ is Cartier we have $(L\cdot C)\ge (-K_X\cdot C)+A>0$ and it is clear that $(L\cdot l)>0$. Hence $L$ lies in the interior of the nef cone and is therefore ample.
\end{proof}

We will also need the following result:

\begin{lem} \label{lem:Qeff}
Let $\phi:X\rightarrow Y$ be a projective morphism onto a curve $Y$ with general fiber $F$. Let $L$ be a $\bQ$-Cartier big divisor on $X$ and let $\lambda<\frac{\vol_X(L)}{n\cdot \vol_F(L|_F)}$ be a rational number where $n=\dim X$, then $L-\lambda F$ is $\mathbb{Q}$-effective.
\end{lem}

\begin{proof}
Let $m$ be a sufficiently divisible positive integer and let $k=m\lambda$.
We may assume $\lambda>0$ and $k\in\mathbb{Z}$. By the exact sequence
\[
0\rightarrow\mathcal{O}_{X}(mL-lF)\rightarrow\mathcal{O}_{X}(mL-(l-1)F)\rightarrow\mathcal{O}_{F}(mL)\rightarrow0
\]
for $l=1,\cdots,k$ we get 
\begin{eqnarray*}
h^{0}(X,\mathcal{O}_{X}(mL-kF)) & \ge & h^{0}(X,\mathcal{O}_{X}(mL))-k\cdot h^{0}(F,\mathcal{O}_{F}(mL)) \\
                                & =   & \left( \frac{\vol_X(L)}{n!}-\lambda\frac{\vol_F(L|_F)}{(n-1)!} \right)\cdot m^{n}+O(m^{n-1})>0, 
\end{eqnarray*}
where the middle equality is the asymptotic Riemann-Roch formula and the last inequality follows from the assumption on $\lambda$. Since $mL-kF=m(L-\lambda F)$, the lemma follows.
\end{proof}

\begin{rem}
When $F$ is normal, the above lemma also holds even if $L$ is not $\bQ$-Cartier since in this case the restriction $L|_F$ is well defined (it is determined by the restriction of $L$ to the smooth locus of $F$). Also note that if in addition $L$ has integral coefficients, then in the above argument $m$ can be taken as any sufficiently large integer such that $m\lambda\in\bZ$. Therefore, if the denominator of $\lambda$ is not divisible by $p$, then we can choose $\Delta\sim_\bQ L-\lambda F$ such that $m\Delta$ has integral coefficient for some integer $m$ that is not divisible by $p$.
\end{rem}

Recall that in characteristic zero, the weak boundedness of varieties with large moving Seshadri constants \cite{me} is proved using the connectedness lemma of Koll\'ar-Shokurov, which is not yet available in positive characteristic. The following weaker version, however, suffices for the purpose of this section.

\begin{lem} \label{lem:weak connectedness}
Let $(X,D)$ be a pair such that $X$ is projective and $-(K_X+D)$ is ample, then $(X,D)$ has at most one {\it good} $F$-pure center.
\end{lem}

Let us elaborate the meaning of {\it good $F$-pure center} here. Let $(X,D)$ be a pair and $Y\subseteq X$ a normal subvariety such that the following conditions hold in a neighbourhood of $Y$ (we refer to \cite{schwede-adjunction} for the definition of center of $F$-purity):
\begin{enumerate}
    \item The Cartier index of $K_X+D$ is not divisible by $p$;
    \item $(X,D)$ is sharply $F$-pure along $Y$ and $Y$ is a center of sharp $F$-purity for $(X,D)$.
\end{enumerate}
Then by the main theorem of \cite{schwede-adjunction}, there exists a canonically determined effective divisor $D_Y$ such that $(K_X+D)|_Y\sim_\bQ K_Y+D_Y$. We say that $Y$ is a good $F$-pure center of $(X,D)$ if $(Y,D_Y)$ is globally $F$-regular.

\begin{proof}
Suppose there are two distinct good $F$-pure centers $W_1$, $W_2$ of the pair $(X,D)$ and we will derive a contradiction. By the main theorem of \cite{schwede-adjunction}, both $W_i$ are minimal among centers of sharp $F$-purity for $(X,D)$, hence by \cite[Lemma 3.5]{schwede-F-center}, $W_1$ is disjoint from $W_2$. Let $W=W_1\cup W_2$ and let $e>0$ be a sufficiently divisible integer. We have the following commutative diagram
\[
\xymatrix{F_{*}^{e}\mathcal{O}_{X}((1-p^{e})(K_{X}+D))\ar[r]^{\psi}\ar[d]^{\mathrm{Tr}_{X}^{e}} & F_{*}^{e}\mathcal{O}_{W}((1-p^{e})(K_{W}+D_{W}))\ar[d]^{\mathrm{Tr}_{W}^{e}}\\
\mathcal{O}_{X}\ar[r]^{\phi} & \mathcal{O}_{W}.
}
\]
Since the $W_i$'s are globally $F$-regular, $H^0(\mathrm{Tr}_{W}^{e})$ is surjective. On the other hand, the cokernel of $H^0(\psi)$ lies in $H^1(X,\mathcal{I}_{W}((1-p^{e})(K_{X}+D))$, which vanishes for $e\gg 0$ since $-(K_X+D)$ is ample. Hence $\phi\circ\mathrm{Tr}_{X}^{e}=\mathrm{Tr}_{W}^{e}\circ\psi$ induces a surjection on $H^0$. In particular, the natural restriction $H^0(X,\mathcal{O}_{X})\rightarrow H^0(W,\mathcal{O}_{W})$ is surjective. But as $W$ contains two connected components, this is a contradiction.
\end{proof}

We now prove that the Assumptions \ref{ass:raylength} and \ref{ass:seshadri_bound} together imply weak boundedness for Mori fiber spaces.

\begin{thm} \label{thm:mfs}
Given $v,\alpha>0$, there exists a constant $M=M(n,A,v,\alpha)$ depending only on $n$, $A$, $v$ and $\alpha$ such that if $f:X\rightarrow Y$ is a Mori fiber space such that $X$ satisfies Assumption \ref{ass:raylength}, $Y$ is a curve and the general fiber $F$ is globally $F$-regular with $\vol(-K_F)<v$ and  $\mathrm{fpt}(F)>\alpha$, then $\vol(-K_X)<M$.
\end{thm}

\begin{proof}
We may assume $\alpha<1$. Let $0<\lambda<(nv)^{-1}\vol(-K_X)$, $0<r<\alpha$ be rational numbers whose denominators are not divisible by $p$. Apply Lemma \ref{lem:Qeff} and its subsequent remark to $L=-K_{X}$ we see that there exists an effective divisor $\Delta\sim_{\mathbb{Q}}-K_{X}-\lambda F$ such that $m\Delta$ has integral coefficients for some $p\nmid m$. Let $D=F_1+F_2+r\Delta$ where $F_1$ and $F_2$ are two distinct general fibers of $f$ . We have $-(K_{X}+D)\sim_{\mathbb{Q}}-(1-r)K_{X}+(r\lambda-2)F$. Suppose that $r\lambda-2\ge A(1-r)$ where $A$ is the constant in Assumption \ref{ass:raylength}, then $-(K_{X}+D)$ is ample by Lemma \ref{lem:explicit_ample}. Perturbing $r$, we may assume that $(1-r)lK_X$ is Cartier for some $p\nmid l$. It follows that the Cartier index of $K_X+D$ is not divisible by $p$. On the other hand, as $(K_X+D)|_{F_i}\sim K_{F_i}+D_{F_i}$ where $D_{F_i}\sim_{\bQ}-rK_{F_i}$ and $r<\mathrm{fpt}(F)$, we see that $(F_i,D_{F_i})$ is globally $F$-regular. By \cite[Theorem A]{das}, $(X,D)$ is purely $F$-regular along $F_i$ and it follows that both $F_i$ are good $F$-pure centers for $(X,D)$, which contradicts Lemma \ref{lem:weak connectedness}. Hence we always have $r\lambda-2<A(1-r)$ and since $\lambda$ (resp. $r$) can be arbitrarily close to $(nv)^{-1}\vol(-K_X)$ (resp. $\alpha$), it follows immediately that $\vol(-K_{X})$ is bounded from above by a constant $M(n,A,v,\alpha)$ depending only on $n$, $A$, $v$ and $\alpha$.
\end{proof}

\begin{cor} \label{cor:mfs}
There exists a constant $M=M(n,\epsilon)$ depending only on $n$ and $\epsilon$ such that if $f:X\rightarrow Y$ is a Mori fiber space such that $X$ satisfies Assumptions \ref{ass:raylength} and \ref{ass:seshadri_bound}, then $\vol(-K_X)<M$.
\end{cor}

\begin{proof}
Let $F$ be the general fiber of $f$. By Lemma \ref{lem:F and Y}, $F\cong\mathbb{P}^{n-1}$ and $Y$ is a curve. By Corollary \ref{cor:fpt-Pn}, the existence of $M$ follows from Theorem \ref{thm:mfs} by taking any $v>n^{n-1}$ and $\alpha<1$. 
\end{proof}

In the remaining part of the section, we assume that $p>5$. We proceed to show that Assumption \ref{ass:seshadri_bound} implies Assumption \ref{ass:raylength} for Mori fiber spaces in dimension at most 3.

\begin{lem} \label{lem:raylength}
Let $(X,D)$ be a pair with $\dim X\le 3$ and $R$ a $(K_{X}+D)$-negative extremal ray. Assume that 
\begin{enumerate}
\item $R$ is generated by a curve; 
\item Every curve generating $R$ is not contained in the non-klt locus of $(X,D)$.
\end{enumerate}
Then $R$ is generated by a rational curve $C$ such that $0<-(K_{X}+D\cdot C)\le2\dim X$.
\end{lem}

\begin{proof}
By \cite[Theorem 1.2]{mmp-birkar}, log minimal model exists for klt pairs in dimension 3 when $p>5$. Therefore by standard argument as in \cite[Corollary 1.4.4]{bchm}, there exists a birational morphism $\pi:X\rightarrow Y$, where $Y$ is $\bQ$-factorial, such that we can write $K_Y+\Gamma_1+\Gamma_2=\pi^*(K_X+D)$ where $(Y,\Gamma_1)$ is klt, $K_Y+\Gamma_1$ is $\pi$-nef and every component of $\Gamma_2$ has coefficient at least one. In particular, $\pi(\Supp(\Gamma_2))=\mathrm{Nklt(X,D)}$.

Since $\pi_*:\overline{NE}(Y)\to \overline{NE}(X)$ is surjective and $R$ is an extremal ray of $\overline{NE}(X)$, $\pi_*^{-1}(R)$ is an extremal face of $\overline{NE}(Y)$. By our first assumption, there exists a curve $C_0\subseteq X$ generating $R$. Take a curve $C'_0\subseteq Y$ such that $\pi(C'_0)=C_0$.
Then $(K_Y + \Gamma_1 + \Gamma_2)\cdot C'_0<0$. On the other hand, $C'_0\not \subseteq \Supp(\Gamma_2)$ by assumption (ii), hence $\Gamma_2\cdot C'_0\geq 0$ and we have $(K_Y + \Gamma_1)\cdot C'_0<0$. This implies that $\pi_*^{-1}(R)\cap \overline{NE}(Y)_{(K_Y+\Gamma_1)< 0}\neq \{0\}$. Since $\pi_*^{-1}(R)$ is an extremal face,  there exists
a $(K_Y+\Gamma_1)$-negative extremal ray $R'\subseteq \pi^{-1}_*(R)$. Since $K_Y+\Gamma_1$ is nef over $X$,  $\pi_*R'\neq \{0\}$, and hence $\pi_*R'=R$. By \cite[Theorem 1.1]{mmp-bw}, $R'$ is generated by a rational curve $C'$ such that $0<-(K_Y+\Gamma_1)\cdot C'\leq 2\dim X$. Hence $R$ is generated by the rational curve $C=\pi(C')$. By assumption (ii) again, we have $C'\not \subseteq \Supp(\Gamma_2)$ and $\Gamma_2\cdot C'\geq 0$, therefore $-(K_X+D)\cdot C\leq -(K_Y+\Gamma_1)\cdot C'\leq 2\dim X$.
\end{proof}

\begin{lem} \label{lem:3-fold-mfs}
Let $f:X\rightarrow Y$ be a Mori fiber space such that $\dim X\le 3$. Assume that $X$ satisfies Assumption \ref{ass:seshadri_bound}. Then $X$ also satisfies Assumption \ref{ass:raylength}.
\end{lem}

\begin{proof}
By Lemma \ref{lem:F and Y}, $\rho(X)=2$. Let $l$ be the class of a line in the general fiber $F$ of $f$ and let $R$ be the other extremal ray of $\overline{NE}(X)$. By Lemma \ref{lem:f-reg along F}, there exists $D\sim_\bQ -\frac{1}{1-\epsilon}K_X$ such that $(X,D)$ is klt along $F$. We may assume that $X$ is not weak Fano, otherwise there is nothing to verify. In particular, $(-K_X\cdot l)>0$ while $(-K_X\cdot R)<0$. Since $K_X+D\sim_\bQ -\frac{\epsilon}{1-\epsilon}K_X$ is $\bQ$-effective and has negative intersection with $R$, we see that $R$ is generated by a curve on $X$ by \cite[Proposition 5.5.2]{keel}. By construction, the non-klt locus of $(X,D)$ is contained in some special fibers of $f$, hence since $R$ has positive intersection with $F$, it satisfies all the assumptions of Lemma \ref{lem:raylength}. It follows that $R$ is generated by a curve $C$ such that
\[
0<\frac{\epsilon}{1-\epsilon}(K_X\cdot C)=-(K_{X}+D\cdot C)\le2\dim X\le 6
\]
and thus $X$ satisfies Assumption \ref{ass:raylength} by taking $A=\frac{6(1-\epsilon)}{\epsilon}$.
\end{proof}

\subsection{Picard number one case} \label{sec:rho=1}

Now we consider the case of terminal threefolds of Picard number one. Of course in this case Theorem \ref{thm:weakbdd_m} is just a special case of  the weak BAB conjecture in positive characteristic. Unfortunately this conjecture is still open even in dimension three, so we need a somewhat different approach. Similar to the fiber type case, our strategy is to prove an appropriate version of the Koll\'ar-Shokurov connectedness principle in positive characteristic and then, under the assumption that $X$ has large anticanonical volume, construct a boundary on $X$ that violates this principle. We start by setting up the framework.

Let $(X,B)$ be a pair and $x\in X$ a closed point such that $(X,B)$ is strongly $F$-regular at $x$. Let $D_1,\cdots,D_r$ be divisors on $X$ whose set theoretic intersection $\cap D_i$ equals $\{x\}$ in a neighbourhood of $x$. Let $\mathbf{D}=(D_1,\cdots,D_r)$ and let $\Delta(\mathbf{D})\subseteq\bR^r$ be the closure of the set of all $r$-tuples $(t_1,\cdots,t_r)\in\bQ^r_{\ge 0}$ such that $(X,B+\sum_{i=1}^r t_i D_i)$ is sharply $F$-pure at $x$. It can be viewed as an analog of the log canonical threshold polytope (see e.g. \cite{lct-polytope}) in positive characteristic. Clearly $\Delta(\mathbf{D})$ is convex. Let $\succ$ be the lexicographic ordering on $\bR^r$, namely, $t\succ t'$ if and only if $t\neq t'$ and the first non-zero entry of $t-t'$ is positive. We may then talk about the {\it dominant vertex} of $\Delta(\mathbf{D})$, defined to be the unique point $v=(a_1,\cdots,a_r)\in\Delta(\mathbf{D})$ such that for all $v'\in\Delta(\mathbf{D})$ we have $v\succeq v'$. Let $\Gamma(\mathbf{D})=\sum_{i=1}^r a_i D_i$. Any divisor $\Gamma$ of this form (i.e. there exists $\mathbf{D}$ as above such that $\Gamma=\Gamma(\mathbf{D})$) will be called an {\it $F$-pure combination} with an isolated center at $x$. Intuitively, one may view $(X,B+\Gamma)$ as an analog of a pair with an isolated log canonical center at $x$.

We can also define successive approximations of $\Gamma(\mathbf{D})$ as follows (c.f. \cite[\S 3]{bpf}). Let $(X,B)$ and $\mathbf{D}$ be as before and $e>0$ a positive integer such that $(p^e-1)(K_X+B)$ has integral coefficients, we define the the associated {\it $F$-threshold functions}  $t_i(e)\,(i=1,\cdots,r)$ inductively by taking $t_{l+1}(e)$ to be the largest integer $m\ge0$ such that the trace map
\[\mathrm{Tr}^{e}:F_{*}^{e}(\mathcal{O}_{X}((1-p^{e})(K_{X}+B)-\sum_{i=1}^l t_i(e)D_i-mD_{l+1}))\rightarrow\mathcal{O}_{X}\]
is locally surjective around $x$. It is then clear that $\frac{1}{p^e-1}(t_1(e),\cdots,t_r(e))\in\Delta(\mathbf{D})$ and their limit as $e\rightarrow\infty$ is exactly the dominant vertex of $\Delta(\mathbf{D})$. Let $W$ be the scheme-theoretic intersection of all the $D_i$, then by construction for all $j=1,\cdots,r$, 
\[\mathrm{Tr}^{e}:F_{*}^{e}(\mathcal{O}_{X}((1-p^{e})(K_{X}+B)-\sum_{i=1}^r t_i(e)D_i-D_j))\rightarrow\mathcal{O}_{X}
\]
is not surjective around $x$, thus
\begin{equation} \label{eq:not_surj}
\mathrm{Tr}^{e}(F_{*}^{e}(\mathcal{O}_{X}((1-p^{e})(K_{X}+B)-\sum_{i=1}^r t_i(e)D_i)\cdot \mathcal{I}_W))\subseteq\mathfrak{m}_x.
\end{equation}

We can now state the connectedness result we will use in this section:

\begin{lem} \label{lem:weak connectedness-2}
Let $(X,B)$ be a pair such that $X$ is projective and $\bQ$-factorial and there exists an integer $e>0$ such that $(p^e-1)B$ has integral coefficients. Let $x$, $y$ be general points on $X$ and let $\Gamma_x$ \emph{(}resp. $\Gamma_y$\emph{)} be an $F$-pure combination with an isolated center at $x$ \emph{(}resp. $y$\emph{)}. Then the divisor $-(K_X+B+\Gamma_x+\Gamma_y)$ is not ample.
\end{lem}

\begin{proof}
Since $x$, $y$ are general we may assume that they're smooth points. Let $\Gamma_x=\Gamma(\mathbf{D}_x)$ where $\mathbf{D}_x=(D_{1,x},\cdots,D_{r,x})$ and let $W_x=\cap_{i=1}^r D_{i,x}$. We have $\Supp(W_x)=\{x\}$. For sufficiently divisible integer $e>0$, let $t_{i,x}(e)$ be the $F$-threshold function associated to $\mathbf{D}_x$ at $x$ and let $\Gamma_x^{(e)}=\sum_{i=1}^r t_{i,x}(e)D_{i,x}$. Similarly we have corresponding objects indexed by $y$. Let $W=W_x\cup W_y$, then by (\ref{eq:not_surj}), we have
\[\mathrm{Tr}^{e}(F_{*}^{e}(\mathcal{O}_{X}((1-p^{e})(K_{X}+B)-\Gamma_x^{(e)}-\Gamma_y^{(e)})\cdot \mathcal{I}_W))\subseteq\mathfrak{m}_x\cdot \mathfrak{m}_y.
\] 
Hence for $L^{(e)}=\mathcal{O}_{X}((1-p^{e})(K_{X}+B)-\Gamma_x^{(e)}-\Gamma_y^{(e)})$ we have the following commutative diagram
\[\xymatrix{
0\ar[r] & F_{*}^{e}(\mathcal{I}_W\cdot L^{(e)})\ar[d]^{\mathrm{Tr}^{e}}\ar[r] & F_{*}^{e}L^{(e)}\ar[d]^{\mathrm{Tr}^{e}}\ar[r] & F_{*}^{e}(L^{(e)}|_W)\ar[d]^{\mathrm{Tr}^{e}}\ar[r] & 0 \\
0\ar[r] & \mathfrak{m}_x\cdot\mathfrak{m}_y\ar[r] & \cO_X\ar[r] & k_x\oplus k_y\ar[r] & 0.
}\]
By construction $\mathrm{Tr}^{e}:F_{*}^{e}L^{(e)}\rightarrow\cO_X$ is locally surjective around $x$ and $y$, thus the induced map $\mathrm{Tr}^{e}:F_{*}^{e}(L^{(e)}|_W)\rightarrow k_x\oplus k_y$ is also surjective. As $\dim W=0$, we get a surjection
\[H^0(W,F_{*}^{e}(L^{(e)}|_W))\twoheadrightarrow k_x\oplus k_y.
\]
On the other hand as $e$ goes to infinity $\frac{1}{p^e-1}\Gamma_x^{(e)}$ tends to $\Gamma_x$, thus if $-(K_X+B+\Gamma_x+\Gamma_y)$ is ample then by Fujita type vanishing we have 
\[H^1(X,F_{*}^{e}(\mathcal{I}_W\cdot L^{(e)}))=H^1(X,\mathcal{I}_W\cdot L^{(e)})=0,
\]
which implies that $H^0(X,F_{*}^{e}L^{(e)})\rightarrow H^0(W,F_{*}^{e}(L^{(e)}|_W))$ is also surjective. As in Lemma \ref{lem:weak connectedness}, we deduce that the natural restriction $H^0(X,\cO_X)\rightarrow k_x\oplus k_y$ is surjective, a contradiction. Hence $-(K_X+B+\Gamma_x+\Gamma_y)$ cannot be ample. 
\end{proof}

To apply Lemma \ref{lem:weak connectedness-2}, we need to find singular divisors whose associated $F$-pure combination has an isolated center. A key technical tool is provided by the next lemma, which allows us to construct new singular divisors out of existing ones. To state the result let us first recall a definition.

\begin{defn}[c.f. \cite{schwede_adjoint}] 
Let $(X,D)$ be a pair. The test module of $(X,D)$ is defined to be
\[\tau(K_X,D)=\sum_{e\ge0}\mathrm{Tr}^e_X(F_{*}^{e}(\omega_{X}(-\left\lceil p^{e}D\right\rceil ))) \subseteq \omega_X.
\]
\end{defn}

It is clear from our discussion in Section \ref{sec:test ideals} that over the smooth locus of $X$, the test module coincides with $\tau(X,D)\cdot\omega_X$ where $\tau(X,D)$ is the test ideal of $(X,D)$.

\begin{lem} \label{lem:global generation}
Let $(X,D)$ be a pair and $L$ a Weil divisor on $X$ such that $L-D$ is ample. Let $x\in X$ be a smooth point such that $\epsilon_F(L-D,x)>1$. Then the sheaf $\tau(K_X,D)\otimes\cO_X(L)$ is globally generated at $x$, where $\tau(K_X,D)$ is the test module of the pair $(X,D)$. 
\end{lem}

Here the notation $\epsilon_F(L,x)$ stands for the Frobenius-Seshadri constants \cite{F-seshadri,takumi} of the divisor $L$ at the smooth point $x$.

\begin{proof}
Let $e\gg 0$ be a sufficiently divisible integer and consider the following commutative diagram
\[\xymatrix@C=1em{
0\ar[r] & F_{*}^{e}(\mathfrak{m}_{x}^{[p^{e}]}\cdot\omega_{X}(-\left\lceil p^{e}D\right\rceil ))\ar[r]\ar[d]^{\mathrm{Tr}^{e}} & F_{*}^{e}(\omega_{X}(-\left\lceil p^{e}D\right\rceil ))\ar[r]\ar[d]^{\mathrm{Tr}^{e}} & F_{*}^{e}(\omega_{X}(-\left\lceil p^{e}D\right\rceil )\otimes\mathcal{O}_{X}/\mathfrak{m}_{x}^{[p^{e}]})\ar[r]\ar[d]^{\mathrm{Tr}^{e}} & 0\\
0\ar[r] & \mathfrak{m}_{x}\cdot\tau(K_X,D)\ar[r] & \tau(K_X,D)\ar[r] & \tau(K_X,D)\otimes k_{x}\ar[r] & 0.
}
\]
Since $x$ is a smooth point and $e\gg 0$, the trace map $\mathrm{Tr}^{e}:F_{*}^{e}(\omega_{X}(-\left\lceil p^{e}D\right\rceil ))\rightarrow\tau(X,D)\omega_{X}$ is locally surjective around $x$, hence after tensoring with $L$, we get another commutative diagram
\begin{equation} \label{eq:diagram}
\xymatrix{
F_{*}^{e}(\omega_{X}(p^{e}L-\left\lceil p^{e}D\right\rceil ))\ar[r]\ar[d] & F_{*}^{e}(\omega_{X}(p^{e}L-\left\lceil p^{e}D\right\rceil )\otimes\mathcal{O}_{X}/\mathfrak{m}_{x}^{[p^{e}]})\ar[d]^{\phi}\\
\tau(K_X,D)\otimes\cO_X(L)\ar[r] & \tau(K_X,D)\otimes\cO_X(L)\otimes k_{x}
}
\end{equation}
whose vertical maps are surjective around $x$. In particular, $\phi$ induces a surjection on global sections since both sheaves in question have zero-dimensional support. On the other hand, since $L-D$ is ample and $\epsilon_F(L-D,x)>1$, there exists $m\in\bZ_{\ge0}$ such that $(p^e-m)(L-D)$ is Cartier, $\omega_X(mL-\lceil mD \rceil )$ is globally generated and 
\[H^{0}(X,\cO_X((p^{e}-m)(L-D)))\rightarrow H^{0}(X,\cO_X((p^{e}-m)(L-D))\otimes\mathcal{O}_{X}/\mathfrak{m}_{x}^{[p^{e}]})
\]
is surjective. Tracing through the diagram, it follows that the two horizontal maps in (\ref{eq:diagram}) also induce surjection on global sections. In particular, $\tau(K_X,D)\otimes\cO_X(L)$ is globally generated at $x$.
\end{proof}

In the remaining part of this section, let $X$ be a $\bQ$-factorial terminal Fano threefold of Picard number 1 such that $\epsilon(-K_X)>2+\epsilon$. Suppose that $\vol(-K_X)$ can be arbitrarily large. Our goal is to derive a contradiction to Lemma \ref{lem:weak connectedness-2}. For this it suffices to find an $F$-pure combination $\Gamma$ with an isolated center at a very general point $x\in X$ such that $\Gamma\sim_\bQ -\lambda K_X$ for some $\lambda<\frac{1}{2}$. Roughly speaking, we will construct three divisors $D_1$, $D_2$, $D_3$ that are very singular at $x$ (so as to make $\Gamma$ small), and the main technical point is to cut down the dimension (at $x$) of their intersection (in general, singular divisors can be quite rigid and hard to deform).

The situation is very much like constructing isolated lc center in characteristic zero and it is always straightforward to come up with the first singular divisor. Let $x\in X$ be a very general point and let $N\in\bZ_{>0}$ be a sufficiently large constant that will be determined later. Suppose that $\vol(-K_X)>N^6$, then there exists an effective $\bQ$-divisor $D_1\sim_\bQ -K_X$ such that $\mult_x D_1>N^2$. Since $X$ has Picard number one, we may assume that $D_1$ is irreducible and write $D_1=t\Delta_1$ where $\Delta_1=\Supp(D_1)$. A priori the multiple $t$ can be large. Our first claim is that $t$ can be bounded in terms of $\epsilon$. More generally, we have the following (note that a variety with terminal singularities is smooth in codimension two).

\begin{lem} \label{lem:multiple bounded}
Let $X$ be a $\bQ$-factorial Fano variety of Picard number one and dimension $n$. Assume that $X$ is smooth in codimension two and $\epsilon(-K_X)>n-1+\epsilon$. Then there exists a constant $a=a(n,\epsilon)$ depending only on $n$ and $\epsilon$ such that for all prime divisor $D\subseteq X$ passing through a general point we have $-K_X\sim_\bQ tD$ for some $t<a$.
\end{lem}

\begin{proof}
Let $\nu:\tD\rightarrow D$ be the normalization of $D$ and let $\Delta$ be the conductor divisor on $\tD$. Since $X$ is smooth in codimension two, by adjunction we have $K_{\tD}+\Delta=\nu^*K_D=\nu^*(K_X+D)$. Let $x$ be a general point on $D$. By Theorem \ref{thm:Pn}, we have $\epsilon(-K_X-D,x)\le n+1$; on the other hand, for any curve $C\subseteq X$ through $x$ that's not contained in $D$, we have $\mult_x C \le (C\cdot D)$, thus if $t>n+2$, then $(-K_X-D\cdot C) \ge (t-1)\mult_x C > (n+1)\mult_x C$ for any such curve $C$. Therefore, by the definition of Seshadri constant it is not hard to see that
\begin{equation} \label{eq:compare_Seshadri}
\epsilon(-K_{\tD}-\Delta,x)=\epsilon(-K_X-D,x)>(1-\frac{1}{t})(n-1+\epsilon).
\end{equation}
So if $(1-\frac{1}{t})(n-1+\epsilon)>n-1$, then by Theorem \ref{thm:Pn} again we get $\tD\cong\bP^{n-1}$, $\Delta=0$ and $\epsilon(-K_{\tD}-\Delta,x)=n$. Substituting back to (\ref{eq:compare_Seshadri}) we see that $\epsilon(-K_X,x)>\epsilon(-K_X-D,x)=n$, which forces $X\cong\bP^n$ and contradicts $t>n+2$. It follows that we have either $t\le n+2$ or $(1-\frac{1}{t})(n-1+\epsilon)\le n-1$. In either case the existence of the constant $a(n,\epsilon)$ is clear.
\end{proof}

The following example shows that the assumptions in the lemma are necessary.

\begin{expl}
Let $X=\bP(1^2,d^{n-1})$ and let $H$ be the ample generator of $\mathrm{Cl}(X)$. Then for any point $x\in X$ there exists a divisor $D\sim H$ containing $x$ but $-K_X\sim (2+d(n-1))D$. Note that $\epsilon(-K_X)=n-1+\frac{2}{d}$.
\end{expl}

In the sequel, fix a constant $a=a(3,\epsilon)$ that satisfies the conclusion of the previous lemma. If $N\gg a$, then by the previous lemma the singularities of $D_1$ mainly come from the singularity of $\Delta_1$, which has codimension at least two. Our next claim is that if $N$ is sufficiently large, then we can find another divisor $D_2\sim_\bQ -K_X$, whose support does not contain $\Delta_1$ (so in particular, $\dim (D_1\cap D_2) \le 1$), such that $\mult_x D_2>bN$ for some (fixed) constant $b>0$. To construct the required divisor $D_2$ (and sometimes even $D_3$), we separate into two cases.

First suppose that $\mult_y D_1<N$ for all $y\neq x$ around $x$. Then by Lemma \ref{lem:mult and test ideal}, the test ideal $\tau(X,\frac{1}{N}D_1)\subseteq \mathfrak{m}_x^{N-2}$ and is trivial in a punctured neighbourhood of $x$. By \cite[Proposition 2.12]{F-seshadri}, \[\epsilon_F(-K_X,x)\ge \frac{1}{3}\epsilon(-K_X,x)>\frac{2}{3}.
\]
Thus if $N\gg 0$ and $L=-2K_X$ then $\epsilon_F(L-\frac{1}{N}D_1,x)>1$. Therefore by Lemma \ref{lem:global generation}, $\tau(K_X,\frac{1}{N}D_1)\otimes\cO_X(L)$ is globally generated at $x$. In particular, as $\tau(K_X,\frac{1}{N}D_1)\otimes\cO_X(L)=\tau(X,\frac{1}{N}D_1)\cO_X(-K_X)$ over the smooth locus of $X$, we get divisors $D_i\sim -K_X$ ($i=1,2,3$) such that locally $D_1\cap D_2 \cap D_3$ is supported at $x$ and $\mult_x D_i\ge N-2$. Let $\mathbf{D}=(D_1,D_2,D_3)$. By Lemma \ref{lem:mult and test ideal}, for any $t\in \Delta(\mathbf{D})$ we have $t_i\le \frac{3}{N-2}$, which gives $\Gamma(\mathbf{D})\le -\frac{6}{N-2}K_X<-\frac{1}{2}K_X$ as desired.

Next assume that there is a curve $C\subseteq X$ containing $x$ such that $\mult_y D_1\ge N$ for all $y\in C$ and by Lemma \ref{lem:multiple bounded}, $\mult_z D_1<a$ if $z\not\in C$ (all statements are local around $x$). Replacing $x$ by a general point of $C$ we may also assume that $C$ is smooth at $x$. Let $0<c<\min\{\frac{1}{2},\frac{1}{a}\}$. By Lemma \ref{lem:mult and test ideal} again, the test ideal $\tau(X,cD_1)\subseteq \mathfrak{m}_y^{\lfloor cN \rfloor -2}$ for all $y\in C$ and is trivial outside of $C$. Let $L=-3K_X$, then as in the previous case we have $\epsilon_F(L-cD_1,x)>\epsilon_F(-2K_X,x)>1$, thus $\tau(K_X,cD_1)\otimes\cO_X(-3K_X)$ is globally generated at $x$. In particular, we see that there exists a constant $b>0$ depending only on $\epsilon$ and a divisor $D_2\sim_\bQ -K_X$ whose support does not contain $\Supp(D_1)$ such that $\mult_y(D_i)>bN$ for all $y\in C\subseteq D_1\cap D_2$. 

Let $0<\delta<\frac{1}{2}$ be any rational number such that $\delta(2+\epsilon)>1$. In particular we get $\epsilon(-\delta K_X,x)>1$. By the definition of Seshadri constants, we have $(-\delta K_X\cdot C)>\mult_x C\ge 1$, hence there exists an effective divisor $D_3\sim_\bQ -K_X$ whose support doesn't contain $C$ such that $\mult_x(D_3|_C)>\delta^{-1}$. As before we may assume that $D_3$ is irreducible. Let $\mathbf{D}=(D_1,D_2,D_3)$. Our last claim is

\begin{lem}
$\Gamma(\mathbf{D})\sim_\bQ -\lambda K_X$ where $\lambda\le \frac{6}{N}+\delta$.
\end{lem}

\begin{proof}
Let $\Delta_i$ be the support of $D_i$ and write $D_i=b_i \Delta_i$ ($i=1,2,3$). Let $v=(a_1,a_2,a_3)$ be the dominant vertex of $\Delta(\mathbf{D})$ and $\nu_i(e)$ the $F$-threshold function at $x$ for the divisors $\Delta_i$ as before. By Lemma \ref{lem:mult and test ideal} it is clear that $a_1,a_2\le \frac{3}{N}$. We claim that $a_3\le \delta$. Let $W$ be the scheme-theoretic intersection of $\Delta_1$ and $\Delta_2$. Since $W$ is supported at $C$ (at least locally around $x$), there exists an integer $r>0$ such that $\cI_C^{[p^r]}\subseteq \cI_W$. For each $e\in \bZ_{>0}$, let $m(e)$ be the smallest integer such that $m(e)\cdot \mult_x(\Delta_3|_C)\ge p^e$. If $(f=0)$ is the local defining equation of $\Delta_3$, then we have
\[f^{m(e)}\subseteq \mathfrak{m}_x^{p^e} + \cI_C = \mathfrak{m}_x^{[p^e]} + \cI_C 
\]
where the second equality holds since $\mathfrak{m}_{C,x}^p=\mathfrak{m}_{C,x}^{[p]}$ on the smooth curve $C$. It follows that
\begin{equation} \label{eq:Delta_3}
f^{p^r m(e)}\subseteq (\mathfrak{m}_x^{[p^e]})^{[p^r]} + \cI_C^{[p^r]} \subseteq \mathfrak{m}_x^{[p^{e+r}]}+ \cI_W.
\end{equation}
It is clear that
\[\mathrm{Tr}^{e}(F_{*}^{e}(\mathcal{O}_{X}((1-p^{e})(K_{X}+B)-\sum_{i=1}^2 t_i(e)\Delta_i)\cdot \mathfrak{m}_x^{[p^e]}))\subseteq\mathfrak{m}_x,
\]
hence combining with (\ref{eq:not_surj}) and (\ref{eq:Delta_3}) we get
\[\mathrm{Tr}^{e+r}(F_{*}^{e+r}(\mathcal{O}_{X}((1-p^{e+r})(K_{X}+B)-\sum_{i=1}^2 t_i(e+r)\Delta_i-p^r m(e))\Delta_3))\subseteq\mathfrak{m}_x.
\]
Therefore, $\nu_3(e+r)\le p^r m(e)$ and as $a_3=\lim_{e\rightarrow\infty} \frac{\nu_3(e+r)}{b_3(p^{e+r}-1)}$, we deduce that 
\[a_3\le \frac{1}{\mult_x (D_3|_C)}<\delta
\]
as claimed. The lemma now follows as by our construction $D_i\sim_\bQ -K_X$ for each $i$.
\end{proof}

Summing up, we eventually have

\begin{thm} \label{thm:rho=1}
The set of $\bQ$-factorial terminal Fano threefolds $X$ of Picard number one such that $\epsilon(-K_X,x)>2+\epsilon$ for some $x$ is weakly bounded.
\end{thm}

\begin{proof}
Let $0<\delta<\frac{1}{2}$ be chosen as before, then by the previous lemma, as $N\gg 0$ there exists $\mathbf{D}=(D_1,D_2,D_3)$ such that $\cap D_i$ is locally supported at a very general point $x$ and $\Gamma(\mathbf{D})\sim_\bQ -\lambda K_X$ for some $\lambda<\frac{1}{2}$. This contradicts Lemma \ref{lem:weak connectedness-2}.
\end{proof}

Finally we finish the proof of Theorem \ref{thm:weakbdd_m} and its corollaries.

\begin{proof}[Proof of Theorem \ref{thm:weakbdd_m}]
This follows from Lemma \ref{lem:reduction}, Corollary \ref{cor:mfs}, Lemma \ref{lem:3-fold-mfs} and Theorem \ref{thm:rho=1}.
\end{proof}

\begin{proof}[Proof of Theorem \ref{thm:weakbdd}]
By \cite[Theorem 6.4]{demailly} we have $\epsilon(-K_X,x)=\epsilon_m(-K_X,x)$ for all smooth point $x\in X$, so the result follows immediately from Theorem \ref{thm:weakbdd_m}.
\end{proof}

\begin{proof}[Proof of Corollary \ref{cor:birbdd}]
By \cite[Theorem 1.1]{F-seshadri}, $|-2K_X|$ induces a birational map, so the corollary follows from Theorem \ref{thm:weakbdd} and \cite[Lemma 2.4.2]{hmx}.
\end{proof}


\bibliography{ref}
\bibliographystyle{alpha}

\end{document}